    \theoremstyle{plain}
    \newtheorem{thm}{Theorem}[section]
    \newtheorem{prop}[thm]{Proposition}
    \newtheorem{lem}[thm]{Lemma}
    \theoremstyle{definition}
    \newtheorem{defn}[thm]{Definition}
    \newtheorem{rem}[thm]{Remark}
    \def \B{\mathscr{B}}
    \def \L{\mathscr{L}}
    \def \R{\mathbb{R}}
    \def \Z{\mathbb{Z}}
    \def \sph{\mathbb{S}}
    \newcommand {\sign} {\mathop \mathrm{sign}} 
    \def \ae{\text{a.e.\ }}
    \def \torus{{[0,\Lambda)^d}}
    \def \numphases{P}
    \newcommand {\supp} {\mathop \textup{supp}}
    \def \chara{\mathbf{1}}
    \def \eps{\varepsilon}
    \def \exc{\mathcal{E}}
    \DeclareMathOperator*{\esssup}{ess\,sup}
    \title{Convergence of the Allen-Cahn Equation to multi-phase mean-curvature flow}
    \author{Tim Laux\footnote{Max-Planck-Institut f\"ur Mathematik in den Naturwissenschaften, Inselstra{\ss}e 22, 04103 Leipzig, Germany. Please use {tim.laux@mis.mpg.de} for correspondence.  } 
    \and Thilo Simon\footnotemark[1]}
    \date{\today}
\begin{document}

      \maketitle
      \begin{abstract}
      We present a convergence result for solutions of the vector-valued Allen-Cahn Equation. In the spirit of the work of Luckhaus and Sturzenhecker we establish convergence towards a distributional formulation 
      of multi-phase mean-curvature flow using sets of finite perimeter. 
      Like their result, ours relies on the assumption that the time-integrated energies of the approximations converge to those of the limit.
      Furthermore, we apply our proof to two variants of the equation, incorporating external forces and a volume constraint.
    %

    \medskip

    \noindent \textbf{Keywords:} Mean curvature flow, Allen-Cahn Equation

    \medskip

    \noindent \textbf{Mathematical Subject Classification:} 35A15, 35K57, 35K93, 74N20
    \end{abstract}

    \section*{Introduction}
    Motion by mean curvature is an important geometric evolution equation and arises in various problems in geometry, physics and the sciences. 
    Its multi-phase version for example is a popular model for the evolution of grain boundaries in polycrystals undergoing heat treatment, 
    already motivated in \cite{mullins1956two}.
    The Allen-Cahn Equation
    \begin{align}\label{allen cahn}
	\partial_t u_\eps = \Delta u_\eps -\frac{1}{\eps^2} \partial_u W (u_\eps)
      \end{align}
      is a well-established phase-field approximation for (multi-phase) mean-curvature flow \cite{allen1979microscopic}, 
    replacing sharp interfaces by diffused transition layers.

    \medskip

    The derivation of motion by mean curvature as the singular limit of the Allen-Cahn Equation has a 
    long history and is well-understood in the two-phase case: The first formal asymptotic expansions were constructed by Rubinstein, Sternberg and Keller 
    \cite{rubinstein1989fast}.
    Convergence for a smooth evolution was proved independently by De Mottoni and Schatzman \cite{de1995geometrical} and Chen \cite{chen1992generation}.
    Bronsard and Kohn \cite{bronsard1991motion} used the gradient flow structure of \eqref{allen cahn} 
    to prove compactness, and, in the radially symmetric case, convergence to motion by mean curvature. 
    For the long-time behavior past singularities the following two well-established notions of weak solutions have 
    proven to be useful for understanding the singular limit of \eqref{allen cahn}: viscosity solutions \cite{evans1991motion,chen1991uniqueness}
    and Brakke's varifold-solutions \cite{brakke1978motion}.

    Viscosity solutions on the one hand are based on the level-set formulation \cite{osher1988fronts} and the well-known geometric comparison principle of two-phase
    mean-curvature flow. Evans, Soner and Souganidis \cite{evans1992phase} rigorously proved the convergence
    towards the viscosity solution -- at least if the level-set of the viscosity solution does not develop an interior but remains ``thin''. Barles, Soner and Souganidis
    \cite{barles1993front} showed in particular that this holds true for mean-convex or star shaped initial conditions. 

    Brakke's varifold-solutions \cite{brakke1978motion} on the other hand are based on the gradient flow structure of mean-curvature flow and are defined by the optimal dissipation of energy along the
    solution. 
    Ilmanen proved convergence towards Brakke's formulation
    \cite{ilmanen1993convergence} in the two-phase case by translating Huisken's celebrated monotonicity formula \cite{huisken1990asymptotic} to the phase-field framework of \eqref{allen cahn}.

    While the question of convergence of the Allen-Cahn Equation \eqref{allen cahn} seems to be almost settled in the two-phase case, little is known in the multi-phase case.
    Even the work of Ilmanen \cite{ilmanen1993convergence} seems not to apply since he makes use of comparison techniques at a crucial point. 
    Bronsard and Reitich \cite{bronsard1993three} carried out a formal asymptotic expansion at a triple junction and proved short-time existence. 
    However, to the best of our knowledge,  rigorous long-time convergence results past singularities have not been available
    prior to the present work.

    \medskip
    
    In comparison to its two-phase counterpart, \emph{multi-phase} mean-curvature flow is still poorly understood.
    The analytic study of the planar case started with the work of Mantegazza, Novaga and Tortorelli \cite{mantegazza2003motion} 
    who studied the evolution of a single triple junction.
    Recently Mantegazza, Novaga, Pluda and Schulze \cite{mantegazzaevolution} extended these results to the case of two triple junctions.
    Ilmanen, Neves and Schulze \cite{ilmanen2014short} proved short-time existence even when starting from certain non-regular networks, which should 
    allow to continue the flow through all generic/stable singularities that form during the evolution of a planar network.
    Only recently, global weak solutions were constructed in the substantial work of Kim and Tonegawa \cite{kim2015mean}.
    They proved convergence of a variant of Brakke's original scheme towards a \emph{non-trivial} Brakke flow.
    Uniqueness of the evolution is still unclear but is expected in generic situations.
    
    \medskip

    Our proof is of variational nature in the sense that it is based on the gradient flow structure of the Allen-Cahn Equation and mean-curvature
    flow. In particular, we use some techniques known from the analytical study of the static
    analogue of \eqref{allen cahn}, initiated by the work of Modica and Mortola \cite{modica1977esempio}. Modica \cite{modica1987gradient}
    and Sternberg \cite{sternberg1988effect} provided the convergence of the Ginzburg-Landau Energy (see \eqref{E eps} for a definition in the multi-phase case) 
    towards a multiple of the perimeter functional in the sense of $\Gamma$-convergence. 
    Kohn and Sternberg \cite{kohn1989local} were able to construct \emph{local} minimizers of the Ginzburg-Landau Energy \eqref{E eps} based on the above $\Gamma$-convergence.
    Furthermore, it turns out that the convergence of the Ginzburg-Landau Energy towards the perimeter functional is even stronger:
    Luckhaus and Modica \cite{luckhaus1989gibbs} proved that also
    the first variations of the energies converge towards the mean curvature -- the first variation of the perimeter functional -- 
    by the clever use of a classical argument of Reshetnyak \cite{reshetnyak1968weak}.
    A year later, Baldo \cite{baldo1990minimal} extended the $\Gamma$-convergence of the energies \eqref{E eps} to the multi-phase case.

    \medskip
    
    However, the $\Gamma$-convergence of the energies does not imply the convergence of the according gradient flows. 
    Since every gradient flow comes with a metric, it is evident that one needs conditions on both, the metric tensor and the energy, to verify the convergence.
    Sandier and Serfaty \cite{sandier2004gamma} provided sufficient conditions for this convergence.
    Serfaty \cite{serfaty2011gamma} has already mentioned that these assumptions are guaranteed by the
    works of R\"oger and Sch\"atzle \cite{roger2006modified} on the Willmore functional and
    Mugnai and R\"oger \cite{mugnai2008allen} on the action functional of the Allen-Cahn Equation.
    This result is restricted to two-phase mean-curvature flow in dimensions $d\leq 3$ though.
    
    \medskip
    
    From a conceptional viewpoint, our proof is closely related to a number of other convergence proofs for implicit time-discretizations
    in the spirit of De Giorgi's \emph{minimizing movements} \cite{de1993new}.
    Luckhaus and Sturzenhecker \cite{LucStu95} established the convergence of the time-discretization
    proposed by Almgren, Taylor and Wang \cite{ATW93}; and Luckhaus and Sturzenhecker \cite{LucStu95} towards a 
    distributional solution of mean-curvature flow, see \eqref{H=v} and \eqref{v=dtX} for a multi-phase version of this formulation.
    Recently, Otto and the first author \cite{laux2015convergence} proved convergence of the thresholding scheme of Merriman, Bence and Osher \cite{MBO92,MBO94}
    in the multi-phase case based on the minimizing movements interpretation of Esedo\u{g}lu and Otto \cite{EseOtt14}.
    Over the last decades, this variational viewpoint has proven to be flexible enough to study a tremendous amount of problems such as
    the  Stefan Problem \cite{LucStu95} and its anisotropic variant \cite{garcke2011existence}, the
    Mullins-Sekerka Flow \cite{roger2005existence} and its multi-phase variant \cite{bronsard1998multi},
    volume-preserving mean-curvature flow \cite{mugnai2015global,LauSwa15}, 
    the evolution of martensitic phase transitions \cite{dondl2010sharp}, and many more.

    \medskip
    
    Our main result, Theorem \ref{thm AC2MCF}, establishes the convergence of solutions of \eqref{allen cahn} 
    for a general class of potentials and any space dimension.
    Like the results of Luckhaus and Sturzenhecker \cite{LucStu95}, and Felix Otto and the first author \cite{laux2015convergence},
    also ours is only a \emph{conditional} convergence result in the sense that we assume the time-integrated energy of the approximations
    to converge to the time-integrated  energy of the limit, see \eqref{conv_ass}.
    Although this is a very natural assumption, it is not guaranteed by the a priori estimates coming from the energy-dissipation equality \eqref{energy-dissipation equality}.
    However, the verification of this assumption is non-trivial and even fails for certain initial data, cf.\ \cite{bronsard1996existence} for an example of higher multiplicity
    interfaces in the limit of the volume-preserving Allen-Cahn Equation.
    
    The main idea of our proof is to multiply the Allen-Cahn Equation
    $
	\partial_t u_\eps = \Delta u_\eps -\frac{1}{\eps^2} \partial_u W (u_\eps)
      $
    with $\eps \left( \xi \cdot \nabla \right) u_\eps$, integrate in space and time  and pass to the limit $\eps\downarrow0$.
    To this end we extend the above mentioned argument of Luckhaus and Modica \cite{luckhaus1989gibbs} to the  multi-phase case and obtain
    the curvature-term $\int_\Sigma H\, \xi \cdot \nu$ from the right-hand side.
    The more delicate part, and the core of this paper, is how to pass to the limit in the velocity-term $\int_\Sigma V\, \xi \cdot \nu$.
    The difficulty is that one has to pass to the limit in a \emph{product} of weakly converging terms, the normal and the velocity.
    We overcome this difficulty by ``freezing'' the normal and introducing an appropriate approximation \eqref{exc eps} of the tilt-excess.
    After doing so it turns out that the new nonlinearity with the frozen normal can be written  as a derivative of a compact quantity. 
    The technique of freezing the normal was used before in \cite{laux2015convergence}, where the authors introduce an approximation of the energy-excess.

    To work with the tilt-excess instead of the energy-excess seems very natural to us in this particular problem and might be interesting 
    in other cases too. The only extra difficulty is that one has to pass to the limit in the nonlinear quantity \eqref{exc eps}.
    However, our problem seems to be much simpler than the one in \cite{laux2015convergence} as we do not have to work on multiple time scales.

    \medskip
    
    The structure of the paper is as follows. In Section \ref{sec:result} we introduce the notation and state our main result, Theorem \ref{thm AC2MCF}.
    In Section \ref{sec:comp} we prove compactness of the solutions together with bounds on the normal velocities.
    We took care to be precise in this section but do not claim the originality of the results.
    We use a general chain rule of Ambrosio and Dal Maso \cite{ambrosio1990general} to identify the nonlinearities in the multi-phase 
    case as derivatives. Furthermore, we repeat the application of De Giorgi's structure result from \cite{laux2015convergence} to handle the excess.
    In Section \ref{sec:conv} we pass to the limit in the equation.
    Since this is the most original part, we give a short overview over the idea of the proof first.
    We then present our extension of the Reshetnyak argument by Luckhaus and Modica \cite{luckhaus1989gibbs} in Proposition \ref{multi luckhaus modica} to handle the curvature-term and
    prove the convergence of the velocity-term in Proposition \ref{multi prop dt nu}, which is the main novelty and the core of the paper.
    We conclude the section with the proof of the main result, Theorem \ref{thm AC2MCF}.
    In Section \ref{sec:forces,volume} we apply our method to the cases when external forces are present or a volume-constraint is active, see Theorems \ref{thm FAC2FMCF} and \ref{thm VPAC2VPMCF}. 

     \section{Main results}\label{sec:result}
      The Allen-Cahn Equation \eqref{allen cahn} describes a system of fast reaction and slow diffusion and is the (by the factor $\frac1\eps$ accelerated) 
      $L^2$-gradient flow of the Ginzburg-Landau Energy
      \begin{align}\label{E eps}
	E_\eps(u_\eps) = \int \frac\eps2 \left|\nabla u_\eps\right|^2 + \frac{1}{\eps} W(u_\eps)\,dx.
      \end{align}
      For convenience we will work with periodic boundary conditions for $u$, i.e.\ on the flat torus $\torus$ for some $\Lambda >0$ and write $\int\,dx$ 
      short for $\int_\torus \,dx$.
      
      Here the (unknown) order parameter $u_\eps \colon \R^d \to \R^N$ is vector-valued and $W\colon \R^N \to [0,\infty)$
      is a smooth multi-well potential with finitely many zeros at $u=\alpha_1, \dots ,\alpha_\numphases \in \R^N$.
      We will furthermore impose polynomial growth and convexity of $W$ at infinity:
      \begin{enumerate}
       \item There exist constants $0<c<C < \infty$, $R<\infty$ and an exponent $p\geq2$ such that
	  \begin{equation}\label{growth} 	
	    c |u|^p \leq W(u) \leq C|u|^p \quad \text{for }  |u| \geq R
	  \end{equation}
	  and
	  \begin{equation}\label{growth_derivative}
	    |\partial_u W(u)| \leq C|u|^{p-1}\quad \text{for }  |u| \geq R.
	  \end{equation}
	\item There exist smooth functions $W_{conv}$, $W_{pert} : \R^N \to [0,\infty)$ such that
	  \begin{equation}\label{decomposition_convex}
	   W = W_{conv} + W_{pert}.
	  \end{equation}
	  Here, the function $W_{conv}$ is convex and $W_{pert}$ has at most quadratic growth in the sense that there exists a constant $\tilde C$ such that we have
	  \begin{equation}\label{growth_pertubation}
	    \partial_u^2 W_{pert}(u) \leq \tilde C.
	  \end{equation}
      \end{enumerate}
      These assumptions seem to be very natural to us:
      The classical two-well potential $W(u) = (u^2-1)^2$ for $u \in \R$ clearly has these properties and they are compatible with polynomial potentials also in the case of systems.
           
      By now it is a classical result due to Baldo \cite{baldo1990minimal} that these energies $\Gamma$-converge w.r.t.\ the $L^1$-topology to an 
      \emph{optimal partition energy} given by
      \begin{align}\label{E}
	E(\chi) = \frac12\sum_{1\leq i,j \leq P}\sigma_{ij} \int \frac{1}{2}\left(|\nabla \chi_i|+ |\nabla \chi_j| - |\nabla (\chi_i + \chi_j)|\right),
      \end{align}
      for a partition $\chi_1,\dots,\chi_\numphases \colon \torus \to \{0,1\}$ satisfying the compatibility condition $\sum_{1\leq i\leq P} \chi_i = 1$ a.e.
      Note that for $\chi_i = \chara_{\Omega_i}$ we can also rewrite the limiting energy in terms of the interfaces
      $\Sigma_{ij} := \partial_\ast \Omega_i \cap \partial_\ast \Omega_j$ between the phases, where $\partial_\ast$ denotes the reduced boundary:
      \[E(\chi)= \frac12\sum_{1\leq i,j \leq P}\sigma_{ij} \mathcal{H}^{d-1}\left(\Sigma_{ij}\right).\]
      The link between $u_\eps$ and $\chi$ is given by 
      \[
      u_\eps \to u := \sum_{1\leq j\leq P} \chi_i \alpha_i.
      \]
      
      The constants $\sigma_{ij}$ are the geodesic distances with respect to the metric $2 W(u)\langle \cdot , \cdot \rangle$, i.e.\
      \[
	\sigma_{ij} = d_W(\alpha_i,\alpha_j),
      \]
      where the geodesic distance is defined as
      \begin{equation}\label{geodesic distance}
      d_W(u,v):= \inf \left\{\int_0^1 \sqrt{2W(\gamma)}|\dot \gamma| ds : \gamma \colon [0,1] \to \R^n \text{ a } C^1\text{ curve with } \gamma(0) = u, \gamma(1)=v \right\}.
      \end{equation}
      The surface tensions satisfy the triangle inequality
      \[
	\sigma_{ij} \leq \sigma_{ik} + \sigma_{kj} \quad \text{for all } i,j,k
      \]
      and clearly 
      \[
      \sigma_{ii} =0, \quad \sigma_{ij}>0 \quad\text{for }i\neq j, \quad  \text{and} \quad \sigma_{ij} = \sigma_{ji}. 
      \]
      
      It is an interesting and non-trivial question to find an appropriate potential $W$ which generates given surface tensions $\sigma$.
      In a recent paper, such potentials with multiple wells have been constructed by Bretin and Masnou \cite{bretin2015new} for a related class of energies.
%
%
      We will want to localize both the Ginzburg-Landau Energy and the optimal partition energy.
      Given $\eta \in C(\torus)$ let
      \begin{align*}
      E_\eps(\eta,u_\eps) & := \int \eta \left( \frac{\eps}{2} \left|\nabla u \right|^2 + \frac1\eps W(u)\right)\,dx,\\
      E(\eta ,\chi) & := E(\eta ,u) : = \frac12\sum_{1\leq i,j \leq P}\sigma_{ij} \int \eta \frac{1}{2}\left(|\nabla \chi_i|+ |\nabla \chi_j| - |\nabla (\chi_i + \chi_j)|\right).
      \end{align*}
    
      For our result we will impose
      \begin{equation}\label{conv_ass}
	\int_0^T E_\eps (u_\eps) \,dt \to \int_0^T E(\chi)\,dt
      \end{equation}
      ruling out a certain loss of surface area in the limit $\eps \downarrow 0$.
      Under this assumption we will establish convergence towards the following distributional formulation of mean-curvature flow, see \cite{LucStu95,laux2015convergence}.
      
      \begin{defn}[Motion by mean curvature]\label{def_motion_by_mean_curvature}
	Fix some finite time horizon $T<\infty$, a $P\times P$-matrix of surface tensions $\sigma$ as above and initial data 
	$\chi^0\colon \torus \to \{0,1\}^P$ with $E_0 := E(\chi^0) <\infty$ and $\sum_{1\leq i\leq P} \chi^0_i =1$.
	We say that
	\begin{align*}
	  \chi \in C\Big([0,T];L^2(\torus;\{0,1\}^P)\Big)
	\end{align*}
	with $ \sup_t E(\chi) < \infty$ and $\sum_{1\leq i\leq P} \chi_i =1$
	\emph{moves by mean curvature} if there exist densities $V_i$ with
	\begin{equation}
	  \int_0^T \int V_i^2 \left| \nabla \chi_i \right| dt < \infty
	\end{equation}
	satisfying the following properties:
	\begin{enumerate}
	  \item For all $\xi\in C^\infty_0((0,T)\times \torus, \R^d)$
	    \begin{align}\label{H=v}
	      \sum_{1\leq i,j\leq P} \sigma_{ij} \int_0^T \int \left(\nabla\cdot\xi - \nu_i \cdot \nabla \xi \,\nu_i
	      -  V_i\, \xi \cdot \nu_i  \right) 
	      \frac{1}{2}\left(|\nabla \chi_i|+ |\nabla \chi_j| - |\nabla (\chi_i + \chi_j)|\right) dt= 0,
	    \end{align}
	      where $\nu_i$ is the inner normal of $\chi_i$, i.e.\  the density of $\nabla \chi_i$ with respect to $|\nabla \chi_i|$.
	  \item The functions $V_i$ are the normal velocities of the interfaces in the sense that
	  \begin{equation}\label{v=dtX}
	    \partial_t \chi_i = V_i|\nabla \chi_i | dt \text{\,  distributionally in }(0,T)\times\torus.
	  \end{equation}
	  
	  \item The initial data is achieved in the space $C([0,T];L^2(\torus))$, i.e.\
	  \[\chi_i(0) = \chi_i^0\]
	  in $L^2(\torus)$ for all $1\leq i\leq P$.
	\end{enumerate}
      \end{defn}

      If the evolution is smooth one can integrate by parts and obtain the classical formulation of multi-phase mean-curvature flow
      consisting of the evolution law
      \[
       V_{ij} = H_{ij}\quad \text{on } \Sigma_{ij}
      \]
      together with Herring's well-known angle condition
      \[
       \sum_{i,j} \sigma_{ij} \nu_{ij} =0 \quad \text{at triple junctions}.
      \]
      Comparing to the more general evolution law $V_{ij} = \sigma_{ij} \mu_{ij} H_{ij}$ we see that
      in our case the mobility $\mu_{ij}$ of the interface $\Sigma_{ij}$ is given by $\mu_{ij}=\frac1{\sigma_{ij}}$.
      How to generate general mobilities seems not to be settled yet.
                  
      Our main result is the following theorem.
      \begin{thm}\label{thm AC2MCF}
      Let $W$ satisfy the growth conditions \eqref{growth} and \eqref{growth_derivative}, as well as the convexity at infinity \eqref{decomposition_convex}.
      Let $T<\infty$ be an arbitrary finite time horizon.
      Given a sequence of initial data $u_\eps^{0} \colon \torus \to \R^N$ approximating a partition $\chi^0$, in the sense that
      \begin{equation}\label{initial data}
	  u^{0}_\eps \to \sum_{1\leq i \leq P} \chi_i \alpha_i \quad \text{a.e.\ and}\quad   E_0:= E(\chi^0) = \lim_{\eps\downarrow 0} E_\eps(u^{0}_\eps)<\infty,
      \end{equation}
      there exists a subsequence $\eps\downarrow 0$ such that
      the solutions $u_\eps$ of \eqref{allen cahn} with initial datum $u_\eps^0$ converge to a time-dependent partition $\chi \in C([0,T];L^2(\torus;\{0,1\}^P))$.
      If the convergence assumption \eqref{conv_ass} holds, then $\chi$ moves by mean curvature according to 
      Definition \ref{def_motion_by_mean_curvature}.
      \end{thm}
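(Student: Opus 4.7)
The plan follows the variational strategy sketched in the introduction. The three main ingredients are compactness for the sequence $u_\eps$, passage to the limit in the test identity obtained by multiplying \eqref{allen cahn} by $\eps(\xi\cdot\nabla)u_\eps$, and identification of the resulting terms as the curvature and velocity contributions of Definition \ref{def_motion_by_mean_curvature}.

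\emph{Step 1 (compactness and velocity bounds).} The energy-dissipation identity
\begin{equation*}
E_\eps(u_\eps(t)) + \int_0^t\!\!\int \eps\,|\partial_s u_\eps|^2\,dx\,ds = E_\eps(u_\eps^0)
\end{equation*}
together with \eqref{initial data} gives uniform bounds on $\sup_t E_\eps(u_\eps)$ and on $\sqrt\eps\,\partial_t u_\eps$ in $L^2_{t,x}$. Combining Baldo's $\Gamma$-convergence with a standard Modica-Mortola trick produces a subsequence and a partition $\chi\in C([0,T];L^2(\torus;\{0,1\}^P))$ with $u_\eps\to\sum_i\chi_i\alpha_i$ a.e.\ and $\sup_t E(\chi)<\infty$. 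Testing against smooth time-dependent functions, applying Cauchy-Schwarz, and using the identification of the diffuse surface measures in the limit then yields, under the convergence assumption \eqref{conv_ass}, normal velocities $V_i\in L^2(|\nabla\chi_i|\,dt)$ satisfying \eqref{v=dtX}. This is the content of Section \ref{sec:comp}.

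\emph{Step 2 (test identity and curvature term).} Multiplying \eqref{allen cahn} by $\eps(\xi\cdot\nabla)u_\eps$ for $\xi\in C^\infty_0((0,T)\times\torus;\R^d)$, integrating, and using $\frac1\eps\partial_u W(u_\eps)\cdot\nabla u_\eps=\frac1\eps\nabla W(u_\eps)$ followed by integration by parts gives
\begin{equation*}
\int_0^T\!\!\int \eps\,\partial_t u_\eps\cdot(\xi\cdot\nabla)u_\eps\,dx\,dt = \int_0^T\!\!\int\Bigl[\nabla\cdot\xi\Bigl(\tfrac{\eps}{2}|\nabla u_\eps|^2+\tfrac1\eps W(u_\eps)\Bigr) - \eps\,\partial_j\xi^k\,\partial_j u_\eps\cdot\partial_k u_\eps\Bigr]\,dx\,dt.
\end{equation*}
The right-hand side is the stress-energy form of the curvature term; its limit is Proposition \ref{multi luckhaus modica}, a multi-phase extension of the Luckhaus-Modica theorem. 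Under \eqref{conv_ass}, a Reshetnyak-type argument — relying on the Ambrosio-Dal Maso chain rule to rewrite the relevant nonlinearities of $u_\eps$ as derivatives of scalar compact quantities — identifies the limit as the curvature contribution $\sum_{i,j}\sigma_{ij}\int_0^T\int (\nabla\cdot\xi-\nu_i\cdot\nabla\xi\,\nu_i)\,\tfrac12(|\nabla\chi_i|+|\nabla\chi_j|-|\nabla(\chi_i+\chi_j)|)\,dt$ appearing in \eqref{H=v}.

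\emph{Step 3 (velocity term; the main obstacle).} The hard part is the left-hand side $\int_0^T\int \eps\,\partial_t u_\eps\cdot(\xi\cdot\nabla)u_\eps\,dx\,dt$, a product of the weakly converging sequences $\sqrt\eps\,\partial_t u_\eps$ and $\sqrt\eps\,\nabla u_\eps$. I would implement the freezing-the-normal strategy: for a small scale parameter, approximate the limit normal $\nu_i$ by a piecewise constant vector field $\tilde\nu$ on a fine space-time grid and split
\begin{equation*}
\eps\,\partial_t u_\eps\cdot(\xi\cdot\nabla)u_\eps = (\xi\cdot\tilde\nu)\,\eps\,\partial_t u_\eps\cdot(\tilde\nu\cdot\nabla)u_\eps + \eps\,\partial_t u_\eps\cdot\bigl(\xi\cdot\nabla u_\eps-(\xi\cdot\tilde\nu)(\tilde\nu\cdot\nabla)u_\eps\bigr).
\end{equation*}
On each cell where $\tilde\nu$ is constant, the first term can be recast, using equipartition together with the Ambrosio-Dal Maso chain rule, as $(\xi\cdot\tilde\nu)$ times the time derivative of a compact scalar quantity, and hence passes to the limit to reproduce $\sum_{i,j}\sigma_{ij}\int V_i\,(\xi\cdot\tilde\nu)\,|\nabla\chi_i|\,dt$ on that cell. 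The error term is controlled by Cauchy-Schwarz via the dissipation bound times a tilt-excess of the form $\int\eps\,|\nabla u_\eps-(\tilde\nu\cdot\nabla u_\eps)\tilde\nu|^2$ — the appropriate variant of \eqref{exc eps} — which vanishes as the grid on which $\tilde\nu$ is defined is refined, again as a consequence of the convergence of the diffuse surface measures under \eqref{conv_ass}. Sending $\tilde\nu\to\nu_i$ in $L^2$ of the interface measure then delivers Proposition \ref{multi prop dt nu} and with it the velocity contribution to \eqref{H=v}. Combining Steps 2 and 3 with the continuity of $\chi$ in $L^2$ from Step 1 completes the proof of Theorem \ref{thm AC2MCF}.
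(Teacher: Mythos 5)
Your overall architecture matches the paper's: compactness and construction of the normal velocities from the energy-dissipation identity (Section~\ref{sec:comp}), passage to the limit in the weak form obtained by testing \eqref{allen cahn} with $\eps(\xi\cdot\nabla)u_\eps$, the Luckhaus--Modica/Reshetnyak argument for the curvature term (Proposition~\ref{multi luckhaus modica}), and a freeze-the-normal strategy with a tilt-excess for the velocity term (Proposition~\ref{multi prop dt nu}). Steps~1 and~2 are essentially correct and match the paper.

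There is, however, a genuine gap in Step~3: your tilt-excess $\int\eps\,|\nabla u_\eps-(\tilde\nu\cdot\nabla u_\eps)\tilde\nu|^2$ controls only the \emph{spatial} misalignment of $\nabla u_\eps$ with $\tilde\nu$, i.e.\ the component of $\nabla u_\eps$ orthogonal to $\tilde\nu$. After subtracting the error, the main term is $(\xi\cdot\tilde\nu)\,\eps\,\partial_t u_\eps\cdot(\tilde\nu\cdot\nabla)u_\eps$, which is \emph{still} a product of the two weakly converging quantities $\sqrt\eps\,\partial_t u_\eps$ and $\sqrt\eps\,(\tilde\nu\cdot\nabla)u_\eps$; nothing in your excess or in equipartition makes it the time-derivative of a compact quantity. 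What makes the argument close is the identity $\partial_u\phi_i(u_\eps)\cdot\partial_t u_\eps=\partial_t(\phi_i\circ u_\eps)$ coming from the Ambrosio--Dal Maso chain rule (Lemma~\ref{chain rule}), but to invoke it one must replace $\eps(\tilde\nu\cdot\nabla)u_\eps$ by $-\partial_u\phi_i(u_\eps)$, i.e.\ fix the \emph{state-space} direction as well as the physical one. That second replacement is not controlled by your excess, nor by equipartition (which only controls the balance $\eps|\nabla u_\eps|^2\approx\frac2\eps W(u_\eps)$, not the alignment of $(\tilde\nu\cdot\nabla)u_\eps$ with $\partial_u\phi_i(u_\eps)$). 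The paper resolves precisely this multi-phase subtlety by building both alignments into a single excess,
\begin{equation*}
\exc^i_\eps(\nu^*;\eta,u_\eps)=\int_0^T\!\!\int \eta\,\frac1\eps\,\bigl|\eps\nabla u_\eps+\partial_u\phi_i(u_\eps)\otimes\nu^*\bigr|^2\,dx\,dt,
\end{equation*}
which simultaneously freezes the spatial direction at $\nu^*$ and pins the state-space direction at $-\partial_u\phi_i(u_\eps)$; then one single Cauchy--Schwarz/Young step (Step~1 of the proof of Proposition~\ref{multi prop dt nu}) replaces $\eps(\xi\cdot\nabla)u_\eps\cdot\partial_t u_\eps$ by $-\xi\cdot\nu^*\,\partial_t(\phi_i\circ u_\eps)$, up to $\exc^i_\eps$ and the dissipation, and the chain rule applies. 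You should replace your purely geometric excess by (an analogue of) $\exc^i_\eps$, and verify that Lemma~\ref{lem: conv exc mult} -- the passage to the limit in $\exc^i_\eps$ -- still holds, before the rest of your Step~3 can go through.
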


      \begin{rem}
      For any partition $\chi^0 \in BV\left(\torus;\{0,1\}^P\right)$ it is possible to choose $u_\eps^0$ with $u_\eps^0 \to \sum_{1 \leq i \leq P} \chi_i \alpha_i$ in $L^1$ and $E_\eps(u_\eps^0) \to E_0(\chi)$
      by the $\Gamma$-convergence result \cite{baldo1990minimal}.
      \end{rem}
      
      Using some adjustments of our argument we can also deal with external forces and a volume constraint.
      
      \begin{thm}\label{thm FAC2FMCF}
	Let $W$ satisfy \eqref{growth}, \eqref{growth_derivative} and \eqref{decomposition_convex} and let $T<\infty$ be an arbitrary finite time horizon.
	Given a sequence of initial data $u_\eps^{0} \colon \torus \to \R^N$ approximating a partition $\chi^0$, in the sense of \eqref{initial data}
	and forces $f_\eps: [0,T]\times \torus \to \R^N$ such that
	\[\sup_{\eps>0} \int_0^T \int |f_\eps|^2+|\partial_t f_\eps|^2 + |\nabla f_\eps|^2 dx \,dt < \infty\]
	there exists a subsequence $\eps\downarrow 0$ such that
	the solutions $u_\eps$ of 
	\begin{equation}\label{allen cahn f}
	  \begin{cases}
	    \partial_t u_\eps  = \Delta u_\eps - \frac{1}{\eps^2}\partial_u W(u_\eps) + \frac{1}{\eps}f_\eps & \text{ in } [0,T]\times \torus, \\
	    u_\eps = u_\eps^0 & \text{ on } {0}\times\torus
	  \end{cases}
	\end{equation}
	converge to a time-dependent partition $\chi \in C([0,T];L^2(\torus;\{0,1\}^P))$.
	Furthermore, the forces also have a limit $f_\eps \to f$ in $L^2$.
	If the convergence assumption \eqref{conv_ass} holds, then $\chi$ moves by forced mean curvature according to 
	Definition \ref{def_motion_by_mean_curvature} with equation \eqref{H=v} replaced by
	\begin{align}\label{H=v+f}
	      \sum_{1\leq i,j\leq P} \sigma_{ij} \int_0^T \int  \left(\nabla\cdot\xi - \nu_i \cdot \nabla \xi \,\nu_i
	      -  V_i\, \xi \cdot \nu_i  \right) 
	       & \frac{1}{2}\left(|\nabla \chi_i|  + |\nabla \chi_j| - |\nabla (\chi_i + \chi_j)|\right) dt	\\
	      & \qquad \qquad = \sum_{1\leq i\leq P}\int_0^T \int  f \cdot \alpha_i (\xi\cdot \nabla) \,\chi_i  dt. \notag
	\end{align}
      \end{thm}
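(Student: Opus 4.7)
The plan is to adapt the proof of Theorem \ref{thm AC2MCF} by tracking how the additional term $\frac{1}{\eps}f_\eps$ propagates through the argument. Multiplying the forced equation \eqref{allen cahn f} by $\eps(\xi\cdot\nabla)u_\eps$ and integrating in space and time reproduces verbatim every contribution that appears in the unforced case, plus exactly one new quantity
\[
\int_0^T\int f_\eps\cdot (\xi\cdot\nabla)u_\eps\,dx\,dt.
\]
The task is therefore twofold: first, re-establish the compactness of $\{u_\eps\}$ in the presence of the source, and second, identify the limit of this new quantity. The curvature and velocity terms will then be handled exactly as in Propositions \ref{multi luckhaus modica} and \ref{multi prop dt nu}, since the forcing enters only algebraically and does not interact with those nonlinear arguments.

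For compactness, the modified energy-dissipation identity reads
\[
\int_0^T\int\eps|\partial_t u_\eps|^2\,dx\,dt + E_\eps(u_\eps(T)) = E_\eps(u_\eps^0) + \int_0^T\int f_\eps\cdot\partial_t u_\eps\,dx\,dt,
\]
and the source is tamed by integration by parts in time,
\[
\int_0^T\int f_\eps\cdot\partial_t u_\eps\,dx\,dt = \Bigl[\int f_\eps\cdot u_\eps\,dx\Bigr]_0^T - \int_0^T\int\partial_t f_\eps\cdot u_\eps\,dx\,dt,
\]
together with the coercive $L^p$-bound on $u_\eps$ coming from \eqref{growth} and the uniform space-time $H^1$-control of $f_\eps$ (which in particular gives $f_\eps\in C([0,T];L^2)$ by Sobolev embedding in time). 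After absorbing the boundary terms into $E_\eps(u_\eps(T))$ via Young's inequality and a Gronwall argument, one recovers uniform bounds on $\sup_t E_\eps(u_\eps(t))$ and on $\sqrt{\eps}\,\partial_t u_\eps$ in $L^2$. These are precisely the inputs needed to run the machinery of Section \ref{sec:comp} unchanged. The strong limit $f_\eps\to f$ in $L^2$ is produced by extracting a weak $H^1$-limit and applying Rellich's theorem on the compact cylinder $(0,T)\times\torus$.

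To identify the limit of the new forcing term, integrate by parts in space,
\[
\int_0^T\int f_\eps\cdot(\xi\cdot\nabla)u_\eps\,dx\,dt = -\int_0^T\int\bigl((\nabla\cdot\xi)f_\eps + (\xi\cdot\nabla)f_\eps\bigr)\cdot u_\eps\,dx\,dt.
\]
Since $u_\eps\to u = \sum_i\chi_i\alpha_i$ pointwise a.e.\ and is uniformly $L^p$-bounded (using $\int|u_\eps|^p \mathbf{1}_{\{|u_\eps|\geq R\}}\leq \frac{\eps}{c}E_\eps(u_\eps)\to 0$), Vitali and the boundedness of $u$ upgrade this to strong convergence in $L^2$; meanwhile $f_\eps\to f$ in $L^2$ and $\nabla f_\eps\rightharpoonup\nabla f$ in $L^2$. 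Passing to the limit term by term gives $-\int_0^T\int((\nabla\cdot\xi)f + (\xi\cdot\nabla)f)\cdot u\,dx\,dt$, and undoing the integration by parts using $\nabla u = \sum_i\alpha_i\nabla\chi_i$ (as vector-valued measures) yields exactly the right-hand side of \eqref{H=v+f}.

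The only genuine subtlety is bookkeeping the integrability so that the H\"older estimates in both the forced energy identity and the spatial integration by parts survive in the limit; the coercivity exponent $p\geq 2$ in \eqref{growth} paired with the $H^1$-control of $f_\eps$ is comfortably sufficient. In particular, no new conceptual obstacle appears beyond those already overcome in the proof of Theorem \ref{thm AC2MCF}; the forcing contributes an additive term that is linear in $u_\eps$ and therefore stable under the weak-strong pairing of $u_\eps$ and $f_\eps$.
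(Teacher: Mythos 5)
Your proposal is correct and follows essentially the same route as the paper: establish the modified energy-dissipation estimate by integrating the forcing term by parts in time and applying Gronwall (cf.\ Lemma~\ref{lem energy-dissipation f}), extract a strongly convergent subsequence $f_\eps\to f$ in $L^2$ from the $H^1$-bound via Rellich, test the forced equation with $\eps(\xi\cdot\nabla)u_\eps$, integrate the new term by parts in space and pass to the limit using the strong $L^2$-convergences of $u_\eps$ and $f_\eps$, and then invoke Propositions~\ref{multi luckhaus modica} and~\ref{multi prop dt nu} unchanged. Your explicit handling of the boundary terms $\bigl[\int f_\eps\cdot u_\eps\,dx\bigr]_0^T$ arising in the temporal integration by parts is in fact slightly more careful than the paper's terse statement of Lemma~\ref{lem energy-dissipation f}, which mentions only the $\partial_t f_\eps$ contribution.
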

      
       Since we allow $f$ to be only of class $W^{1,2}$, the right-hand side of \eqref{H=v+f} has to be interpreted in the following distributional sense
      \[
       \int_0^T \int \left(f \cdot \alpha_i\right) (\xi\cdot \nabla) \,\chi_i dt
       = -  \int_0^T \int   \left(\nabla \cdot \xi\right) \left( f\cdot \alpha_i\right) \chi_i +\xi \cdot \nabla \left(f \cdot \alpha_i \right) \chi_i \, dx\,dt.
      \]
      
      In the volume preserving case we only deal with the scalar equation.
      
      \begin{thm}\label{thm VPAC2VPMCF}
	Let $N=1$.
       	Let $W$ satisfy \eqref{growth}, \eqref{growth_derivative} and \eqref{decomposition_convex} with zeros at $0$ and $1$, i.e.\ we have $P=2$.
       	Let $T<\infty$ be an arbitrary finite time horizon.
	Given a sequence of initial data $u_\eps^{0} \colon \torus \to \R$ approximating a characteristic function $\chi^0$, in the sense that
	\[
	  u^{0}_\eps \to \chi^0 \quad \text{a.e.\ and}\quad   E_0:= E(\chi^0) = \lim_{\eps\downarrow 0} E_\eps(u^{0}_\eps)<\infty,
	\]
	there exists a subsequence $\eps\downarrow 0$ such that
	the solutions $u_\eps$ of 
	\begin{equation}
	  \begin{cases}\label{allen cahn vol}
	    \partial_t u_\eps  = \Delta u_\eps - \frac{1}{\eps^2} W'(u_\eps) + \frac{1}{\eps}\lambda_\eps & \text{ in } [0,T]\times \torus \\
	    u_\eps = u_\eps^0 & \text{ on } {0}\times\torus
	  \end{cases}
	\end{equation}
	with 
	\begin{equation}\label{def lambda}
	\lambda_\eps := - \fint \eps \Delta u_\eps - \frac{1}{\eps} W'(u_\eps) dx = \fint \frac{1}{\eps} W'(u_\eps) dx 
	\end{equation}
	converge to a time-dependent characteristic function $\chi \in C([0,T];L^2(\torus;\{0,1\}^P))$ with
	\[\int \chi(t,x) dx \equiv \int \chi^0(x)dx.\]
	Furthermore, we have
	\[\sup_{\eps>0} \int_0^T  \lambda_\eps^2\, dt < \infty\]
	and there is a limit $\lambda_\eps \rightharpoonup \lambda$ in $L^2(0,T)$.
	If the convergence assumption \eqref{conv_ass} holds, then $\chi$ moves by volume preserving mean curvature according to 
	Definition \ref{def_motion_by_mean_curvature} with equation \eqref{H=v} replaced by
	\begin{align}\label{H=v+lambda}
	       \int_0^T \int \left(\nabla\cdot\xi - \nu \cdot \nabla \xi \,\nu
	      -  V\, \xi \cdot \nu  \right) 
	      |\nabla \chi| dt = - \int_0^T \lambda \int (\nabla \cdot \xi) \chi \,dx\, dt.
	\end{align}
      \end{thm}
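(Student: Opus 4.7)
The plan is to follow the strategy of Theorem \ref{thm AC2MCF} specialized to the scalar two-phase setting and to accommodate the Lagrange multiplier $\lambda_\eps$. First, $\frac{1}{\eps}\lambda_\eps$ does not affect the energy dissipation: integrating \eqref{allen cahn vol} in space and invoking \eqref{def lambda} yields $\frac{d}{dt}\int u_\eps\,dx = 0$, so testing \eqref{allen cahn vol} with $\eps\partial_t u_\eps$ produces the same identity
\[\int_0^T \int \eps(\partial_t u_\eps)^2\,dx\,dt + E_\eps(u_\eps(T)) = E_\eps(u_\eps^0)\]
as in the unconstrained problem. The compactness framework of Section \ref{sec:comp} therefore applies verbatim, producing a limit $\chi \in C([0,T];L^2(\torus;\{0,1\}))$ satisfying \eqref{v=dtX}, with $\int \chi(t,\cdot)\,dx \equiv \int \chi^0\,dx$ in the limit.

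The first novelty is the uniform estimate $\sup_\eps \int_0^T \lambda_\eps^2\,dt < \infty$. Introducing the chemical potential $\mu_\eps := -\eps \Delta u_\eps + \eps^{-1}W'(u_\eps)$, the PDE rewrites as $\eps\partial_t u_\eps = \lambda_\eps - \mu_\eps$ with $\lambda_\eps = \fint \mu_\eps\,dx$ from \eqref{def lambda}. The naive pointwise bound $|\lambda_\eps| \leq \fint |W'(u_\eps)|/\eps\,dx$ is too lossy by a factor $1/\sqrt{\eps}$, so the argument must exploit the Modica--Mortola cancellation: for the 1D transition profile $q$ one has $\int W'(q(y))\,dy = q'(+\infty) - q'(-\infty) = 0$, so that the leading-order contribution to $\fint W'(u_\eps)/\eps\,dx$ cancels. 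I would make this rigorous by combining the refined energy identity
\[\frac{d}{dt}E_\eps(u_\eps) = -\int \eps(\partial_t u_\eps)^2\,dx,\]
which integrated in time controls $\mu_\eps - \lambda_\eps$ in $L^2((0,T)\times\torus;\eps^{-1}dx\,dt)$, with the scalar specialization of Proposition \ref{multi luckhaus modica} applied to rewrite the average $\fint W'(u_\eps)/\eps\,dx$ in terms of the energy-density measure and a curvature-like correction. Banach--Alaoglu then produces $\lambda_\eps \rightharpoonup \lambda$ in $L^2(0,T)$ along a subsequence.

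For the limit passage I multiply \eqref{allen cahn vol} by $\eps\,\xi \cdot \nabla u_\eps$ for $\xi \in C_0^\infty((0,T)\times \torus;\R^d)$ and integrate in space and time. Every term not involving the Lagrange multiplier is treated exactly as in the proof of Theorem \ref{thm AC2MCF}: the curvature contribution via the Reshetnyak/Luckhaus--Modica argument of Proposition \ref{multi luckhaus modica}, and the velocity contribution via Proposition \ref{multi prop dt nu}. The additional Lagrange-multiplier term integrates by parts on the torus to
\[\int_0^T \lambda_\eps \int \xi \cdot \nabla u_\eps\,dx\,dt = -\int_0^T \lambda_\eps \int (\nabla \cdot \xi)\,u_\eps\,dx\,dt,\]
which passes to the limit $-\int_0^T \lambda \int (\nabla \cdot \xi)\,\chi\,dx\,dt$ by the weak--strong pairing: $\lambda_\eps \rightharpoonup \lambda$ in $L^2(0,T)$, while the time-dependent integrand $t \mapsto \int (\nabla \cdot \xi)u_\eps\,dx$ converges strongly in $L^2(0,T)$ by dominated convergence applied to $u_\eps \to \chi$ in $L^1((0,T)\times\torus)$ together with the uniform $L^\infty$ bound on $u_\eps$.

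The step I expect to be the main obstacle is the uniform $L^2(0,T)$ bound on $\lambda_\eps$: the required cancellation is delicate and is where the scalar two-phase structure ($N=1$, $P=2$), together with the explicit equipartition of the Ginzburg--Landau energy, plays an essential role.
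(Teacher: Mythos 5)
Your overall plan is correct and matches the paper's strategy: the energy-dissipation identity carries over because $\lambda_\eps$ is $x$-independent and the volume is preserved; the compactness of Section \ref{sec:comp} applies via Remark \ref{compactness_for_forces}; the limit passage in the curvature and velocity terms goes through Propositions \ref{multi luckhaus modica} and \ref{multi prop dt nu}; and the Lagrange-multiplier term is handled by the weak--strong pairing $\lambda_\eps \rightharpoonup \lambda$ against the strong convergence of $t\mapsto \int(\nabla\cdot\xi)u_\eps\,dx$. You also correctly identify that the $L^2(0,T)$ bound on $\lambda_\eps$ is the crux and that the naive pointwise estimate loses a factor $\eps^{-1/2}$.

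However, your sketch of that bound is where a genuine gap remains. Appealing to ``Modica--Mortola cancellation'' via $\int W'(q)\,dy=0$ and ``the scalar specialization of Proposition \ref{multi luckhaus modica} to rewrite $\fint W'(u_\eps)/\eps$'' does not amount to an argument: Proposition \ref{multi luckhaus modica} concerns the pairing of the first variation with $(\xi\cdot\nabla)u_\eps$, not the bare spatial average $\fint \eps^{-1}W'(u_\eps)$, and there is no obvious way to extract the latter from it without introducing a test field. The paper's actual mechanism (Proposition \ref{prop estimate lambda}) is to multiply \eqref{allen cahn vol} by $\eps(\xi\cdot\nabla)u_\eps$, square, and observe that the resulting identity reads
\[
\lambda_\eps^2\Bigl(\int(\nabla\cdot\xi)\bigl(u_\eps-\langle u_\eps\rangle\bigr)\,dx\Bigr)^2 \lesssim \|\nabla\xi\|_\infty^2 E_\eps(u_\eps)^2 + \|\xi\|_\infty^2 \Bigl(\eps\int|\partial_t u_\eps|^2\,dx\Bigr)E_\eps(u_\eps),
\]
using \eqref{reshetnyak PI gives estimate} and Cauchy--Schwarz. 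The right-hand side integrates in time against the dissipation, so the whole point is to construct a single $\xi$ (depending on $\chi^0$ but not on $t$ or $\eps$) with $\|\xi\|_{W^{1,\infty}}\lesssim 1+E_0$ such that the coefficient $\int(\nabla\cdot\xi)(u_\eps-\langle u_\eps\rangle)\,dx$ is bounded \emph{below} uniformly in $t$ and small $\eps$. This non-degeneracy step is absent from your proposal; it is solved in the paper by taking $\xi=\nabla v$ with $\Delta v = \varphi_\delta * u - \langle u\rangle$ for a mollification $u_\delta$ of the limit $u$ at a well-chosen scale $\delta\sim 1/E_0$, verified with the $C_tL^2_x$-convergence from Lemma \ref{Upgraded_compactness} for the lower bound and with Calder\'on--Zygmund plus Morrey for the $W^{1,\infty}$ control. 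Without such a construction, the bound on $\lambda_\eps$ does not follow from the pieces you have assembled.
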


      Throughout the paper we will make use of the following notations:
      The symbol $\partial_t$ denotes the time-derivative, $\nabla$ the spatial gradient of a function defined on real space $\R^d\ni x$ , 
      $\partial_u W(u)$ denotes the gradient of $W$ at a point  $u \in\R^N$ in state space.
      For the functions $\phi_i$ we will abuse the notation $\partial_u$ in the sense given by the generalized chain rule below, see Lemma \ref{chain rule}.
      We will write $A \lesssim B$ if there exists a generic constant $C<\infty$ depending only on $d,\,N, \Lambda$ and $W$ such that
      $A\leq C\,B$. 

    \section{Compactness}\label{sec:comp}
      \subsection{Results}
	Before we turn to the actual compactness results, we specify the setting for the Allen-Cahn Equation and make sure that solutions actually exist.
	
	Although solutions to the Allen-Cahn Equation \eqref{allen cahn} are smooth, we choose the weak setting for the following reasons:
	\begin{enumerate}
	 \item The parabolic character of both the Allen-Cahn Equation and mean-curvature flow is much more explicit.
	 \item It is the natural setting when including forces, which we will do later on in Section \ref{sec:forces,volume}.
	 \item Once one accepts the function spaces involved, the necessary compactness properties for forced equations and equations with a volume constraint and how to deal with initial conditions becomes very natural.
	\end{enumerate}
	We will essentially view solutions as maps of $[0,T]$ into some function space,
	so that we will need to deal with Banach space-valued $L^p$ and Sobolev spaces.
	However, the material covered in Chapter 5.9 of \cite{evansbigbook} is perfectly sufficient for our purposes.
	
	\begin{defn}
	 We say that a function 
	 $u_\eps\in C([0,T];L^2(\torus;\R^N))$ is a weak solution of the Allen-Cahn Equation \eqref{allen cahn} for $\eps>0$ with initial data $u_\eps^0\in L^2(\torus,\R^N)$ if
	 \begin{enumerate}
	  \item the energy stays bounded: \[\sup_{0\leq t\leq T} E_\eps(u_\eps(t))<\infty,\]
	  \item its weak time derivative satisfies
	    \[\partial_t u_\eps \in L^2([0,T]\times \torus),\]
	  \item for \ae $t \in [0,T]$ and $\xi \in L^p(0,T;\R^N) \cap W^{1,2}(0,T;\R^N)$ we have
	    \[ \int \partial_t u_\eps (t) \cdot \xi+ \nabla u_\eps(t) \colon \nabla \xi  + \frac{1}{\eps^2} \partial_u W(u_\eps(t))\cdot \xi\, dx = 0,\]
	  \item the initial conditions are achieved:
	    \[u(0) = u^0.\]
	 \end{enumerate}
	
	\end{defn}

	\begin{rem}
	 Note that due to the growth condition \eqref{growth_derivative} of $\partial_u W$ we know that
	 \[|\partial_u W(u)|^\frac{p}{p-1} \lesssim |u|^{(p-1)\frac{p}{p-1}} = |u|^p.\]
	 Combining this with boundedness of the energy and the growth condition \eqref{growth} of $W$ we get $\partial_u W(u(t)) \in L^\frac{p}{p-1} = L^{p'}$ for almost all times.
	 
	 Also note that boundedness of the energy and the bound on the time derivative are sufficient to have $u\in C^\frac{1}{2}([0,T];L^2(\torus))$, up to a 
	 set of measure zero in time, by the embedding
	 \[W^{1,2}([0,T];L^2(\torus)) \hookrightarrow C^\frac{1}{2}([0,T];L^2(\torus)).\]
	 See \eqref{sobolev_into_hoelder} for a short proof of a similar statement.
	\end{rem}
	
	We first take a brief moment to mention the (not very surprising) fact that the Allen-Cahn Equation \eqref{allen cahn} in fact have global solutions.
	For the convenience of the reader we later give a proof which relies on De Giorgi's minimizing movements and thus carries over to related settings.
	We point out that the long-time existence critically depends on the gradient flow structure, as solutions to the reaction-diffusion equation
	\[\partial_t u - \Delta u = u^2\]
	generically blow up in finite time.

	\begin{lem}\label{existence}
	  Let $u_\eps^0: \torus \to \R^N$ be such that $E_\eps(u_\eps^0) < \infty$.
	  Then there exists a weak solution $u: [0,T]\times \torus \to \R^N$ to the Allen-Cahn Equation $\eqref{allen cahn}$ with initial data $u^0$.
	  Furthermore, the solution satisfies the following energy dissipation identity
	  \begin{equation}\label{energy-dissipation equality}
	    E_\eps(u_\eps(T)) + \int_0^T \int \eps \left|\partial_t u_\eps \right|^2 dx\,dt = E_\eps(u_\eps(0))
	  \end{equation}
	  and we have $\partial_i\partial_j u, \partial_u W(u) \in L^2([0,T]\times \torus)$ for all $1\leq i,j \leq d$.
	  In particular, we can test the Allen-Cahn equations \eqref{allen cahn} with $\nabla u$.
	\end{lem}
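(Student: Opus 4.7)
The plan is to construct the solution via De Giorgi's minimizing movements and then extract the regularity from the convex-plus-perturbation structure of $W$.

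First I would fix a time-step $h>0$ and iteratively define $u^{n+1}_h$ as a minimizer of
\[
u\longmapsto \frac{\eps}{2h}\int |u-u^n_h|^2\,dx+E_\eps(u),
\]
starting from $u^0_h:=u^0_\eps$. Existence of minimizers follows from the direct method: the growth assumption \eqref{growth} gives coercivity in $W^{1,2}$, and lower semicontinuity is obtained from the decomposition $W=W_{conv}+W_{pert}$ — the convex part is $L^1$-lower semicontinuous, the perturbation is continuous with quadratic growth and hence weakly continuous along sequences bounded in $W^{1,2}$. The Euler--Lagrange equation reads
\[
\eps\,\frac{u^{n+1}_h-u^n_h}{h}=\eps\Delta u^{n+1}_h-\tfrac{1}{\eps}\partial_u W(u^{n+1}_h),
\]
and comparing the minimizer against its predecessor yields the discrete dissipation
\[
E_\eps(u^{n+1}_h)+\frac{\eps}{2h}\int |u^{n+1}_h-u^n_h|^2\,dx\leq E_\eps(u^n_h).
\]
Summing this inequality gives uniform bounds on the energy and on $\eps\,\|\partial_t \tilde u_h\|_{L^2([0,T]\times\torus)}^2$, where $\tilde u_h$ is the piecewise affine and $\bar u_h$ the piecewise constant interpolant.

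Second, I would pass to the limit $h\downarrow 0$. The a priori bounds plus Aubin--Lions give strong $L^2$-compactness in space-time (for fixed $\eps$); the growth \eqref{growth_derivative} and the $L^p$-bound from the energy let us pass to the limit in the nonlinear term $\partial_u W(\bar u_h)$, identifying the limit $u_\eps$ as a weak solution. Continuity in time $u_\eps\in C^{1/2}([0,T];L^2)$ comes for free from $\partial_t u_\eps\in L^2$, so the initial condition is attained.

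Third, I would establish the second-order regularity. Reading the equation as $-\Delta u_\eps+\tfrac{1}{\eps^2}\partial_u W(u_\eps)=-\partial_t u_\eps$, squaring and integrating, and using that $-\int\Delta u_\eps\cdot\partial_u W(u_\eps)\,dx=\int\nabla u_\eps\!:\!\partial_u^2 W(u_\eps)\nabla u_\eps\,dx$ splits via $W=W_{conv}+W_{pert}$ into a non-negative contribution (discarded) and a term controlled by $\tilde C\int|\nabla u_\eps|^2\,dx$ thanks to \eqref{growth_pertubation}, I obtain
\[
\int_0^T\!\!\int\Bigl(|\Delta u_\eps|^2+\tfrac{1}{\eps^4}|\partial_u W(u_\eps)|^2\Bigr)\,dx\,dt\lesssim \int_0^T\!\!\int|\partial_t u_\eps|^2\,dx\,dt+\tfrac{\tilde C}{\eps^2}\int_0^T\!\!\int|\nabla u_\eps|^2\,dx\,dt,
\]
both right-hand terms being finite by the dissipation bound and the energy bound. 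Elliptic regularity on the torus upgrades $\Delta u_\eps\in L^2$ to $\partial_i\partial_j u_\eps\in L^2$. To make this argument rigorous one performs it first at the discrete level, testing the Euler--Lagrange equation by $-\Delta u^{n+1}_h+\tfrac{1}{\eps^2}\partial_u W(u^{n+1}_h)$, and then passes to the limit by lower semicontinuity. Once $\partial_t u_\eps\in L^2$ and $\Delta u_\eps,\partial_u W(u_\eps)\in L^2$ are available, we may legitimately test the equation by $\partial_t u_\eps$, which turns the inequality into the energy-dissipation equality \eqref{energy-dissipation equality}.

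The main obstacle will be step three: the minimizing-movements procedure naturally delivers only the dissipation \emph{inequality} and not the stronger $L^2$ regularity of $\Delta u_\eps$ and $\partial_u W(u_\eps)$. Extracting those bounds requires carefully exploiting the decomposition $W=W_{conv}+W_{pert}$ at the discrete level (where the non-convex part $W_{pert}$ must be absorbed into the quadratic term for small enough $h$) and then transferring the resulting uniform-in-$h$ bound to the limit by weak lower semicontinuity; once done, both the sharp energy identity and the ability to test against $\nabla u_\eps$, announced in the last sentence of the lemma, follow as immediate corollaries.
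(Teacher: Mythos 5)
Your proposal is correct and follows essentially the same strategy as the paper: De Giorgi's minimizing movements for existence and the a priori bound, the convex-plus-quadratic-perturbation split of $W$ to get the second-order regularity, and the energy identity as a by-product once the right-hand side is $L^2$-integrable. The only cosmetic difference is in the regularity step: you test the equation against $-\Delta u_\eps + \tfrac{1}{\eps^2}\partial_u W(u_\eps)$ and then invoke elliptic regularity to pass from $\Delta u_\eps \in L^2$ to $\partial_i\partial_j u_\eps \in L^2$, whereas the paper differentiates the equation in a spatial direction and tests against $\partial_i u_\eps$, obtaining $\partial_i\nabla u_\eps\in L^2$ directly; both variants hinge on the same integration-by-parts identity $-\int\Delta u\cdot\partial_u W(u)=\int\nabla u:\partial_u^2 W(u)\nabla u$ and the same use of \eqref{decomposition_convex}.
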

	
	\begin{rem}\label{rmk dE/dt formal computation}
	Here, the identity \eqref{energy-dissipation equality} plays the role of an a priori estimate, which makes the whole machinery work.
	It can be formally derived by differentiating the energy along the solution:
	\begin{align*}
	  \frac{d}{dt} E_\eps(u_\eps) 
	  = & \int \eps \nabla u_\eps : \nabla \partial_t u_\eps + \frac1\eps \partial_u W(u_\eps) \cdot \partial_t u_\eps \,dx\\
	  = & \int  \eps \left( - \Delta u_\eps + \frac1{\eps^2} \partial_u W(u_\eps) \right) \cdot \partial_t u_\eps \,dx\\
	  \overset{\eqref{allen cahn}}{=} & -\int \eps |\partial_t u_\eps |^2 \, dx.
	\end{align*}
	\end{rem}

	\begin{rem}
	  Note that by choosing $W \equiv 0$ in this calculation, we get a similar estimate for the heat equation.
	  The structure of this estimate (the energy is bounded in time, while the time-derivative is only $L^2$-integrable) naturally leads to the mixed spaces we consider here and is our main justification for working in the weak setting.
	  
	  We also remark that the heat equation admits many different interpretations as a gradient flow.
	  Here we chose to view it as an $L^2$-gradient flow w.r.t.\ to the energy
	  $\int |\nabla u|^2 dx$
	  in order to compare it to the Allen-Cahn Equation.
	  However, when proving existence results for the heat equations it is more beneficial to interpret it as an $H^{-1}$-gradient flow w.r.t.\ to the energy
	  $\int u^2 dx$
	  as this choice allows to accommodate more general forces.
%
%
%
	\end{rem}
	
	\begin{rem}\label{compactness_for_forces}
	  As the a priori estimate is a natural consequence of the gradient flow structure we expect to have similar estimates in the case of forced equations and volume constraints.
	  In order to later deal with these more general equations we point out that the proofs of the following statements (Proposition \ref{prop_comp}, Lemma \ref{Upgraded_compactness}, Proposition \ref{prop_normal_velocities} and Lemma \ref{lem impl conv ass}) only rely on the a priori estimate \eqref{energy-dissipation equality} and not on the Allen-Cahn Equation \eqref{allen cahn} itself.
	  To be more precise, they remain valid - with slightly different quantitative estimates - for functions $u_\eps \in C([0,T];L^2( \torus ; \R^N))$ satisfying the bound
	  \begin{equation}\label{generalized_energy_dissipation}
	    \sup_{\eps>0} \sup_{0\leq t\leq T } E_{\eps}(u_\eps(t)) + \int_0^T \eps |\partial_t u_\eps|^2 dt < \infty.
	  \end{equation}
	\end{rem}

	We now turn to the central question of compactness for the constructed solutions:
	\begin{itemize}
	 \item Proposition \ref{prop_comp} ensures that there exists a time-dependent limiting partition, whose motion we want to characterize later on.
	 \item Lemma \ref{Upgraded_compactness} upgrades the convergence of $u_\eps$ to $\sum_i \chi_i \alpha_i$ to strong $C\left([0,T];L^2(\torus)\right)$ convergence, in particular implying that the initial conditions are achieved.
	 \item Proposition \ref{prop_normal_velocities} states that the partition is regular enough in time to admit normal velocities.
	\end{itemize}
	
	The existence of a limiting partition is essentially contained in the classical $\Gamma$-convergence theorem by Baldo \cite{baldo1990minimal}.
	In particular, it is constructed by considering the limits of $\phi_i \circ u_\eps$ with
	\begin{equation}  \label{phi_i}
	  \phi_i(u) := d_W(u,\alpha_i),\text{ where $d_W$ was defined in \eqref{geodesic distance}.}
	\end{equation}
	The main difference is that we also want the partition to be well-behaved in time, which we will make sure by exploiting that the control of $\partial_t u_\eps$ and $\nabla  u_\eps$ is similar.
	
	\begin{prop}\label{prop_comp}
	  Given initial data $u_\eps^0 \to \sum_i \chi_i^0\alpha_i$ with 
	  \[
	    E_\eps(u_\eps^0) \to E(\chi^0)<\infty,
	  \]	
	  for any sequence there exists a subsequence $\eps\downarrow0$ such that the solutions $u_\eps$ of \eqref{allen cahn}
	  converge:
	  \begin{align}\label{u eps to chi}
	    u_\eps \to u \quad \ae \text{in } (0,T) \times \torus.
	  \end{align}
	  Here the limit is given by $u = \sum_i \chi_i \alpha_i$ with a partition $\chi\in BV( (0,T)\times \torus; \{0,1\}^P)$.
	  Furthermore we have
	  \[
	    \sup_{0\leq t \leq T} E(\chi) \leq E_0
	  \]
	  and the compositions $\phi_i\circ u_\eps$ are uniformly bounded in $BV((0,T)\times \torus)$ and converge:
	  \begin{equation}\label{phi eps to phi}
	    \phi_i\circ u_\eps \to \phi_i\circ u \quad \text{in } L^1([0,T]\times\torus).
	  \end{equation}
	\end{prop}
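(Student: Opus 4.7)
The plan is to follow the classical Modica-Mortola-Baldo strategy and work with the compositions $\phi_i \circ u_\eps$, where $\phi_i$ was defined in \eqref{phi_i}. By construction $\phi_i$ is Lipschitz with $|\partial_u \phi_i(u)| \leq \sqrt{2W(u)}$, so the chain rule (Lemma \ref{chain rule}) combined with Young's inequality gives the pointwise spatial bound
\begin{equation*}
  |\nabla (\phi_i \circ u_\eps)| \leq |\nabla u_\eps|\sqrt{2W(u_\eps)} \leq \tfrac{\eps}{2}|\nabla u_\eps|^2 + \tfrac{1}{\eps}W(u_\eps),
\end{equation*}
from which $\int |\nabla(\phi_i \circ u_\eps(t))|\,dx \leq E_\eps(u_\eps(t))$ is uniformly bounded in $(\eps,t)$ via the dissipation identity \eqref{energy-dissipation equality}. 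For the time derivative, applying the chain rule and Cauchy-Schwarz yields
\begin{equation*}
  \int_0^T\!\!\int |\partial_t(\phi_i \circ u_\eps)|\,dx\,dt \leq \Bigl(\int_0^T\!\!\int \eps |\partial_t u_\eps|^2\Bigr)^{1/2}\Bigl(\int_0^T\!\!\int \tfrac{2 W(u_\eps)}{\eps}\Bigr)^{1/2} \lesssim \sqrt{T}\, E_0,
\end{equation*}
so the compositions $\phi_i \circ u_\eps$ are uniformly bounded in $BV((0,T)\times\torus)$.

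By BV-compactness and a diagonal argument over $i=1,\dots,P$, extract a subsequence along which $\phi_i \circ u_\eps \to \psi_i$ in $L^1$ and pointwise almost everywhere. Separately, $\int_0^T\!\int W(u_\eps)\,dx\,dt \lesssim \eps T E_0 \to 0$, so along a further subsequence $W(u_\eps) \to 0$ a.e. Because $W$ has isolated minima at $\{\alpha_1,\dots,\alpha_P\}$ and satisfies the coercive growth \eqref{growth}, this forces every a.e.\ accumulation point of $u_\eps(t,x)$ to lie in this finite set of wells. Two distinct wells cannot be accumulation points at the same $(t,x)$: otherwise $\phi_{j_1}\circ u_\eps(t,x)$ would oscillate between $0$ and the strictly positive number $d_W(\alpha_{j_1},\alpha_{j_2})>0$, contradicting the established a.e.\ convergence. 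Hence $u_\eps \to u := \sum_i \chi_i \alpha_i$ a.e., where $\chi_i := \chara_{\{u = \alpha_i\}}$ automatically forms a partition. The identifications $\psi_i = \phi_i \circ u$ and the convergence \eqref{phi eps to phi} are immediate, and $\chi \in BV((0,T)\times\torus;\{0,1\}^P)$ follows by recovering each $\chi_i$ as a level set of the finitely-valued BV function $\phi_i \circ u$.

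For the pointwise-in-time energy estimate, at a.e.\ $t$ the uniform $L^p$ bound on $u_\eps(t)$ coming from the growth condition \eqref{growth} together with $\sup_t E_\eps(u_\eps(t))\leq E_0$ makes $\{u_\eps(t)\}$ uniformly integrable, so Vitali's theorem upgrades the a.e.\ convergence to convergence in $L^1(\torus)$. Baldo's $\Gamma$-$\liminf$ inequality \cite{baldo1990minimal} then gives $E(\chi(t)) \leq \liminf_{\eps\downarrow 0} E_\eps(u_\eps(t)) \leq E_0$, and the bound extends to all $t$ after adjusting $\chi$ on a null set of times. The main obstacle is the identification step: $L^1$-convergence of each $\phi_i\circ u_\eps$ only records the geodesic distance to a single well, and one has to combine the information from all $P$ indices with the a.e.\ vanishing of $W(u_\eps)$ to exclude oscillation between wells and recover genuine a.e.\ convergence of $u_\eps$ itself, after which the remaining conclusions of the proposition follow by routine arguments.
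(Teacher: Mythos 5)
Your proof is correct and follows the same overall skeleton as the paper's (uniform $BV$ bounds on $\phi_i\circ u_\eps$ via chain rule and Young's inequality, Rellich compactness, identification of a partition, coarea formula for $\chi_i\in BV$, and the $\Gamma$-$\liminf$ inequality for the energy bound). The genuine difference is in the identification step. The paper passes to a subsequence on which $u_\eps$ generates a Young measure $p_{t,x}$, uses $\int W(u_\eps)\to0$ to see that $p_{t,x}$ concentrates on the wells, and then invokes strong $L^1$-compactness of $f\circ\phi_i(u_\eps)$ together with equality in Jensen's inequality (for $f$ uniformly convex) to show each $p_{t,x}$ is a Dirac mass, from which $\chi_i(t,x):=p_{t,x}(\alpha_i)$. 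You bypass Young measures entirely: after extracting a subsequence on which both $\phi_i\circ u_\eps\to\psi_i$ a.e.\ (all $i$) and $W(u_\eps)\to0$ a.e., you argue pointwise that at a.e.\ $(t,x)$ the sequence $u_\eps(t,x)$ is eventually bounded by the coercivity \eqref{growth}, has all its accumulation points in the finite set of wells, and cannot accumulate at two distinct wells $\alpha_{j_1}\neq\alpha_{j_2}$ because then $\phi_{j_1}\circ u_\eps(t,x)$ would oscillate between $0$ and $\sigma_{j_1j_2}>0$, contradicting a.e.\ convergence. This is more elementary and avoids the measure-disintegration and Jensen-equality machinery; the paper's Young measure route, however, is the one that generalizes more smoothly when one wants quantitative control on how the mass concentrates. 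Both are sound; your argument is arguably cleaner for this particular statement.

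One small point worth being explicit about: the conclusion $\sup_{0\le t\le T}E(\chi(t))\le E_0$ at this stage is only available for a.e.\ $t$ (since a.e.\ convergence in $(0,T)\times\torus$ only gives, via Fubini, a.e.\ convergence on slices for a.e.\ $t$); as you note, one must either read this as an essential supremum or wait for Lemma \ref{Upgraded_compactness} to supply the continuous-in-time representative. The paper is equally terse here, so this is not a defect of your argument, but it deserves a sentence.
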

	
	In the following lemma, we record some properties of the functions $\phi \circ u_\eps$, such as the estimates going back to Modica and Mortola by which one deduces $BV$-compactness of these compositions.
	The main point is however that we will need more precise information about $\phi \circ u_\eps$ than for the previously known $\Gamma$-convergence results, where one only needs upper bounds for $|\nabla (\phi \circ u_\eps)|.$ 
	
	Because our proof works by multiplying the Allen-Cahn equation \eqref{allen cahn} with $\eps \xi \cdot \nabla u$, we will need to pass to the limit in non-linear quantities of $u_\eps$, such as $\int \eta \sqrt{2W(u_\eps)}\nabla u_\eps$.
	For scalar equations one can easily identify the limit by applying the chain rule to see that this non-linearity has the form $\nabla (\phi \circ u_\eps)$, where the primitive $\phi$ is given by $\phi(u):= \int_{\alpha_1}^u\sqrt{2W(\tilde u)}\, d\tilde u$.
	In the multi-phase case, unfortunately, the classical chain rule does not apply anymore:
	Because there could be multiple geodesics between $u$ and $\alpha_i$, the geodesic distances $\phi_i(u)$, playing the roles of ``primitives'', are only locally Lipschitz-continuous in general.
	
	Luckily, there is a chain rule for Lipschitz functions due to Ambrosio and Dal Maso \cite{ambrosio1990general}.
	The upshot is that given a Lipschitz function $f$ and a function $u$ there exists a bounded function $g(x,u)$, defined almost everywhere, such that
	\[D (f \circ u)(x) = g(x,u) D u(x)\]
	and the dependence of $g$ on $u$ is local in $x$, but not pointwise.
	See Theorem \ref{chain rule, general} in the proof of Lemma \ref{chain rule} for the precise formulation.
	
	The following lemma mainly serves to fix and justify our somewhat abusive notation of these differentials.
      
	\begin{lem}\label{chain rule}
	  Let $u \in C([0,T];L^2( \torus ; \R^N))$ with
	  \[\sup_{0\leq t\leq T} E_\eps(u) + \int_0^T \int \eps |\partial_t u|^2 dx\, dt < \infty\]
	  for some $\varepsilon>0$.
	  Then for all $1\leq i \leq P$ there exists a map 
	  \[\partial_u \phi_i(u) : [0,T]\times\torus \to \operatorname{Lin}(\R^N;\R)\]
	   such that the chain rule is valid with the pair $\partial_u \phi_i(u)$ and $(\partial_t, \nabla) u$:
	  For almost every $(t,x)\in [0,T]\times \torus$ we have 
	  \begin{align}\label{Dphi}
	    \nabla \left( \phi_i \circ u\right)  = \partial_u \phi_i(u) \nabla u\quad \text{and} \quad
	    \partial_t \left( \phi_i \circ u\right)   = \partial_u \phi_i(u)  \partial_t u.
	  \end{align}
	  Furthermore, we can control the modulus of $\partial_u \phi_i(u)$ almost everywhere in time and space:
	  \begin{align}
	    |\partial_u \phi_i(u)| \leq \sqrt{2W(u)}.\label{Lipschitz estimate}
	  \end{align}
	  Additionally, we have $\phi_i\circ u \in  L^\infty\left([0,T];W^{1,1}(\torus)\right) \cap W^{1,1}([0,T]\times \torus)$ with the estimates
	  \begin{align}
	  \sup_{0\leq t \leq T} \int |\phi_i\circ u| \, dx & \lesssim 1 + \sup_{0\leq t\leq T} \eps E_\eps(u), \label{L1_estimate_space}\\
	  \sup_{0\leq t \leq T} \int  |\nabla ( \phi_i\circ u)|\, dx & \lesssim \sup_{0\leq t\leq T} E_\eps(u), \label{BV_estimate_space}\\
	  \int_0^T \int  |\partial_t (\phi_i\circ u)| dx dt & \lesssim T \sup_{0\leq t\leq T} E_\eps(u) + \int_0^T \int \eps |\partial_t u|^2 dx\, dt. \label{BV_estimate_time}
	  \end{align}
	\end{lem}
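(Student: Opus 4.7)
The plan is to invoke the Ambrosio--Dal Maso generalized chain rule (Theorem \ref{chain rule, general}) applied to the composition $\phi_i \circ u$, after first establishing the pointwise bound on the differential of $\phi_i$. For this, I would begin with the variational characterization \eqref{geodesic distance}: comparing the geodesic from $\alpha_i$ to $u$ with the concatenation of a geodesic from $\alpha_i$ to $v$ and the straight segment from $v$ to $u$ yields
\[
|\phi_i(u) - \phi_i(v)| \leq \int_0^1 \sqrt{2W\bigl(v+s(u-v)\bigr)}\, |u-v|\, ds,
\]
which shows $\phi_i$ is locally Lipschitz with modulus controlled by $\sqrt{2W}$ on any compact set (the polynomial growth \eqref{growth} gives local boundedness of $W$). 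Rademacher's theorem then gives classical differentiability of $\phi_i$ at a.e.\ $u \in \R^N$, with $|\nabla \phi_i(u)| \leq \sqrt{2W(u)}$ pointwise.

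Next, I view $u$ as a map in $W^{1,1}([0,T]\times \torus;\R^N)$, which is justified since the energy bound yields $\nabla u\in L^2$ and $\partial_t u \in L^2$ gives the time derivative. I apply the chain rule of Ambrosio--Dal Maso jointly in $(t,x)$ to produce a measurable map $g(t,x,u)$ realizing both identities in \eqref{Dphi} simultaneously. Since $\phi_i$ is only locally (not globally) Lipschitz, I would either truncate via $\phi_i^k := \min(\phi_i,k)$, apply the theorem to each $\phi_i^k$ and pass to the limit using monotone convergence, or invoke the standard localized version of the theorem. Setting $\partial_u \phi_i(u) := g$, the pointwise bound \eqref{Lipschitz estimate} is inherited directly from the Rademacher bound on $\nabla \phi_i$.

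The Sobolev-type estimates are then routine applications of Young's inequality. For \eqref{BV_estimate_space},
\[
\int |\nabla(\phi_i\circ u)|\, dx \leq \int \sqrt{2W(u)}\,|\nabla u|\, dx \leq \int \frac{W(u)}{\eps} + \frac{\eps}{2}|\nabla u|^2 \, dx \leq E_\eps(u),
\]
and similarly integration in time of the analogous bound with $\partial_t u$ replacing $\nabla u$ gives \eqref{BV_estimate_time}: the right-hand side equals $\int_0^T \frac{1}{\eps}\int W(u) + \frac{\eps}{2}\int|\partial_t u|^2\, dx\, dt$, which is bounded by $T \sup_t E_\eps(u(t)) + \frac12 \int_0^T\!\int \eps|\partial_t u|^2\, dx\, dt$. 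For \eqref{L1_estimate_space} I would estimate $\phi_i(u)$ along the straight segment from $\alpha_i$ to $u$, which combined with \eqref{growth} yields $\phi_i(u) \lesssim 1 + |u|^{1+p/2}$; the growth lower bound $W(u)\geq c|u|^p$ for $|u| \geq R$ gives $\int |u|^p\,dx \lesssim 1 + \eps E_\eps(u)$, and H\"older with exponent $\theta = 1/2 + 1/p \leq 1$ followed by the inequality $x^\theta \leq 1+x$ produces the desired $\int \phi_i(u)\, dx \lesssim 1 + \eps E_\eps(u)$.

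The main technical hurdle is the non-global Lipschitz character of $\phi_i$, which forces one to rely on a local version of the Ambrosio--Dal Maso theorem, and to obtain the pointwise bound $|\partial_u \phi_i(u)| \leq \sqrt{2W(u)}$ in a form strong enough to survive composition with the vector-valued $u$. Once this single pointwise estimate is in hand, the remainder of the lemma is essentially the classical Modica--Mortola bookkeeping.
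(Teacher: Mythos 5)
Your overall strategy tracks the paper's: both proofs hinge on the Ambrosio--Dal~Maso chain rule, both establish the local Lipschitz modulus of $\phi_i$ via the geodesic/straight-line comparison, both truncate to reduce to the globally Lipschitz case, and the three integral estimates \eqref{L1_estimate_space}--\eqref{BV_estimate_time} are handled in both by Young's inequality together with the growth of $W$. The two places where you genuinely diverge are the truncation and the justification of \eqref{Lipschitz estimate}.

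On truncation: you propose to cut off in the \emph{target} by $\phi_i^k := \min(\phi_i, k)$ and pass to the limit in $k$; the paper instead truncates in the \emph{source} by the componentwise cutoff $u_{M,j} := \mathrm{sign}(u_j)(M\wedge |u_j|)$, so that $\phi_i$ is Lipschitz on the image of $u_M$ and Theorem~\ref{chain rule, general} applies directly. Both reductions can be made to work. The paper's choice is slightly more economical at the limit stage because $(\partial_t,\nabla)u_{M}$ and hence $\partial_u\phi_i(u_M)$ become stationary a.e.\ as $M\uparrow\infty$, so the passage to the limit is immediate; with your variant you need the additional observation that $D(\phi_i^k|_{T^u_x})$ agrees with $D(\phi_i|_{T^u_x})$ on $\{\phi_i\circ u<k\}$ and vanishes outside, plus a dominated-convergence argument using the Lipschitz bound. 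It is not wrong, just a little more bookkeeping.

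There is, however, a logical gap in your derivation of \eqref{Lipschitz estimate}. You invoke Rademacher to get classical differentiability of $\phi_i$ at Lebesgue-a.e.\ $v\in\R^N$ together with $|\nabla\phi_i(v)|\le\sqrt{2W(v)}$ there, and then assert that the bound ``is inherited directly'' by the Ambrosio--Dal~Maso object $g$. This does not follow: the exceptional set for Rademacher is a Lebesgue-null set in state space $\R^N$, and nothing prevents the curve $(t,x)\mapsto u(t,x)$ from spending positive measure time in that null set. Moreover, the quantity $g(t,x)$ produced by Theorem~\ref{chain rule, general} is the differential of the \emph{restriction} $\phi_i|_{T^u_{t,x}}$ at $u(t,x)$, which need not coincide with a full classical differential $\nabla\phi_i(u(t,x))$ even where the latter exists. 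The correct route — and the one the paper takes — is to estimate the tangential difference quotient directly: for $v\in\dot T_{t,x}$ and $h>0$,
\[
\bigl|\phi_i(u+hv)-\phi_i(u)\bigr|\le d_W(u+hv,u)\le\int_0^1\sqrt{2W(u+shv)}\,h|v|\,ds,
\]
and letting $h\downarrow0$ using the continuity of $W$ gives $|D(\phi_i|_{T_{t,x}})(u(t,x))v|\le\sqrt{2W(u(t,x))}|v|$ at the points where this tangential differential exists, i.e.\ a.e.\ $(t,x)$ by Theorem~\ref{chain rule, general}. You already wrote down the key inequality; just apply it tangentially at the $x$-points supplied by Ambrosio--Dal~Maso rather than appealing to Rademacher in state space.

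Finally, your treatment of \eqref{L1_estimate_space} via the sharper exponent $1+p/2$ and H\"older with $\theta=\tfrac12+\tfrac1p$ is fine; the paper simply uses the coarser bound $d_W(0,v)\lesssim 1+|v|^p$ (valid since $p\ge 2$) and then \eqref{u goes to zeros} directly, which is slightly simpler but arrives at the same estimate.
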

	
	Next, we turn to the stronger compactness properties of $u_\eps$.
	In the case of the Allen-Cahn Equation without forces or constraints, it mainly serves to ensure that the initial data is achieved.
	When including forces or constraints we will also need it in the proof of the actual convergence.
	
	\begin{lem}\label{Upgraded_compactness}
	  We have $\phi_i \circ u_\eps \in W^{1,2}([0,T];L^1(\torus))$ with the estimate
	  \begin{align}
	    \left( \int_0^T \left(\int |\partial_t (\phi_i \circ u_\eps) | dx \right)^2 dt \right)^\frac{1}{2}  \lesssim E_\eps(u_\eps(0)).\label{L2L1-estimate_derivative}
	  \end{align}
	  Furthermore, the sequence $u_\eps$ is pre-compact in $C\left([0,T];L^2(\torus;\R^N)\right)$.
	  In particular, we get that $\chi$ achieves the initial data in $C([0,T];L^2(\torus))$.
%
	\end{lem}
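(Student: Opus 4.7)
I would split the proof into two independent tasks: the $L^2_t L^1_x$ estimate on $\partial_t(\phi_i\circ u_\eps)$, and the $C([0,T];L^2)$ pre-compactness of $u_\eps$ itself.

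\textbf{Derivative estimate.} Combining the chain rule from Lemma \ref{chain rule} with the pointwise bound \eqref{Lipschitz estimate} gives
\[
|\partial_t (\phi_i\circ u_\eps)| \;=\; |\partial_u \phi_i(u_\eps)\,\partial_t u_\eps| \;\leq\; \sqrt{2W(u_\eps)}\,|\partial_t u_\eps|
\]
almost everywhere in $(t,x)$. Applying Cauchy--Schwarz in the spatial integral, the first factor is controlled by $\bigl(2\varepsilon E_\eps(u_\eps(t))\bigr)^{1/2}/\sqrt{\eps}$, hence by the energy-dissipation identity \eqref{energy-dissipation equality}
\[
\int|\partial_t(\phi_i\circ u_\eps)|\,dx \;\leq\; \sqrt{2 E_\eps(u_\eps^0)}\,\Bigl(\int\eps|\partial_t u_\eps|^2\,dx\Bigr)^{1/2}.
\]
Squaring and integrating in $t$, and using \eqref{energy-dissipation equality} once more on the resulting time integral, yields the claimed bound \eqref{L2L1-estimate_derivative}.

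\textbf{Pre-compactness of $\phi_i\circ u_\eps$ in $C([0,T];L^1)$.} Cauchy--Schwarz in time promotes \eqref{L2L1-estimate_derivative} to the H\"older-$\tfrac12$ equi-continuity
\[
\|\phi_i\circ u_\eps(t)-\phi_i\circ u_\eps(s)\|_{L^1(\torus)} \;\leq\; |t-s|^{1/2}\,\|\partial_t(\phi_i\circ u_\eps)\|_{L^2_tL^1_x} \;\lesssim\; |t-s|^{1/2} E_\eps(u_\eps^0).
\]
Together with the uniform spatial $BV$-bound \eqref{BV_estimate_space} and the compact embedding $BV(\torus)\hookrightarrow\hookrightarrow L^1(\torus)$, the Arzel\`a--Ascoli theorem in the Banach-space-valued setting yields a subsequence along which each $\phi_i\circ u_\eps$ converges in $C([0,T];L^1(\torus))$.

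\textbf{Upgrade to $C([0,T];L^2)$ for $u_\eps$.} The energy bound combined with the coercivity \eqref{growth} of $W$ yields a uniform $L^\infty_t L^p_x$-bound on $u_\eps$ and, crucially, the concentration estimate $\int W(u_\eps(t))\,dx \leq \eps E_\eps(u_\eps^0)$. Fixing a small neighborhood of the well set $\{\alpha_1,\ldots,\alpha_P\}$ and working on its complement, Chebyshev gives a set of measure $O(\eps)$, on which the equi-integrability of $|u_\eps|^p$ (with $p\geq 2$) ensures a vanishing $L^2$-contribution. On the neighborhoods of individual wells the quadratic vanishing of $W$ at $\alpha_j$ implies the non-degeneracy $|u_\eps-\alpha_j|^2 \lesssim \phi_j(u_\eps)$, so that differences $\|u_\eps(t)-u_\eps(s)\|_{L^2}^2$ decompose into the small-measure error plus a term dominated by $\sum_i\|\phi_i\circ u_\eps(t)-\phi_i\circ u_\eps(s)\|_{L^1}$. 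Equi-continuity in time in $L^2$ therefore follows from the previous step, and the same decomposition gives $L^2$-pre-compactness at each fixed $t$. The limit is identified as $u=\sum_i\chi_i\alpha_i$ via the a.e.\ convergence in Proposition \ref{prop_comp}, and the initial condition $\chi(0)=\chi^0$ follows by evaluating at $t=0$ since $u_\eps(0)=u_\eps^0\to\sum_i\chi_i^0\alpha_i$.

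\textbf{Main obstacle.} The delicate point is the last step: since the $\phi_i$ need not be injective (multiple geodesics are possible), one cannot recover $u_\eps$ from $(\phi_1\circ u_\eps,\ldots,\phi_P\circ u_\eps)$ pointwise. The remedy is the energy-forced concentration near the wells, where the quadratic non-degeneracy of $W$ converts the established $C([0,T];L^1)$-compactness of the $\phi_i\circ u_\eps$ into $C([0,T];L^2)$-compactness of $u_\eps$; without this concentration the two regularity statements would live on different function spaces.
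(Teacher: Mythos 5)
Your first two steps are fine and match the paper closely: the pointwise bound $|\partial_t(\phi_i\circ u_\eps)|\le\sqrt{2W(u_\eps)}\,|\partial_t u_\eps|$, spatial Cauchy--Schwarz, and the energy-dissipation identity give \eqref{L2L1-estimate_derivative} (the paper phrases this as a duality against $\zeta\in L^2(0,T)$, but it is the same estimate), and the H\"older-$\tfrac12$ equi-continuity plus the spatial $BV$ bound and Arzel\`a--Ascoli give pre-compactness of $\phi_i\circ u_\eps$ in $C([0,T];L^1)$. The third step is where the argument breaks.

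The crux is that you invoke a quantitative lower bound $|u-\alpha_j|^2\lesssim\phi_j(u)$ near each well, attributed to ``quadratic vanishing of $W$''. This is not among the paper's hypotheses on $W$ (see \eqref{growth}--\eqref{growth_pertubation}): $W$ is only assumed smooth, nonnegative, with finitely many zeros and polynomial growth/convexity at infinity — nothing prevents $W$ from vanishing to higher order at a well, in which case $\phi_j(u)\sim|u-\alpha_j|^{k+1}$ for some $k\geq 2$ and your inequality fails. The paper itself flags exactly this: ``we have no quantitative information about how quickly $\phi_i$ grows around $\alpha_i$. Consequently, we have to make do with $u_\eps$ only converging in measure uniformly in time.'' Moreover, even granting quadratic non-degeneracy, the inequality you actually need, namely $|u_\eps(t,x)-u_\eps(s,x)|^2\lesssim\sum_i|\phi_i(u_\eps(t,x))-\phi_i(u_\eps(s,x))|$ on the good set, is false: if $u_\eps(t,x)$ and $u_\eps(s,x)$ sit on opposite sides of the same well $\alpha_j$ at equal geodesic distance, every term $\phi_i(u_\eps(t,x))-\phi_i(u_\eps(s,x))$ can be close to zero while $|u_\eps(t,x)-u_\eps(s,x)|$ is not. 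The non-injectivity of the $\phi_i$ you mention in your ``Main obstacle'' is precisely the problem, and it is not cured by non-degeneracy of $W$. Finally, your equi-integrability argument via $|u_\eps|^p$ only works when $p>2$; the borderline case $p=2$ (permitted by \eqref{growth}) needs the extra Sobolev-embedding argument the paper gives in its Step 4.

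The paper's route avoids all of this by never comparing $u_\eps(t)$ to $u_\eps(s)$, and hence never invoking Arzel\`a--Ascoli at the level of $u_\eps$. Instead: from the $C([0,T];L^1)$-convergence of $\phi_i\circ u_\eps$ to $\phi_i\circ u$ and the \emph{qualitative} fact that $\min\{d_W(\alpha_i,v):|v-\alpha_i|\ge\delta\}>0$, one gets convergence of $u_\eps$ to the known limit $u=\sum_i\chi_i\alpha_i$ \emph{in measure, uniformly in time}; combined with the uniform-in-time equi-integrability of $|u_\eps|^2$, this directly yields $\sup_t\|u_\eps(t)-u(t)\|_{L^2}\to 0$. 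Comparing to the fixed partition $u$ (rather than to $u_\eps$ at another time) is what sidesteps the injectivity issue, and the ``in-measure'' formulation is what makes the argument work without any non-degeneracy assumption on $W$. You should replace your third step with this structure.
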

	
	Note that the estimate \eqref{L2L1-estimate_derivative} and the embedding $W^{1,2}([0,T]) \hookrightarrow C^{\frac{1}{2}}([0,T])$,
	see \eqref{sobolev_into_hoelder} for a short proof for Banach space-valued functions, imply the well-known $\frac{1}{2}$-H\"older
	continuity of the volumes of the phases.
	
	The proof of this lemma makes the most detailed use of mixed spaces.
	Estimate \eqref{L2L1-estimate_derivative} is a time-localized version of the $BV$-compactness in time \eqref{BV-compactness_in_time}.
	Uniform convergence in time of $\phi_i\circ u_\eps$ then boils down to combining this estimate with the Arzel\`a-Ascoli theorem.
	However, passing this convergence to $u_\eps$ is a little delicate because we have no quantitative information about how quickly $\phi_i$ grows around $\alpha_i$.
	Consequently, we have to make do with $u_\eps$ only converging in measure uniformly in time.

	While the compactness statement, Proposition \ref{prop_comp}, did not rely on the convergence assumption \eqref{conv_ass} we will need to 
	assume it in the following, starting with the existence of the normal velocities.
      
	\begin{prop}\label{prop_normal_velocities}
	  In the situation of Proposition \ref{prop_comp}, given the convergence assumption \eqref{conv_ass}, for every $1\leq i \leq P$ the measure $\partial_t \chi_i$ is absolutely 
	  continuous w.r.t. $\left|\nabla\chi_i\right| dt$
	  and the density $V_i$ is square-integrable:
	  \begin{equation}\label{V squared}
	    \int_0^T \int V_i^2 \left|\nabla\chi_i\right| dt \lesssim E_0.
	  \end{equation}
	  Furthermore, equation \eqref{v=dtX} holds.
	\end{prop}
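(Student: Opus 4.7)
My plan is to adapt the Luckhaus-Sturzenhecker scheme to the Allen-Cahn setting by extracting a Cauchy-Schwarz inequality between the distributional time derivative of $\chi_i$ and the energy dissipation, and then to invoke Riesz representation in $L^2(|\nabla\chi_i|\,dt)$. The starting point is the chain rule of Lemma~\ref{chain rule}, which by the same Ambrosio-Dal Maso argument also applies to the Lipschitz function $\tilde\phi_i(u):=\min_{j\neq i}\phi_j(u)$. By construction $\tilde\phi_i$ composes with $u=\sum_k\chi_k\alpha_k$ to $c_i\chi_i$ with $c_i:=\min_{j\neq i}\sigma_{ij}>0$, and the bound $|\partial_u\tilde\phi_i(u)|\le\sqrt{2W(u)}$ still holds. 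Combining the chain rule with Cauchy-Schwarz in space-time and the energy-dissipation identity \eqref{energy-dissipation equality}, I obtain for every test function $\eta\in C_c^\infty((0,T)\times\torus)$
\[
\left|\int_0^T\!\int\eta\,\partial_t(\tilde\phi_i\circ u_\eps)\,dx\,dt\right|^2\le\left(\int_0^T\!\int\eta^2\,\frac{2W(u_\eps)}{\eps}\,dx\,dt\right)E_\eps(u_\eps^0),
\]
where the last factor is bounded uniformly in $\eps$ by $E_0+o(1)$.

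I then pass to the limit $\eps\downarrow 0$. The left-hand side converges to $c_i\int_0^T\!\int\eta\,\partial_t\chi_i\,dx\,dt$ after integrating by parts in time, using the $L^1$-convergence $\tilde\phi_i\circ u_\eps\to c_i\chi_i$ (obtained for $\tilde\phi_i$ by the same reasoning as in Proposition~\ref{prop_comp}). For the first factor on the right, the crucial input is the localized equi-partition of energies: combining the convergence assumption \eqref{conv_ass} with Baldo's $\Gamma$-liminf inequality applied separately to each of the non-negative summands $\tfrac\eps2|\nabla u_\eps|^2$ and $\tfrac1\eps W(u_\eps)$ forces both quantities to converge weakly as measures to $\tfrac12$ times the surface energy measure, so that $\int\eta^2\,2W(u_\eps)/\eps\,dx\,dt\to\int_0^T E(\eta^2,\chi)\,dt$. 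Passing to the limit therefore yields
\[
\left|\int_0^T\!\int\eta\,\partial_t\chi_i\,dx\,dt\right|\le c_i^{-1}\sqrt{E_0}\left(\int_0^T E(\eta^2,\chi)\,dt\right)^{1/2}.
\]

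To convert this into the sharp bound in terms of $|\nabla\chi_i|$ alone, I use that $\chi_i$ is $\{0,1\}$-valued, so the distributional time derivative $\partial_t\chi_i$ is supported on $\partial_\ast\Omega_i\times(0,T)=\bigcup_{l\neq i}\Sigma_{il}\times(0,T)$. On this set, $E(\cdot,\chi)\,dt$ restricts to $\sum_{l\neq i}\sigma_{il}\mathcal{H}^{d-1}\lfloor\Sigma_{il}\,dt$, which is equivalent to $|\nabla\chi_i|\,dt$ up to the positive constants $\sigma_{il}$. Hence the previous inequality sharpens to
\[
\left|\int_0^T\!\int\eta\,\partial_t\chi_i\,dx\,dt\right|\lesssim\sqrt{E_0}\left(\int_0^T\!\int\eta^2\,|\nabla\chi_i|\,dt\right)^{1/2}
\]
for all smooth $\eta$. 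By density, this defines a bounded linear functional on $L^2(|\nabla\chi_i|\,dt)$ of norm $\lesssim\sqrt{E_0}$, and Riesz representation delivers a density $V_i\in L^2(|\nabla\chi_i|\,dt)$ with $\partial_t\chi_i=V_i|\nabla\chi_i|\,dt$ satisfying $\int_0^T\!\int V_i^2|\nabla\chi_i|\,dt\lesssim E_0$; this identity is exactly \eqref{v=dtX}.

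The main obstacle I anticipate is justifying the localized equi-partition used in the passage to the limit: the scalar convergence \eqref{conv_ass} does not by itself rule out an oscillatory or concentration mismatch between the gradient and potential halves of the energy density, and one must combine it with pointwise-in-test-function $\Gamma$-liminf bounds for each summand to upgrade it to the measure convergence of $\int\eta^2 W(u_\eps)/\eps\,dx\,dt$. This localization step is standard but somewhat delicate, and it is the key mechanism by which the global convergence assumption produces the correct density with respect to $|\nabla\chi_i|$.
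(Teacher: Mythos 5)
Your outline reproduces the overall architecture of the paper's argument (Cauchy--Schwarz between time derivative and dissipation, equipartition of energy to pass to the limit, Riesz representation), and it contains a genuine simplification at one spot: by replacing $\phi_i$ with $\tilde\phi_i=\min_{j\neq i}\phi_j$, you obtain $\tilde\phi_i\circ u=c_i\chi_i$ \emph{exactly} at the wells, so no Fleming--Rishel/coarea slicing argument is needed to pass from $\partial_t(\phi_i\circ u)\ll E(\cdot,u)dt$ to $\partial_t\chi_i\ll E(\cdot,u)dt$. The paper instead works with $\phi_i$ and bridges the gap via a coarea argument applied slicewise in time (its Step~2); your $\tilde\phi_i$ choice eliminates that step, and since $\tilde\phi_i$ is still (locally) Lipschitz with the bound $|\partial_u\tilde\phi_i|\le\sqrt{2W}$ a.e.\ the Ambrosio--Dal Maso chain rule machinery of Lemma~\ref{chain rule} does extend verbatim. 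Also, the ``localized equipartition'' you flag as the main anticipated obstacle is precisely Lemma~\ref{lem impl conv ass} of the paper, so that step poses no real difficulty.

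However, the final step of your proposal is where the actual delicate content of the proposition lives, and you treat it as obvious when it is not. Having established $\partial_t\chi_i\ll E(\cdot,\chi)\,dt$, you claim that $\partial_t\chi_i$ ``is supported on $\partial_\ast\Omega_i\times(0,T)$'' and that on this set $E(\cdot,\chi)\,dt$ is comparable to $|\nabla\chi_i|\,dt$, because $\chi_i$ is $\{0,1\}$-valued. But being $\{0,1\}$-valued in spacetime only tells you that $\partial_t\chi_i$ is concentrated on the \emph{spacetime} reduced boundary $\partial_\ast\tilde\Omega_i$, which is not the same set as the time-parametrized spatial reduced boundary $\{(t,x):x\in\partial_\ast\Omega_i(t)\}$; a priori $\partial_\ast\tilde\Omega_i$ could have a ``horizontal'' part where the spatial slices carry no perimeter at all. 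To close the argument you must show that $\partial_t\chi_i$ gives zero mass to every interface $\Sigma_{jk}(t)\,dt$ with $i\notin\{j,k\}$. This requires combining the slicing identity $|\nabla\chi_l|_{d+1}=|\nabla\chi_l|_d\,dt$ (a higher-dimensional version of \cite[Thm.~3.103]{ambrosio2000functions}) with the decomposition of the spacetime reduced boundaries into pairwise disjoint interfaces $\tilde\Sigma_{lm}=\partial_\ast\tilde\Omega_l\cap\partial_\ast\tilde\Omega_m$ (De~Giorgi structure/\cite[Thm.~4.17]{ambrosio2000functions}), and then an explicit mutual singularity argument. This is exactly Step~3 of the paper's proof and is the step the paper itself singles out as ``somewhat delicate.'' Your proposal asserts the conclusion without carrying out this work, so the proof is incomplete at its most substantial point.
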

	
	While we previously localized the $BV$-compactness in time \eqref{BV-compactness_in_time}, for this statement we need to localize it in space.
	Unfortunately, the argument is somewhat delicate as one first proves $\partial_t \chi_i \ll E(\bullet, u) dt$ and then is forced to prove that $\partial_t \chi_i$ is singular to the ``wrong'' parts of the energy.
	
	Finally, the following lemma shows that -- up to a further subsequence -- the convergence assumption can be refined to pointwise $\ae$ in time and can be localized by a smooth test function in space.
	We furthermore argue that our convergence assumption assures equipartition of energy as $\eps\downarrow0$.
      
	\begin{lem}\label{lem impl conv ass}
	  Given $u_\eps \to u$ and the convergence assumption \eqref{conv_ass}, by passing to a further subsequence if necessary, we have
	  \begin{equation}\label{conv ass pw in t}
	    \lim_{\eps\downarrow0} E_\eps(u_\eps) = E(u)\quad \text{for }\ae \; 0\leq t \leq T
	  \end{equation}
	  and for any smooth test function $\zeta\in C^\infty(\torus)$ we have
	  \begin{equation}\label{equipartition}
	    E(\zeta, u)
	    = \lim_{\eps\downarrow0} E_\eps(\zeta,u)
	    = \lim_{\eps\downarrow0} \int \zeta  \eps \left| \nabla u_\eps\right|^2 dx
	    = \lim_{\eps\downarrow0} \int \zeta \frac2\eps W(u_\eps)\, dx
	    = \lim_{\eps\downarrow0}  \int \zeta\sqrt{2W(u_\eps)} \left| \nabla u_\eps \right| dx
	  \end{equation}
	  for $\ae \;0 \leq t \leq T.$
	\end{lem}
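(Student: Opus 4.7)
The plan is to handle the two conclusions separately. For \eqref{conv ass pw in t}, set $g_\eps(t) := E_\eps(u_\eps(t))$ and $g(t) := E(u(t))$. By Proposition \ref{prop_comp} and Fubini, for a.e.\ $t$ one has $u_\eps(t,\cdot) \to u(t,\cdot)$ a.e.\ in $\torus$, and the energy-dissipation equality \eqref{energy-dissipation equality} bounds $g_\eps(t)$ uniformly in $\eps$ and $t$. Baldo's $\Gamma$-liminf inequality then yields $\liminf_{\eps \downarrow 0} g_\eps(t) \ge g(t)$ for a.e.\ $t$, so $(g(t) - g_\eps(t))_+ \to 0$ a.e. Since $0 \le (g - g_\eps)_+ \le g \in L^1(0,T)$, dominated convergence gives $\int_0^T (g-g_\eps)_+\,dt \to 0$, and combined with the assumption $\int_0^T (g_\eps - g)\,dt \to 0$ this forces $\|g_\eps - g\|_{L^1(0,T)} \to 0$. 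Passing to a further subsequence produces the desired pointwise convergence.

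For \eqref{equipartition}, fix a $t$ in this good set. The core input is the localized $\Gamma$-liminf
\[
\liminf_{\eps \downarrow 0} \int \zeta \sqrt{2W(u_\eps)}\,|\nabla u_\eps|\, dx \;\ge\; E(\zeta, u) \qquad \text{for } \zeta \in C^\infty(\torus),\ \zeta \ge 0,
\]
which is proved by Baldo via the chain rule of Lemma \ref{chain rule} applied to the geodesic potentials $\phi_i$ and the lower semicontinuity of the total variation against the $L^1$-convergence $\phi_i \circ u_\eps \to \phi_i \circ u$ from Proposition \ref{prop_comp}. Combined with the pointwise AM--GM inequality $\frac{\eps}{2}|\nabla u_\eps|^2 + \frac{1}{\eps}W(u_\eps) \ge \sqrt{2W(u_\eps)}|\nabla u_\eps|$ this gives $\liminf_\eps E_\eps(\zeta, u_\eps) \ge E(\zeta, u)$ for nonnegative $\zeta$. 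Applying this to $M + \zeta$ and $M - \zeta$ with $M > \|\zeta\|_\infty$ and adding, the global equality $\lim_\eps E_\eps(u_\eps) = E(u)$ from step one forces both liminfs to be equalities, hence $\lim_\eps E_\eps(\zeta, u_\eps) = E(\zeta, u)$ for every smooth $\zeta$. Squeezing $C_\eps(\zeta) := \int \zeta \sqrt{2W(u_\eps)}|\nabla u_\eps|\,dx$ between the $\Gamma$-liminf and $E_\eps(\zeta, u_\eps)$ yields $\lim_\eps C_\eps(\zeta) = E(\zeta, u)$.

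The AM--GM defect for $\zeta \ge 0$ is the perfect square
\[
2 E_\eps(\zeta, u_\eps) - 2 C_\eps(\zeta) \;=\; \int \zeta \Bigl(\sqrt{\eps}\,|\nabla u_\eps| - \sqrt{2W(u_\eps)/\eps}\Bigr)^2 dx \;\longrightarrow\; 0.
\]
By Cauchy--Schwarz, together with the uniform $L^2$-bound on $\sqrt{\eps}\,|\nabla u_\eps| + \sqrt{2W(u_\eps)/\eps}$ coming from $E_\eps(u_\eps) \lesssim E_0$, the difference $\int \zeta\, \eps |\nabla u_\eps|^2\, dx - \int \zeta\, \tfrac{2}{\eps} W(u_\eps)\, dx$ (a product of the two factors) tends to zero, first for $\zeta \ge 0$ and then for general smooth $\zeta$ by writing $\zeta = (M+\zeta) - M$. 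Combined with $\int \zeta\,\eps|\nabla u_\eps|^2\,dx + \int \zeta \tfrac{2}{\eps} W(u_\eps)\,dx = 2 E_\eps(\zeta, u_\eps) \to 2 E(\zeta, u)$, each of the four quantities in \eqref{equipartition} has the same limit $E(\zeta, u)$. The main technical point is that the exceptional null set a priori depends on $\zeta$; this is handled by running the argument first on a countable dense subset of $C(\torus)$ and then extending by the uniform bound on $E_\eps(u_\eps)$.
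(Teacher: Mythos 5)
Your Step 1 is essentially the paper's Step 1 (the identity $(g-g_\eps)_+ = (g_\eps-g)_-$ makes the two arguments identical), so nothing to add there.

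For the equipartition, you take a different route from the paper, organizing everything around the single claim
\[
\liminf_{\eps\downarrow0}\int \zeta\sqrt{2W(u_\eps)}\,|\nabla u_\eps|\,dx\;\ge\;E(\zeta,u)\qquad(\zeta\ge0),
\]
and deriving the rest by AM--GM and a squeeze. That organization is clean, but there is a genuine gap in your justification of this claim. You attribute it to ``chain rule $+$ lower semicontinuity of the total variation,'' but in the multi-phase case that argument only delivers
\[
\liminf_{\eps\downarrow0}\int \zeta\sqrt{2W(u_\eps)}\,|\nabla u_\eps|\,dx\;\ge\;\max_{1\le i\le P}\int\zeta\,\bigl|\nabla(\phi_i\circ u)\bigr|,
\]
and this right-hand side is \emph{strictly smaller} than $E(\zeta,u)$ as soon as more than one interface is present. (Take three phases with all $\sigma_{ij}=1$: then $\phi_i\circ u = 1-\chi_i$, so $\int|\nabla(\phi_i\circ u)|$ sees only the two interfaces touching phase $i$, whereas $E(u)$ sees all three.) The step that bridges this gap is precisely the covering/majority-phase argument --- the partition of unity subordinate to $\B_r$ together with Baldo's supremum characterization \eqref{baldo E is supremum} --- which the paper carries out explicitly in its Step 3 and which you have not reproduced. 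That is where the whole multi-phase difficulty lives; it cannot be folded into a one-line citation.

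Two further remarks. First, you do not actually need the \emph{localized} lower bound with the cutoff $\zeta$: once the \emph{global} bound $\liminf_\eps\int\sqrt{2W(u_\eps)}|\nabla u_\eps|\,dx\ge E(u)$ is in hand, the AM--GM defect $\int(\sqrt{\eps}|\nabla u_\eps|-\sqrt{2W(u_\eps)/\eps})^2dx\to0$ has a nonnegative integrand and therefore localizes for free, and $\lim_\eps E_\eps(\zeta,u_\eps)=E(\zeta,u)$ is obtained more cheaply by the paper's layer-cake/$(\zeta,1-\zeta)$ duality, which only invokes Baldo's $\Gamma$-liminf for $E_\eps$ on open sets. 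So the stronger localized claim is both the hard part and overkill. Second, the concern at the end about the null set depending on $\zeta$ is unnecessary: the good set of times is determined by \eqref{conv ass pw in t} together with $u_\eps(t)\to u(t)$ in $L^1(\torus)$, neither of which involves $\zeta$, so no countable-dense-subset argument is needed.
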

      
      A key ingredient for this lemma to work  was already observed by Baldo, see Proposition 2.2 in \cite{baldo1990minimal}: the optimal partition energy \eqref{E} can be written as a
      (measure-theoretic) supremum using the ``primitives'' $\phi_i$ defined in \eqref{phi_i}.
      We will use this fact in the following form:
      Given $\varepsilon>0$ there exists a scale $r>0$ such that 
      \begin{align}\label{baldo E is supremum}
	\sum_{B\in \B_r} \left\{ E(\eta_B, u ) -   \max_{ 1\leq i \leq \numphases} \int \eta_{B} \left| \nabla \left( \phi_i\circ u\right) \right| \right\} \leq \varepsilon,
      \end{align}
      where $\eta_B$ is a cutoff for $B$ in the ball $2B$ with the same center but with the double radius and the covering $\B_r$ is given by
      \begin{align}\label{covering Br}
		\B_r := \left\{ B_r(i) \colon i\in \L_r \right\}
      \end{align}
      of $\torus$, where $\L_r = \torus \cap \frac r{\sqrt{d}} \Z^d $ is a regular grid of midpoints on $\torus$.
      Let us note that each summand in \eqref{baldo E is supremum} is non-negative:
      \[
       0\leq E( u,\eta_B ) -   \max_{ 1\leq i \leq \numphases} \int \eta_{B} \left| \nabla \left( \phi_i\circ u\right)\right|.
      \]

      This is the same covering as in Definition 5.1 in \cite{laux2015convergence}.
      A nice feature is that by construction, for each $n\geq 1$ and each $r>0$, the covering
	      \begin{align}\label{finiteoverlap}
	      \left\{ B_{nr}(i) \colon i \in \L_r\right\} \quad \text{is locally finite,}
	      \end{align}
	      in  the sense that for each point in $\torus$, the number of balls containing
	      this point is bounded by a constant $c(d,n)$ which is independent of $r$.
      
      We will later also apply this covering to exploit that $BV$-partitions generically only have a single, essentially flat interface on small scales, where flatness is measured by the variation of the normal, i.e.\ the tilt-excess mentioned earlier.
      This is ensured by the following fact, which is a direct consequence of \cite[Lemma 5.2 and Lemma 5.3]{laux2015convergence}.
	      \begin{lem}
	      \label{lem_approximate_normal_L2}
	      For every $\kappa>0$ and $\chi: \torus \to \{0,1\}^P$ with $\sum_{1\leq i\leq P} \chi_i = 1$,
	      there exists an $r_0>0$ such that for all $r\leq r_0$ the following holds :
	      There exist unit vectors 
	      $\nu_B\in \sph^{d-1}$ for all $B \in \B_r$ such that
	      \begin{align}\label{approximate_normal_L2}
	      \sum_{B\in\B_r}
	      \min_{i\neq j}
		\Bigg\{ 
		 \int \eta_B\left| \nu_{i} - \nu_B \right|^2 \left| \nabla \chi_{i}\right|
		+ \int\eta_B \left| \nu_{j} + \nu_B \right|^2 \left| \nabla \chi_{j}\right| +\sum_{k\notin \{i,j\}}\int\eta_B \left| \nabla \chi_k\right|
	      \Bigg\}
	      \lesssim \kappa E(\chi).
	      \end{align}
	      \end{lem}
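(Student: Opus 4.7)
\medskip

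\textbf{Plan.} The statement is pieced together from two ingredients about $BV$-partitions that Laux--Otto isolate as Lemmas 5.2 and 5.3 of \cite{laux2015convergence}. Morally, on sufficiently small balls, (i) at most two phases carry non-negligible perimeter, and (ii) the interface between those two dominant phases is almost flat in an $L^{2}$-averaged sense. My plan is to set $r_{0}$ small enough that both hold simultaneously, assign to each ball $B\in\B_{r}$ a pair $(i(B),j(B))$ of dominant phases together with a preferred direction $\nu_{B}\in\sph^{d-1}$, and then check that the quantity in \eqref{approximate_normal_L2} is bounded by the sum of the two errors, each of which is $\lesssim \kappa E(\chi)$.

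\smallskip

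First, I would pick in each $B$ the pair $(i(B),j(B))$ for which the sum of the two localized perimeters $\int\eta_{B}|\nabla\chi_{i(B)}|+\int\eta_{B}|\nabla\chi_{j(B)}|$ is largest. By De Giorgi's structure theorem, at $\mathcal{H}^{d-1}$-a.e.\ point of the reduced boundary of the partition only two phases meet; the set where three or more phases come together is $\mathcal{H}^{d-1}$-negligible. This is exactly the content needed for Lemma 5.2 of \cite{laux2015convergence}, which, provided $r\leq r_{0}(\kappa)$, yields
\begin{equation*}
\sum_{B\in\B_{r}}\sum_{k\notin\{i(B),j(B)\}}\int\eta_{B}\,|\nabla\chi_{k}|\;\lesssim\;\kappa\,E(\chi).
\end{equation*}
The finite-overlap property \eqref{finiteoverlap} of the covering is used here to pass from a global smallness statement on the measures $|\nabla\chi_k|$ to a sum over balls with a controlled multiplicative constant.

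\smallskip

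Second, for every ball $B$, I would choose $\nu_{B}$ to be (essentially) an $L^{2}$-minimizing constant direction for the pair $(i(B),j(B))$, e.g.\
\[
\nu_B \in \arg\min_{e\in\sph^{d-1}}\Bigl(\int\eta_B|\nu_{i(B)}-e|^2|\nabla\chi_{i(B)}|+\int\eta_B|\nu_{j(B)}+e|^2|\nabla\chi_{j(B)}|\Bigr).
\]
By Lemma 5.3 of \cite{laux2015convergence} (a quantitative tilt-excess estimate obtained from the fact that $\nu_{i}$ is $|\nabla\chi_i|$-approximately continuous at $\mathcal H^{d-1}$-a.e.\ point of $\partial_{*}\Omega_i$, with a blow-up equal to $-\nu_j$ on $\Sigma_{ij}$), for $r\leq r_{0}(\kappa)$ one has
\begin{equation*}
\sum_{B\in\B_{r}}\Bigl(\int\eta_{B}|\nu_{i(B)}-\nu_{B}|^{2}|\nabla\chi_{i(B)}|+\int\eta_{B}|\nu_{j(B)}+\nu_{B}|^{2}|\nabla\chi_{j(B)}|\Bigr)\lesssim \kappa\,E(\chi).
\end{equation*}
Adding the two bounds and noting that the minimum over pairs $(i,j)$ in \eqref{approximate_normal_L2} is bounded from above by the value for our specific choice $(i(B),j(B))$ completes the argument.

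\smallskip

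\textbf{Main obstacle.} Since the hard analysis is encapsulated in the two cited lemmas, the real issue is consistency of choices: one must ensure that the dominant pair identified in the first step is admissible for the tilt-excess estimate of the second step, and that both are proved for the same threshold $r_{0}$. This is handled by taking the minimum of the two scales furnished by Lemmas 5.2 and 5.3, after having fixed the dominant pair by the rule above (Lemma 5.3 in \cite{laux2015convergence} is stated precisely for such a choice). A minor subtlety is that the two terms coming from $\nu_{i(B)}$ and $\nu_{j(B)}$ need opposite signs on $\nu_{B}$; this is automatic from the half-space blow-up at $\mathcal{H}^{d-1}$-a.e.\ point of $\Sigma_{i(B) j(B)}$.
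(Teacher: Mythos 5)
Your argument is essentially the paper's: the authors do not supply a separate proof but state explicitly that the lemma is ``a direct consequence of \cite[Lemma 5.2 and Lemma 5.3]{laux2015convergence},'' which is exactly the combination you spell out. Your elaboration (fixing a dominant pair $(i(B),j(B))$ and a minimizing direction $\nu_B$ to upper-bound the minimum, then invoking the minority-phase bound and the tilt-excess bound with $r_0$ taken as the smaller of the two thresholds) is the intended reading of that citation, so the approach matches.
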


      \subsection{Proofs}
      \begin{proof}[Proof of Lemma \ref{existence}]
	\textit{Step 1: Existence via minimizing movements.}\\
	Since $\eps$ is fixed, we may set $\eps=1$ and denote the Ginzburg-Landau energy by $E$.
	For a fixed time-step size $h>0$ and $n \in \mathbb{N}$ we inductively set
	\[
	 u^n = \arg \min_u \left\{ E(u) + \frac1{2h} \int \left| u-u^{n-1}\right|^2 dx \right\}.
	\]
	The existence of minimizers $u^n$ follows from the direct method since both $E$ and the metric term $\frac1{2h} \int \left| u-u^{n-1}\right|^2 dx$ are lower semi-continuous w.r.t.\ weak convergence in $H^1$.
	Note however that some care needs to be taken in the term $\int W(u)$, as $W$ is non-convex and and the Rellich compactness theorem is applicable in the case $p \geq 2^\ast = \frac{2d}{d-2}$.
	Using the decomposition \eqref{decomposition_convex} one can still deduce lower semi-continuity in these cases as $W_{conv}$ is convex and the non-convexity in $W_{pert}$ can be treated using Rellich's compactness theorem.
	
	We interpolate these functions in a piecewise constant way: $u^h(t) := u^n$ for $t\in [nh, (n+1)h)$.
	By comparing $u^n$ to its predecessor $u^{n-1}$ we obtain for any $T=Nh$ the a priori estimate
	\[
	  E(u^h(T)) + \frac12 \int_0^{T} \int \left| \partial_t^h u^h \right|^2 dx\,dt \leq E(u^0),
	\]
	where $\partial_t^h u (t)= \frac{u(t+h) - u(t)}{h}$ denotes the discrete time-derivative of a function $u$.
	By the estimate
	\[||u^h(t+nh) - u^h(t)||_{L^2} \leq \int_t^{t+nh}\left|\left|\partial_t^h u(s) \right|\right|_{L^2}ds \leq E(u^0)^\frac{1}{2} (nh)^\frac{1}{2}\]
	for $t+(n+1)h\leq T$ one can deduce compactness:
	There exists a sequence $h\downarrow 0$ and a limiting function $u\in C^\frac{1}{2}([0,T]; L^2(\torus; \R^N))$ such that
	\[
	  u^h \to u  \text{ in } L^2(\torus) \text{ for all times } 0\leq t \leq T
	\]
	and furthermore
	\[
	\sup_t \int \left| \nabla u\right|^2 dx, \quad \int_0^T \int \left| \partial_t u \right|^2 dx\,dt <\infty.
	\]
	
%
	
	We want to pass to the limit $h\downarrow0$ in the the Euler-Lagrange equation
	\[
	\int_0^T \int \partial_t^{-h} \xi \cdot u^h  +\Delta \xi \cdot  u^h \,dx\,dt = \int_0^T \int \xi \cdot \partial_u W(u^h) \,dx\,dt
	\]
	for all test vector fields $\xi \in C_0^\infty((0,T)\times \torus, \R^N)$.
	By the pointwise convergence we have
	\[ 
	  \partial_u W(u^h) \to \partial_u W(u) \quad a.e.
	\]
	By the polynomial growth conditions \eqref{growth} and \eqref{growth_derivative} of $W$ we have
	\[|\partial_u W(u^h)|^{\frac{p}{p-1}} \lesssim |u^h|^p,\]
	which implies that $\sup_h ||\partial_u W(u^h)||_{L^{\frac{p}{p-1}}} < \infty$.
	Thus the sequence $|\partial_u W(u^h)|$ is equi-integrable, which implies that $\partial_u W(u^h) \to \partial_u W(u)$ in $L^1$.

	\textit{Step 2: We have $\partial_i\partial_j u , \partial_u W(u) \in L^2$.}\\
	We provide a formal argument which can easily be turned into a rigorous proof by considering discrete difference quotients instead of their limits.
	Differentiating the equation in the $i^{\text{th}}$ coordinate direction for $1\leq i\leq d$ gives
	\[\partial_t \partial_i u - \Delta \partial_i u = - \partial_u^2W(u)\partial_i u.\]
	By multiplying the equation with $\partial_i u$ and integrating we find
	\[ \frac12 \int |\partial_i u(T)|^2 dx + \int_0^T \int |\partial_i \nabla u|^2 dx\,dt = \frac12 \int|\partial_i u(0)|^2 dx - \int_0^T \int \partial_i u \cdot \partial^2_u W(u) \partial_i u \,dx\,dt.\]
	The second right-hand side term has two contributions, one from $W_{conv}$ and one from $W_{pert}$, see \eqref{decomposition_convex}.
	The contribution due to $W_{conv}$ is negative by convexity.
	The contribution coming from $W_{pert}$ is controlled by
	\[\int_0^T \int |\partial_i u|^2 dx\,dt\]
	because $W_{pert}$ has bounded second derivative.
	Thus we get $\partial_i\partial_j u \in L^2([0,T]\times \torus)$.
	As $\partial_t u$ is in the same space, a quick look at the PDE \eqref{allen cahn} reveals that $\partial_u W(u)$ is as well.
	
	Finally, the equality \eqref{energy-dissipation equality} follows from integrating the outcome of the computation in Remark \ref{rmk dE/dt formal computation} from $0$ to $T$.
	
	\end{proof}

      \begin{proof}[Proof of Proposition \ref{prop_comp}]
%
%
	Plugging the a priori estimate \eqref{energy-dissipation equality} into the estimates \eqref{L1_estimate_space}, \eqref{BV_estimate_space} and \eqref{BV_estimate_time} of Lemma \ref{chain rule} we see that
	\[\sup_{\eps} \int_0^T   \int | \phi\circ u_\eps| + |\nabla (\phi_i \circ u_\eps)| + |\partial_t (\phi_i \circ u_\eps)|\, dx \,dt < \infty.\]
	By the Rellich compactness theorem, we thus find a subsequence $\eps\downarrow 0$ and a function $v\colon (0,T)\times \torus \to \R$ such that
	\begin{align}\label{phi to v}
	\phi_i(u_\eps) \to v\quad \text{in } L^1([0,T]\times \torus).
	\end{align}
      \textit{Step 1: The limit $v$ takes the form $\sum_j \phi_i(\alpha_j)\chi_j$ and the functions $u_\eps$ converge to $\sum_j \chi_j\alpha_j$.}\\
	The convergence of $u_\eps$ to $\sum_j \chi_j\alpha_j$ is a part of the classical $\Gamma$-limit result \cite{baldo1990minimal}.
	However, we take this opportunity to provide a clarification of the previously known argument.
	
	After passing to another subsequence we can assume that the sequence $u_\eps$ generates a Young measure $p_{t,x}$.
	We note that
	\[
	  \int_0^T \int W(u_\eps) \,dx\,dt \to 0
	\]
	implies that $u_\eps$ tends to the zeros of $W$ in measure: For any $\delta>0$ we have
	\[
	  \left| \left\{ (x,t) \colon \operatorname{dist}(u_\eps, \{\alpha_1 , \dots, \alpha_\numphases \}) \geq \delta\right\} \right| \to 0 \quad \text{as } \eps\to 0.
	\]
	Hence also the Young measure concentrates and we get
	\[p_{t,x} = \sum_{1\leq j \leq P} p_{t,x}(\alpha_j) \delta_{\alpha_j}.\]
	From this estimate we also get that no mass escapes to infinity, i.e.\ $\sum_{1\leq j\leq P} p_{t,x}(\alpha_j) =1$.
	
	By \eqref{phi to v} for all $f \in C_c(\R)$ also $f\circ \phi_i (u_\eps)$ is strongly compact in $L^1$.
	Therefore Young measure theory gives us the following (a.e.) identities:
	\begin{align}
	v & = \lim \phi_i(u_\eps)  = \sum_{1\leq j \leq P} \phi_i(\alpha_j) p_{t,x}(\alpha_j), \label{Young_measure_phi}\\
	f\left(\sum_{1\leq j \leq P} \phi_i(\alpha_j) p_{t,x}(\alpha_j)\right) & = \lim f(\phi_i(u_\eps))  = \sum_{1\leq j \leq P} f(\phi_i(\alpha_j)) p_{t,x}(\alpha_j).\notag
	\end{align}
	If we take $f$ to be uniformly convex on the interval $[0, \max_{1\leq j \leq P} \phi_i(\alpha_j)]$ we see from the equality statement in Jensen's inequality that
	\[\phi_i(\alpha_j) \equiv \sum_{1\leq k \leq P} \phi_i(\alpha_{k}) p_{t,x}(\alpha_{k}) \quad p_{t,x} \text{-almost surely.}\]
	For $(t,x)\in [0,T]\times \torus$ (up to a set of measure zero) let $\alpha_j$ be such that $p_{t,x}(\alpha_j)>0$.
	We then get that
	\[0 = \phi_j (\alpha_j) = \sum_{1\leq k \leq P} \phi_j(\alpha_{k}) p_{t,x}(\alpha_{k}) = \sum_{k \neq j} \phi_j(\alpha_{k}) p_{t,x}(\alpha_{k}).\]
	Since $\phi_j(\alpha_k)>0$  for $k\neq j$ we have $p_{t,x}(\alpha_k)=0$.
	Thus we get $p_{t,x}(\alpha_j)=1$.
	Setting $\chi_i(t,x) := p_{t,x}(\alpha_i)$ and inserting this definition into equation \eqref{Young_measure_phi} proves the decomposition of $v$.
	
	In order to get pointwise \ae convergence of $u_\eps$, note that since the Young measures concentrate, we get that $u_\eps \to \sum_j \chi_j \alpha_j$ in measure.
	By passing to a subsequence, we can upgrade this to pointwise almost everywhere convergence.
	
      \textit{Step 2: $\chi_i \in BV$.}\\
	A similar claim is proven to be true in Prop. 2.2 in \cite{baldo1990minimal}.
	For the convenience of the reader and later refinement we reproduce the proof.
	
	Applying the Fleming-Rishel coarea formula in space and time we see for each $1\leq i\leq P$ that
	\begin{align*}
	  ||(\partial_t, \nabla) \phi_i \circ u_\eps ||_{TV}
	  = & \int_{-\infty}^\infty \mathcal{H}^{d}\left(\partial_\ast \{(t,x):\phi_i\circ u_\eps (t,x) \leq s\}\right) ds \\
	  \geq & \int_0^{d_i} \mathcal{H}^{d}\left(\partial_\ast \{(t,x):\phi_i\circ u_\eps (t,x) \leq s\}\right) ds\\
	  = & d_i ||(\partial_t,\nabla) \chi_i||_{TV},
	\end{align*}
	where we define $d_i:= \min_{1\leq j \leq P, i\neq j} d_W(\alpha_i, \alpha_j)$.
	Thus $\chi_i \in BV([0,T]\times\torus)$.
	
	For the statement $||E(\chi)||_{L^\infty([0,T])} \leq E_0$ we refer the reader to the proof of the $\Gamma - \liminf$ inequality in \cite{baldo1990minimal}
	and the energy-dissipation equality \eqref{energy-dissipation equality}.
	
	Finally, recalling Remark \ref{compactness_for_forces} we notice that the Allen-Cahn Equation only played into the argument via the energy-dissipation estimate \eqref{energy-dissipation equality}.
      \end{proof}

      \begin{proof}[Proof of Lemma \ref{chain rule}]
	\textit{Step 1: The chain rule holds if $u$ additionally is bounded in space and time.}\\
	In this case $\phi_i$ is in fact Lipschitz continuous on the image of $u$.
	By the following Theorem \ref{chain rule, general} due to Ambrosio and Dal Maso we know that the chain rule is valid for the pair $D(\phi_i|_{T_{t,x}})$ and $(\partial_t,\nabla) u$, where $\dot T_{t,x}:=\operatorname{span}\left(\{\partial_1 u, \ldots, \partial_d u, \partial_t u\}\right)$
	and $T_{t,x} := u (t,x) + \dot T_{t,x}$:
	
	\begin{thm}[Ambrosio, Dal Maso \cite{ambrosio1990general}; Corollary 3.2]\label{chain rule, general}
	  Let $\Omega \subset \R^d$ be an open set.
	  Let $p \in [1,\infty]$, $u \in W^{1,p}(\Omega;\R^N)$, and let $f:\R^N \to \R^k$ be a Lipschitz continuous function such that $f(0)=0$.
	  Then $v := f\circ u \in W^{1,p}(\Omega;\R^k)$.
	  Furthermore, for almost every $x\in \Omega$ the restriction of the function $f$ to the affine space
	  \[T_x^u := \left\{y\in \R^n: y= u(x)+ (z\cdot D)u \text{ for some } z \in \R^d\right\}\]
	  is differentiable at $u(x)$ and
	  \[D v = D (f|_{T_x^u})(u)D u \quad \ae \text{ in } \Omega.\]
	\end{thm}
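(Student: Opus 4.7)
The plan is to establish, first, that $v = f\circ u$ lies in $W^{1,p}(\Omega;\R^k)$ with a distributional gradient satisfying $|Dv| \leq \lip(f)\,|Du|$, and only then to refine this by identifying $Dv$ as the specific expression $D(f|_{T_x^u})(u)\,Du$. The first part is standard: since $f(0)=0$ and $f$ is $L$-Lipschitz, $|v|\leq L|u|$, so $v\in L^p$; the ACL characterization of Sobolev functions together with the Lipschitz bound $|f(u(x+h))-f(u(x))|\leq L|u(x+h)-u(x)|$ yields $v\in W^{1,p}$ with the pointwise bound $|Dv|\leq L|Du|$ almost everywhere. The real content of the theorem is the explicit representation of $Dv$.

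To get the representation, I would approximate $u$ by smooth $u_n \to u$ in $W^{1,p}$ and $f$ by mollifications $f_\delta \to f$ locally uniformly. The classical chain rule gives $D(f_\delta\circ u_n) = Df_\delta(u_n)\,Du_n$. Standard Meyers--Serrin arguments combined with the uniform Lipschitz bound let me pass to the limit in the left-hand side weakly in $L^p$. For the right-hand side, what is needed is that for almost every $x$, along the diagonal $(n,\delta)$ the matrix $Df_\delta(u_n(x))$ converges, when applied to $Du(x)$, to the correct tangential derivative. This is the crux of the proof.

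The core lemma is that for $\mathcal{L}^d$-a.e.\ $x\in\Omega$, the restriction $f|_{T_x^u}$ is differentiable at $u(x)$. I would prove this by stratifying $\Omega$ according to the rank $r=\operatorname{rank} Du(x)$. On the stratum $\Omega_r$, a standard Lusin-type decomposition covers $\Omega_r$ up to a null set by Borel sets $E_j$ on which $u$ is Lipschitz and $u|_{E_j}$ is bi-Lipschitz onto an $r$-dimensional Lipschitz graph $M_j\subset\R^N$, whose approximate tangent plane at $u(x)$ coincides with $T_x^u$ for a.e.\ $x\in E_j$. Rademacher applied intrinsically to $f|_{M_j}$ (via bi-Lipschitz charts) produces a set $N_j\subset M_j$ with $\mathcal{H}^r(N_j)=0$ off of which $f|_{M_j}$ is differentiable. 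The area formula then guarantees that $(u|_{E_j})^{-1}(N_j)$ is $\mathcal{L}^d$-null, and for $x\in E_j\setminus(u|_{E_j})^{-1}(N_j)$ the tangent plane of $M_j$ at $u(x)$ equals $T_x^u$, so $f|_{T_x^u}$ is differentiable at $u(x)$ with derivative $D(f|_{M_j})(u(x))$. Summing over the countable covering and stratification completes the a.e.\ statement. With this in hand, the diagonal limit of $Df_\delta(u_n)\,Du_n$ can be identified: on the set where $f$ is differentiable at $u(x)$ the limit is the classical $Df(u(x))\,Du(x)$, while on the complement the tangential derivative $D(f|_{T_x^u})(u(x))$ still makes sense and agrees with the limit when composed with $Du(x)$, since only the action of the derivative on $\operatorname{range}Du(x)$ enters.

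The main obstacle is exactly this area-formula step: $f$ may well fail to be differentiable on a subset of $\R^N$ of positive Lebesgue measure containing most values of $u$, so there is no way to pull back differentiability of $f$ from the ambient space. The rank stratification together with bi-Lipschitz Lusin decomposition is what allows one to replace ``differentiability of $f$ at $u(x)$'' by the intrinsically lower-dimensional notion of differentiability of $f|_{T_x^u}$ at $u(x)$, and the area formula is what turns an $\mathcal{H}^r$-null obstruction on the image into an $\mathcal{L}^d$-null obstruction in the domain. Everything else (the $W^{1,p}$ bound, the smooth approximation, the final limit passage) is routine once this key geometric measure-theoretic fact is established.
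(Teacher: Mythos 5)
First, note that the paper does not prove this statement: it is quoted verbatim from Ambrosio and Dal Maso as a black box inside the proof of Lemma \ref{chain rule}, so you are supplying a proof where the paper supplies only a citation. Your first part ($v\in W^{1,p}$ with $|Dv|\le \lip(f)\,|Du|$ via the ACL characterization) is fine, and reducing the theorem to the key lemma that $f|_{T_x^u}$ is differentiable at $u(x)$ for a.e.\ $x$ is the right reduction. The problem is your proof of that key lemma.

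On a stratum $\Omega_r$ with $1\le r<d$ the decomposition you invoke cannot exist: a bi-Lipschitz image of a set $E_j\subset\R^d$ of positive Lebesgue measure has Hausdorff dimension $d$, so $u|_{E_j}$ cannot be bi-Lipschitz onto an $r$-dimensional Lipschitz graph. Nor does the weaker statement that would still let your argument run -- that $u(\Omega_r)$ is covered by countably many $r$-dimensional Lipschitz graphs up to a set whose preimage is $\mathcal{L}^d$-null -- hold: take $d=N=2$ and $u(x,y)=(g(x),y)$ with $g$ Lipschitz, $g'=0$ on a fat Cantor set $K$ and $g(K)$ uncountable; then the rank-$1$ stratum contains the positive-measure set $K\times(0,1)$, whose image $g(K)\times(0,1)$ is a union of uncountably many disjoint vertical segments and hence is not even $\sigma$-finite with respect to $\mathcal{H}^1$. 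This reflects the general failure of Sard-type theorems for Lipschitz (and even $C^1$) maps, and it means the pivotal step ``the area formula turns an $\mathcal{H}^r$-null obstruction on the image into an $\mathcal{L}^d$-null obstruction in the domain'' has nothing to act on when $r<d$: there is no $r$-Jacobian bounded below and no coarea formula of the required form. Your argument does work verbatim on the full-rank stratum $r=d$ (that is the standard Lusin--area-formula--Rademacher argument), but the degenerate strata are precisely what the restricted-differentiability formulation is designed for, and they are why Ambrosio and Dal Maso argue differently: they prove the a.e.\ differentiability of $f|_{T_x^u}$ at $u(x)$ directly by analyzing the difference quotients $t^{-1}\bigl(f(u(x)+t\,Du(x)z)-f(u(x))\bigr)$, using one-dimensional Rademacher along lines in state space together with a maximal-function/Fubini argument over the measurably varying subspaces, with no rectifiability of the image of $u$ entering at all. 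Your final limit passage through $Df_\delta(u_n)\,Du_n$ inherits the same gap, since identifying the weak limit on the degenerate strata requires exactly the key lemma.
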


	Let $\Pi(t,x)$ be the orthogonal projection in $\R^N$ onto the subspace $\dot T_{t,x}$ and let
	\[\partial_u \phi_i(u)(t,x) v:= D \big(\phi_i|_{T_{t,x}}\big)(u(t,x)) \Pi(t,x) v.\]
	Due the obvious fact that $\Pi(t,x) \nabla u(t,x) = \nabla u(t,x)$ the chain rule still holds for $\partial_u \phi_i(u)$ and $(\partial_t, \nabla) u$.
	Let $(t,x)$ be a point such that $\phi_i|_{T_{t,x}}$ is differentiable in $u:=u(t,x)$, let $v \in \dot T_{t,x}$ and $h>0$.
	Using the triangle inequality of $d$ and comparing the length of geodesics to straight lines we get
	\[|\phi_i(u+hv)-\phi_i(u)|\leq d_W(u+hv,u) \leq \int_0^1 \sqrt{2W(u+thv)}h|v|\, dt.\]
	Continuity of $W$ implies that we can pass to the limit $h \to 0$ to get
	\[\left|D \phi_i|_{T_{t,x}}(u)v\right| \leq \sqrt{2W(u)}|v|,\]
	which for all vectors of the form $v = \Pi(t,x) \tilde v$ for some $\tilde v \in \R^N$ gives
	\[\left|\partial_u \phi_i(u)\right| \leq \sqrt{2W(u)}.\]
	
	\textit{Step 2: The lemma holds for general functions $u$ with bounded energy and controlled dissipation.}\\
	The idea is to approximate $u$ with bounded functions.
	Let $M>0$ and let $u_{M,j} := \sign(u_j)  \left( M \wedge |u_j| \right) $ for all $1\leq j\leq N$ be the componentwise truncation of $u$.
	We then know that $u_M \to u$ pointwise almost everywhere, which implies $\phi_i(u_M) \to \phi_i(u)$ pointwise almost everywhere.
	Next, we will strengthen this to $L^1$ convergence by finding an integrable dominating function.
	
	By the triangle inequality for $d$ we get for all $v \in \R^N$ that
	\begin{align}\label{triangle inequality for geodesic distance}
	  \phi_i(v) \leq d_W(\alpha_i,0) + d_W(0,v),
	\end{align}
	so that it is sufficient to consider $d_W(0,v)$.
	By the growth condition \eqref{growth} on $W$ we see
	\begin{align}\label{growth_of_phi}
	 d_W(0,v) \leq \int_0^1 \sqrt{2W(sv)}|v| ds \lesssim |v| + |v|^{\frac{p}{2}+1} \lesssim 1 + |v|^p
	\end{align}                                                                                                    
	for all $v\in \R^N$.
	Thus we have
	\[\phi_i\left(u_M\right) \lesssim 1 + \left|u_M\right|^p \leq 1 + \left|u\right|^p\]
	and we only need to prove $L^p$-boundedness of $u$.
	This is a straightforward consequence of the coercivity assumption \eqref{growth} and boundedness of the energy, as for almost all times $0\leq t\leq T$ we have
	\begin{equation}\label{u goes to zeros}
	\sup_{0\leq t\leq T} \int |u|^p dx  \overset{\eqref{growth}}{\lesssim} \sup_{0\leq t\leq T} \int 1+W(u) dx \lesssim 1 + \sup_{0\leq t\leq T} \eps E_\eps(u).
	\end{equation}
	
	Thus we can apply Lebesgue's dominated convergence theorem to see that $\phi_i (u_M) \to \phi_i (u)$ in $L^1$.
	Consequently, we have that
	\[(\partial_t, \nabla)(\phi_i \circ u_M) \to (\partial_t, \nabla)(\phi_i \circ u) \]
	as distributions.
	
	Note that estimates \eqref{triangle inequality for geodesic distance}, \eqref{growth_of_phi} and \eqref{u goes to zeros} imply the $L^1$ estimate \eqref{L1_estimate_space} we claimed to hold in the statement
	of the lemma.
	
	By an elementary property of weakly differentiable functions we have that
	\[(\partial_t,\nabla) u_{M,j} = (\partial_t,\nabla) u_{j} \text{ \ae on } \left\{u_{M,j} = u_{j}\right\}.\]
	Since the sets $\left\{u_M = u\right\}$ are non-decreasing in $M$ we see that $\left|\left\{u_M \neq u , (\partial_t, \nabla)u_M \neq (\partial_t,\nabla)u\right\}\right| \to 0$.
	Because the definition of $\partial_u \phi_i$ only depends on the values of the pre-composed function and its derivatives, we see that $\partial_u \phi_i(u_M)$ eventually becomes stationary almost everywhere.
	We denote the limit by $\partial_u \phi_i(u)$.
	Furthermore, we still have
	\[|\partial_u \phi_i(u)| \leq \sqrt{2W(u)} \quad \text{a.e.},\]
	which proves \eqref{Lipschitz estimate}.
	Finally, to check the chain rule all remains to be seen is that \[\partial_u \phi(u_M) (\partial_t,\nabla)u_M \to \partial_u \phi(u) (\partial_t, \nabla) u\]
	in $L^1$.
	This follows by dominated convergence from the above pointwise convergences and the following widely known application of Young's inequality
	\[
	  \left|\partial_u \phi(u_M) \nabla u_M \right|  \leq \sqrt{2W(u_M)}\left|\nabla u_M\right|  \lesssim  \sqrt{2W(u)}| \nabla u|  
	  \leq  \frac{\eps}{2} |\nabla u|^2 +  \frac{1}{\eps} W(u)
	\]
	for the spacial gradient and, similarly,
	\begin{equation}\label{BV-compactness_in_time}
	  \left|\partial_u \phi(u_M) \partial_t u_M \right|  \leq \frac{\eps}{2} |\partial_t u|^2 +  \frac{1}{\eps} W(u) 
	\end{equation}
	as the right hand side is integrable in space and time by assumption.
	Note that both inequalities also imply
	\begin{align*}
	 \sup_{0\leq t \leq T} \int |\nabla \phi_i \circ u|  dx & \lesssim \sup_{0\leq t\leq T} E_\eps(u), \\
	 \int_0^T \int  |\partial_t \phi_i \circ u| dx dt & \lesssim T \sup_{0\leq t\leq T} E_\eps(u) + \int_0^T  \int \eps |\partial_t u|^2 dx \, dt,
	\end{align*}
	which provides the bounds \eqref{BV_estimate_space} and \eqref{BV_estimate_time}.
      \end{proof}

      \begin{proof}[Proof of Lemma \ref{Upgraded_compactness}]
      \textit{Step 1: We have $\phi_i \circ u_\eps \in W^{1,2}([0,T];L^1(\torus))$.}\\
	The fact that $\phi_i\circ u_\eps \in L^2([0,T];L^1(\torus))$ is an immediate consequence of estimate \eqref{L1_estimate_space} of Lemma \ref{chain rule}.
	For the estimate on the derivative we localize the previous estimate for $\partial_t (\phi_i \circ u_\eps)$ in time.
	Let $\zeta \in L^2([0,T])$ be non-negative.
	Using the chain rule \eqref{Dphi}, the Lipschitz estimate \eqref{Lipschitz estimate} and the Cauchy-Schwarz inequality in the spacial integral, we obtain
	\begin{align*}
	  \int_0^T \zeta \int | \partial_t (\phi_i\circ u_\eps)| dx\, dt & \leq \int_0^T \zeta \int \sqrt{2W(u_\eps)}|\partial_t u_\eps| dx \, dt \\
	  & \leq \int_0^T \zeta \left(2 \int \frac{1}{\eps} W(u_\eps) dx \right)^\frac{1}{2} \left(\int \eps |\partial_t u_\eps|^2 dx \right)^\frac{1}{2} dt.
	\end{align*}
	Applying the energy dissipation estimate \eqref{energy-dissipation equality} and the Cauchy-Schwarz inequality in time we arrive at
	\[ \int_0^T \zeta \int | \partial_t (\phi_i\circ u_\eps)| dx\, dt \lesssim E_\eps(u_\eps(0))\left(\int_0^T \zeta^2 dt\right)^\frac{1}{2}.\]
	Optimizing in $\zeta$ with $||\zeta||_{L^2} =1$ gives the $L^2_tL^1_x$-estimate \eqref{L2L1-estimate_derivative}.
	
      \textit{Step 2: The sequence $\phi_i \circ u_\eps$ is pre-compact in $L^\infty([0,T];L^1(\torus))$.}\\
	Due to a version of the Fundamental Theorem of Calculus for the Bochner integral, cf.\ Chapter 5.9, Theorem 2 in \cite{evansbigbook},
	  we know for almost every $s,r \in [0,T]$ with $s\leq r$ that
	\[\phi_i \circ u_\eps(r) -\phi_i \circ u_\eps(s) = \int_s^r \partial_t (\phi_i\circ u_\eps)(t) dt.\]
	Consequently, the Cauchy-Schwarz inequality gives
	\begin{equation}\label{sobolev_into_hoelder}
	 \int |\phi_i \circ u_\eps(r) -\phi_i \circ u_\eps(s) | dx \leq \int_s^r \int |\partial_t (\phi_i \circ u_\eps)(t)| dx\, dt \lesssim (r-s)^\frac{1}{2}\left(E_\eps(u_\eps^0)\right)^\frac{1}{2}.
	\end{equation}
	By estimate \eqref{BV_estimate_space} we also know that
	\[\esssup_{0\leq t \leq T} \int | \nabla (\phi_i \circ u_\eps)| dx \lesssim 1+ E_\eps(u_\eps^0).\]
	Since $\sup_\eps E_\eps(u_\eps^0) < \infty$ we consequently know that (a modification of) $\phi \circ u_\eps$ is equi-continuous in $C([0,T];L^1(\torus))$.
	Additionally, lower semi-continuity of the $BV$-norm and the compact Sobolev embedding of $W^{1,1}$ into $L^1$ implies that for all times $t \in [0,T]$ the maps $\phi_i\circ u_\eps(t)$ are pre-compact in $L^1(\torus)$.
	The Arzel\`a-Ascoli theorem then gives the claim.
	
      \textit{Step 3: The sequence $u_\eps$ converges to $\sum_i \chi_i \alpha_i$ in measure uniformly in time.}\\
	By $d_W(\alpha_i,\alpha_i)=0$ for all $1\leq i \leq P$ and Step 2 we get
	\[\esssup_{0\leq t \leq T} \sum_{i=1}^P \int d_W(\alpha_i, u_\eps(t,x)) \chi_i dx \leq \esssup_{0\leq t \leq T} \sum_i \int |d_W(\alpha_i, u_\eps(t,x)) - d_W(\alpha_i, u(t,x))| dx \to 0.\]
	For every $\delta >0$ and $1\leq i \leq P$ we have by continuity of the map $v \to d_W(\alpha_i, v)$ that
	\[\min\left\{d_W(\alpha_i, v); v \in \R^N\text{, } |v-\alpha_i|\geq \delta \right\} >0.\]
	As a result we get essentially uniform in time convergence in measure, i.e.\ for every $\delta >0$ we have
	\begin{equation}\label{uniform_convergence_in_measure}
	  \esssup_{0\leq t \leq T } \left|\left\{\left|u_\eps - \sum_{i=1}^P\chi_i \alpha_i\right| \geq \delta\right\}\right| \to 0.
	\end{equation}
	Since $u_\eps$ is continuous in time, we can replace the essential supremum by a ``true'' supremum. 
	
      \textit{Step 4: The sequence $u_\eps^2$ is equi-integrable uniformly in time.}\\
	If $p>2$, then this follows immediately from the uniform $L^p$ bound \eqref{u goes to zeros} of $u_\eps$ we proved in Lemma \ref{chain rule} by an application of the H\"older inequality:
	For any measurable set $A \subset \torus$ and any $\eps>0$ we have
	\begin{equation}\label{uniform_equiintegrability}
	 \sup_{0\leq t\leq T}\int_A u_\eps^2(t,x) dx \leq \sup_{0\leq t\leq T} |A|^\frac{2}{p'} \left(\int |u_\eps|^p dx\right)^\frac{2}{p} \lesssim |A|^\frac{2}{p'} \left(1 + E_\eps(u_\eps^0)\right)^\frac{2}{p}.
	\end{equation}
	As $E_\eps(u_\eps^0)$ is bounded uniformly in $\eps$, we get the statement.
	
	If $p=2$ we get some slightly better integrability from a Sobolev embedding:
	Let $G(u):= \left(|u| - R \right)_+^2$, where $R>0$ is the radius from the growth condition \eqref{growth} of $W$. 
	This function is $C^1$ with 
	\[\partial_u G(u) = 2\left(|u|-R\right)_+\frac{u}{|u|} \chi_{|u|>R}\]
	and thus satisfies the same bounds as $\phi_i$, see \eqref{growth_of_phi} and \eqref{Lipschitz estimate}, namely
	\[G(u) \leq  |u|^2\text{, } |\partial_u G(u)|\leq |u|.\]
	Consequently, we can use the same approximation argument as in Lemma \ref{chain rule} to see that
	\[\sup_{\eps>0} \sup_{0\leq t\leq T} ||G\circ u_\eps(t)||_{W^{1,1}} <\infty.\]
	The Sobolev embedding theorem can thus be applied to conclude
	\[\sup_{\eps>0} \sup_{0\leq t\leq T} ||G\circ u_\eps(t)||_{L^{\frac{d}{d-1}}}<\infty.\]
	Recalling the definition of $G$ we see that this implies
	\[\sup_{\eps>0} \sup_{0\leq t\leq T} ||u_\eps(t)||_{L^{2\frac{d}{d-1}}}<\infty,\]
	from which we deduce the necessary equi-integrability of $|u_\eps|^2$ as before.
	
      \textit{Step 5: The sequence $u_\eps$ converges in $C([0,T];L^2(\torus))$.}\\
	Essentially, we wish to exploit the fact that convergence in measure and equi-integrability are equivalent to convergence in $L^1$.
	However, since we want the convergence to be uniform in time and instead of $L^1$ convergence we want $L^2$ convergence in space, we quickly reproduce the argument.
	
	For any cut-off $M>0$ we can split the integral
	\[ \int |u_\eps - u|^2 dx    = \int_{\{|u_\eps - u| \geq M\}} |u_\eps -u|^2 dx + \int_{\{|u_\eps - u|< M\}} |u_\eps - u|^2 dx.\]
	The first term on the right-hand side satisfies 
	\[\sup_{0\leq t \leq T} \int_{\{|u_\eps - u| \geq M\}} |u_\eps - u|^2 \lesssim  \sup_{0\leq t \leq T}\int_{\{|u_\eps - u| \geq M\}} (|u_\eps|^2 +1)\,dx \to 0\quad \text{as } \eps\to 0\]
	by applying uniform convergence in measure \eqref{uniform_convergence_in_measure} and uniform equi-integrability \eqref{uniform_equiintegrability}.
	For every $\delta >0$ the second term on the right hand side can be estimated by
	\[
	  \sup_{0\leq t \leq T} \int \min\left\{|u_\eps - u|^2, M^2\right\} dx
	  \leq  \sup_{0\leq t \leq T}  \Lambda^d \delta^2 + \left|\left\{\left|u_\eps -u\right| >\delta\right\}\right|M^2 \to \Lambda^d \delta^2, \quad \text{as } \eps \to 0.
	\]
	Taking first  $\eps\to0$ and then $\delta\to0$ we have indeed
	\[\lim_{\eps \to 0} \sup_{0\leq t \leq T}  \int |u_\eps - u|^2 dx =0.\qedhere\]      
      \end{proof}

      \begin{proof}[Proof of Proposition \ref{prop_normal_velocities}]
	The strategy is the following:
	\begin{enumerate}
	\item We prove the easier fact $\partial_t (\phi_i\circ u) \ll E(\bullet, u) dt$.
	\item We replace $\phi_i \circ u$ with $u$, i.e.\ we prove $\partial_t u \ll E(\bullet, u) dt$, using a suitable localization of Step 4 of the proof 
	of Proposition \ref{prop_comp}, i.e.\ the Fleming-Rishel coarea formula.
	\item We prove that $\partial_t \chi_i$ is singular to the ``wrong'' parts of $E(\bullet, u)dt$ in order to replace the right-hand side with
	$|\nabla \chi_i|dt$.
	\end{enumerate}
	Equation \eqref{v=dtX} immediately follows.
	
      \textit{Step 1: For all $1\leq i \leq P$ we have $\partial_t (\phi_i\circ u) \ll E(\bullet, u) dt$ and the corresponding density is square-integrable w.r.t.\ $E(\bullet, u)dt$.}\\
	We localize with a smooth test function $\zeta\in C_0^\infty((0,T)\times \torus; \R^{1+d})$ and use the chain rule \eqref{chain rule}, the Lipschitz estimate \eqref{Lipschitz estimate} and the Cauchy-Schwarz inequality to obtain
	\begin{align}\label{proof of V2 1}
	  \int_0^T \int \partial_t \phi_i(u_\eps)  \zeta \,dx\,dt
	  \leq \left( \int_0^T \int \eps \left|\partial_t u_\eps\right|^2dx\,dt\right)^{\frac{1}{2}}
	  \left(\int_0^T\int  \zeta^2 \frac{2}{\eps} W(u_\eps)\,dx\,dt\right)^{\frac{1}{2}}.
	\end{align}
	By the convergence \eqref{phi eps to phi} of the composition and the equipartition of energy \eqref{equipartition} we can pass to the limit in this inequality and obtain
	\begin{equation}\label{proof of V2 2}
	\int_0^T \int \phi_i(u) \partial_t  \zeta \,dx\,dt \leq  \left(\liminf_{\eps\downarrow0}  \int_0^T \int \eps \left|\partial_t u_\eps\right|^2dx\,dt\right)^{\frac{1}{2}}
	\left(\int_0^T E_\eps (\zeta^2, u) dt\right)^{\frac{1}{2}}.
	\end{equation}
	By equation \eqref{energy-dissipation equality} the first factor on the right-hand side is controlled by $\sqrt{E_0}$.
	From this we see that indeed $\left| \partial_t(\phi_i\circ u)\right| \ll E(\bullet,u) dt$ and by taking the 
	supremum over the test functions $\zeta$ we see that the density is square-integrable.
	
      \textit{Step 2: We have $d_i |\partial_t \chi_i| \leq |\partial_t (\phi_i \circ u)|$ where $d_i:= \min_{1\leq j \leq P, i\neq j} d_W(\alpha_i, \alpha_j)$.}\\
	Basically, we want to use the argument of Step 4 in the proof of Proposition \ref{prop_comp} for the partial derivative $\partial_t \chi_i$.
	This can be done by combining the slicing theorem, cf.\ Theorem 3.103 in \cite{ambrosio2000functions},  and with the previous argument at almost each point $x \in \torus$, which leads to
	\[d_i |\partial_t \chi_i|(U) \leq |\partial_t (\phi_i \circ u)|(U)\]
	for all open sets $U \subset [0,T]\times \torus$.
	This implies that for all $\xi \in C_c([0,T]\times\torus; [0,\infty)$ we have the inequality
	\[d_i |\partial_t \chi_i|(\xi) \leq |\partial_t (\phi_i \circ u)|(\xi):\]
	Indeed, we can approximate $\xi$ by constants on sets whose boundaries are negligible w.r.t.\ the measures on both sides.
	We thus get that for all $1\leq i \leq P$ we have $\partial_t \chi_i \ll E(\bullet, u) dt$ and the corresponding density $V_i$ satisfies $V_i \in L^2\left(E(\bullet, u) dt\right)$.
	
      \textit{Step 3: We have that $|(\partial_t,\nabla) \chi_i|$ and $\frac{1}{2}\left(|\nabla \chi_j|_d + |\nabla \chi_k|_d - |\nabla(\chi_j + \chi_k)|_d\right)dt$ are singular for all pairwise different $1\leq i,j,k \leq P$.}\\
	For a characteristic function $\chi : [0,T]\times \torus \to \R$ we write $|\nabla \chi|_{d+1}$ for the total variation in time and space of the partial spacial derivatives and $|\nabla \chi|_d$ for the total variation the spacial derivatives in space defined almost everywhere in time.
    %

	According to  Theorem 4.17 in \cite{ambrosio2000functions} one can decompose $\supp |(\partial_t,\nabla) \chi_i|$ into the pairwise disjoint sets 
	$\tilde \Sigma_{i,l}:= \partial_\ast \tilde\Omega_i \cap \partial_\ast \tilde\Omega_l$, $1\leq l\leq P$, which are the intersections of the reduced boundaries in time and space.
	The exceptional sets are $\mathcal{H}^d$-negligible and hence can be ignored in all the derivatives $ |(\partial_t,\nabla) \chi_{m}|$, $1\leq m \leq P$.
	Thus we only have to prove that
	\[\frac{1}{2}\left(|\nabla \chi_j|_d + |\nabla \chi_k|_d - |\nabla(\chi_j + \chi_k)|_d\right)dt \left(\tilde \Sigma_{il}\right) =0\]
	for all $1\leq l \leq P$.
	
	Since $j,k\neq i$ and the interfaces are pairwise disjoint we have that
	\[\left|(\partial_t, \nabla)\chi_j\right|\left(\tilde \Sigma_{il}\right)=0 \text{ or } \left|(\partial_t, \nabla)\chi_k\right|\left(\tilde \Sigma_{il}\right)=0.\]
	In the first case we have, since restriction commutes with the total variation,
	\[\frac{1}{2}\left(|\nabla \chi_j|_{d+1} + |\nabla \chi_k|_{d+1} - |\nabla(\chi_j + \chi_k)|_{d+1}\right)\left(\tilde \Sigma_{il}\right) = \frac{1}{2}\left(|\nabla \chi_k|_{d+1}\left(\tilde \Sigma_{il}\right) -|\nabla \chi_k|_{d+1}\left(\tilde \Sigma_{il}\right)\right) = 0.\]
	The analogous argument gives the same result in the second case.
	Finally, a straightforward generalization of  Theorem 3.103 in \cite{ambrosio2000functions} to higher dimensional slicings implies
	\[|\nabla \chi_l|_{1+d} = |\nabla \chi_l|_d\, dt,\]
	which proves the claim.
	
	\textit{Step 4: Conclusion of the $L^2$-estimate.}\\
	Since $|\partial_t \chi_i| \leq |(\partial_t, \nabla)\chi_i|$ as measures we get from Step 2 and Step 3 that $|\partial_t \chi_i| \ll |\nabla \chi_i|_d \, dt$.
	Step 3 also allows to replace $E(\bullet ,u) \, dt$ by $|\nabla \chi_i|_d \, dt$ in the $L^2$-estimate.

	We once more point out that we did not use the Allen-Cahn Equation \eqref{allen cahn} apart from the energy dissipation estimate \eqref{energy-dissipation equality}.
      \end{proof}

      \begin{proof}[Proof of Lemma \ref{lem impl conv ass}]
      The proof is already contained in \cite{laux2015convergence} and \cite{luckhaus1989gibbs}. For the convenience of the reader we reproduce the arguments here.
      
      \textit{Step 1: Localization in time.}\\
      We first show that the integrated assumption of the convergence of the energies
	(\ref{conv_ass}) and the $\Gamma$-convergence of $E_\eps$ to $E$ already imply the pointwise convergence \eqref{conv ass pw in t} -- at least up to a further subsequence.
	We will prove
	\begin{equation}\label{conv ass L1}
	  \lim_{\eps\downarrow 0} \int_0^T \left|  E_\eps(u_\eps) -E(\chi)\right| dt =0,
	\end{equation}
	which after passage to a subsequence clearly implies \eqref{conv ass pw in t}.
	
	To convince ourselves of \eqref{conv ass L1} we rewrite the integral as
	\begin{equation*}
	  \int_0^T \left|  E_\eps(u_\eps) -E(\chi) \right| dt =  \int_0^T \left(  E_\eps(u_\eps) -E(\chi) \right) dt + 2\int_0^T \left(  E_\eps(u_\eps) -E(\chi) \right)_- dt.
	\end{equation*}
	The first right-hand side integral vanishes as $\eps\downarrow0$ by \eqref{conv_ass}. 
	By the lower semi-continuity part of the $\Gamma$-convergence of $E_\eps$ to $E$, see \cite{baldo1990minimal}, and by the convergence \eqref{u eps to chi} of $u_\eps$ to $u$
	the integrand of the second right-hand side term tends to zero pointwise a.e.\ in $(0,T)$. By Lebesgue's dominated convergence also the integral vanishes in the limit
	and we proved \eqref{conv ass L1}.
	
	\textit{Step 2: Localization in space.}\\
	We claim that the convergence \eqref{conv ass pw in t} of the energies implies
	\begin{equation}\label{conv ass local}
	  \lim_{\eps\downarrow0} E_\eps(\zeta, u_\eps) = E(\zeta,u)\quad \text{for }\ae \; 0\leq t \leq T\; \text{and all } \zeta\in C^\infty(\torus).
	\end{equation}
	Indeed, if we assume that w.l.o.g.\ by linearity $0 \leq \zeta \leq 1$, using the $\liminf$-inequality of the  
	$\Gamma$-convergence on the domains $\{\zeta>s\}$ and the layer cake representation $\zeta = \int_0^1 \chara_{\{\zeta>s\}} \,ds$ we obtain the inequality
	\begin{align*}
	  E(\zeta,u) \leq \liminf_{\eps\downarrow0} E_\eps(\zeta,u_\eps). 
	\end{align*}
	But the same argument works for $0\leq 1-\zeta \leq 1$ instead of $\zeta$ and by the convergence \eqref{conv ass pw in t} we have
	\[
	  E(u) - E(\zeta,u) = E(1-\zeta,u) \leq \liminf_{\eps\downarrow0} E_\eps(1-\zeta,u_\eps) 
	  \overset{\eqref{conv ass pw in t}}{=} E(u) - \limsup_{\eps\downarrow0} E_\eps(\zeta,u_\eps),
	\]
	which is the inverse inequality and thus \eqref{conv ass local} follows.
	
	\textit{Step 3: Equipartition of energy.}\\
	Now let us turn to \eqref{equipartition}. 
	First  we claim that \eqref{equipartition} reduces to
	\begin{equation}\label{equipartition simplified}
	  \int \sqrt{2W(u_\eps)} \left| \nabla u_\eps\right| dx \to E(u).
	\end{equation}
	
	Indeed, setting 
	$
	  a_\eps^2 := \frac{\eps}2 \left| \nabla u_\eps \right|^2$ and $b_\eps^2 := \frac1\eps W(u_\eps)$,
	using $a_\eps^2 - b_\eps^2 = \left( a_\eps +  b_\eps\right) \left( a_\eps -  b_\eps\right)$ and Cauchy-Schwarz
	\begin{align*}
	\int \zeta \left| a_\eps^2 -  b_\eps^2 \right| dx
	\leq \|\zeta\|_\infty \left(\int  \left( a_\eps +  b_\eps \right)^2 dx \right)^{\frac12} 
	\left(\int \left( a_\eps -  b_\eps \right)^2 dx \right)^{\frac12}.
	\end{align*}
	Since $\left( a_\eps +  b_\eps \right)^2 \lesssim a_\eps^2 + b_\eps^2 $ the first right-hand side integral stays bounded in the limit $\eps\downarrow0$
	and it is enough to prove that the second right-hand side integral vanishes as $\eps\downarrow0$.
	Expanding the square and using the definition of $a_\eps$ and $b_\eps$ we see that the limit of the second right-hand side integral is equal to
	\[
	  E_\eps(u_\eps) -  \int \sqrt{2W(u_\eps)} \left| \nabla u_\eps\right| dx 
	\]
	and indeed the proof of \eqref{equipartition} reduces to \eqref{equipartition simplified}.
	
	We conclude by proving \eqref{equipartition simplified}.
	By lower semi-continuity and Young's inequality for any cutoff $ 0 \leq \eta \leq 1$ and any $1\leq i \leq P$ we get
	\begin{align*}
	  \int \eta \left| \nabla \left( \phi_i\circ u\right) \right| 
	  \leq \liminf_{\eps\downarrow0} \int \eta \left| \nabla \left( \phi_i\circ u_\eps\right) \right| dx
	  &\leq \liminf_{\eps\downarrow0} \int \eta \sqrt{2W(u_\eps)} \left| \nabla u_\eps \right| dx     \\
	  &\leq \liminf_{\eps\downarrow0} E_\eps(\eta, u_\eps) 
	  \overset{\eqref{conv ass local}}{=} E(\eta,\chi).
	\end{align*}
      Using a partition of unity subordinate to the covering \eqref{covering Br} and choosing the index $1\leq i \leq P$ according to estimate \eqref{baldo E is supremum} we conclude.

      \end{proof}

    \section{Convergence}\label{sec:conv}



    In Section \ref{sec:comp} we proved that the solutions $u_\eps$ of the Allen-Cahn Equation \eqref{allen cahn} are pre-compact.
    In this section we  pass to the limit in the Allen-Cahn Equation \eqref{allen cahn} and prove that the limit moves by mean curvature.
    Since this section is the core of the paper, we give a short idea of the proof and then pass to the rigorous derivation in the subsequent parts, first for the curvature term, and afterwards for the velocity term.

    \subsection{Idea of the proof}

    To illustrate the idea of our proof we give a short overview in the simpler two-phase case.
    In this setting the convergence of the curvature-term 
    \begin{equation}\label{reshetnyak 2phase}
      \lim_{\eps\downarrow0} \int_0^T \int \left( \eps \Delta u_\eps -\frac{1}{\eps} W'(u_\eps)\right)  \xi \cdot \nabla u_\eps\, dx \,dt
      = \sigma \int \nabla \xi \colon \left( Id -\nu \otimes \nu  \right)\left| \nabla \chi \right| dt 
    \end{equation}
    is by the pointwise in time convergence of the energies \eqref{conv ass pw in t} literally contained in \cite{luckhaus1989gibbs} and the only difficulty is to prove
    \begin{equation}\label{overview dtu Du}
    \lim_{\eps\downarrow0} \int_0^T \int \partial_t u_\eps\, \xi \cdot \eps \nabla u_\eps \,dx \,dt = \sigma\int_0^T\int V \, \xi\cdot \nu \left| \nabla \chi \right| dt.
    \end{equation}
    Since $\partial_t u_\eps \rightharpoonup V \left| \nabla \chi\right| dt$ and $\eps \nabla u_\eps \approx \nu$ only in a weak sense,
    we cannot directly pass to the limit in the product.
    The general idea to work around this problem is to follow the strategy of \cite{laux2015convergence}: Thinking of the test vector field $\xi$ as a localization, 
    we ``freeze'' the normal along the sequence to be the fixed direction $\nu^\ast\in \sph^{d-1}$ and estimate the error w.r.t.\ an 
    approximation of the \emph{tilt-excess}
    \begin{equation}\label{exc}
      \exc :=  \sigma \int_0^T \int  |\nu - \nu^\ast|^2 |\nabla \chi|\, dt,
    \end{equation}
    measuring the (local) flatness of the reduced boundary $\partial_\ast \Omega$ of the limit phase $\Omega = \{\chi=1\}.$
    The main difference to the work \cite{laux2015convergence} is that we measure the error w.r.t.\ the tilt-excess $\exc$ instead of the energy-excess 
    \[
      \int \left| \nabla \chi \right| - \int \left| \nabla \chi^\ast \right|, \quad \text{where } \chi^\ast \text{ is a half-space in direction }\nu^\ast.
    \]
    After a localization, De Giorgi's Structure Theorem guarantees the smallness in both cases, see Section 5 in \cite{laux2015convergence}.
    Our approximation of the tilt-excess along the sequence is
    \begin{equation}\label{exc eps}
    \exc_\eps :=  \int_0^T \int \left| \nu_\eps - \nu^\ast\right|^2\eps  \left|\nabla u_\eps \right|^2 dx\,dt,
    \end{equation}
    where $ \nu_\eps= \frac{\nabla u_\eps}{|\nabla u_\eps|}$ denotes the normal of the level sets of $u_\eps$.
    
    \medskip
    
    We will use the approximate tilt-excess to suppress oscillations of the \emph{direction}
      of the term $\eps \nabla u_\eps$ on the left-hand side of \eqref{overview dtu Du} so that we can pass to the limit in the product.
      We replace the normal $\nu_\eps$ 
      by a constant direction $\nu^\ast \in \sph^{d-1}$ and control the difference
    \begin{equation}\label{overview freeze normal}
      \int_0^T \int \partial_t u_\eps\, \xi \cdot \eps \nabla u_\eps  dx \,dt -
      \int_0^T \int \partial_t u_\eps\, \xi \cdot \left( \eps \left| \nabla u_\eps \right| \nu^\ast \right)   dx \,dt
    \end{equation}
    by the following combination of the excess and the initial energy
    \[
	\|\xi\|_\infty \left( \frac1\alpha \exc_\eps + \alpha E_\eps(u_\eps^0)\right)
    \]
    for any (small) parameter $\alpha>0$ --  an immediate consequence of Young's inequality and the energy-dissipation estimate \eqref{energy-dissipation equality}.
    It is easy to check that by the equipartition of energy \eqref{equipartition} we can replace $\eps \left| \nabla u_\eps \right|$ in the second integral in \eqref{overview freeze normal}
    by $\sqrt{2W(u_\eps)}$ up to an error
    that vanishes as $\eps\downarrow 0$:
    \begin{equation}
    \int_0^T \int \partial_t u_\eps\, \xi \cdot \left( \eps \left| \nabla u_\eps \right| \nu^\ast \right)   dx \,dt
    = \int_0^T \int \partial_t u_\eps\,\sqrt{2 W(u_\eps)}\, \xi \cdot \nu^\ast    dx \,dt +o(1).
    \end{equation}
    Identifying the nonlinear term 
    \[
     \partial_t u_\eps\,\sqrt{2 W(u_\eps)}= \partial_t \left( \phi \circ u_\eps\right)
    \]
    as the derivative of the compact quantity $\phi\circ u_\eps \to \phi \circ u$, where $\phi(u) = \int_0^u  \sqrt{2 W(s)} ds$, we can pass to the limit $\eps\downarrow0$ and obtain
    \[
      \int_0^T \int \partial_t \left( \phi\circ u_\eps\right) \xi \cdot \nu^\ast    dx \,dt \to  \sigma \int_0^T \int V  \,\xi \cdot \nu^\ast  \left| \nabla \chi \right| dt.
    \]
    As before, but now at the level of the limit, by Young's inequality we can ``un-freeze'' the normal, i.e.\ 
    replace $\nu^\ast$ by $\nu$ at the expense of 
    \[
      \|\xi\|_\infty \left( \frac1\alpha \exc + \alpha \int_0^T \int V^2 \left|\nabla \chi \right| dt \right).
    \]
    
    While in the case of \cite{laux2015convergence} the convergence assumption trivially implies the convergence of the (approximate) energy-excess, here we have to argue
    why we can pass to the limit in our nonlinear excess $\exc_\eps$ and connect it to $\exc$.
    
    Using the trivial equality $ |\nu -\nu^\ast|^2 = 2(1- \nu\cdot \nu^\ast)$ and the convergence assumption \eqref{conv_ass} this question reduces to
    \begin{equation}\label{overview nonlinear}
      \int_0^T \int \eps \left| \nabla u_\eps \right| \nabla u_\eps dx\,dt \to \int_0^T \int \nabla \left( \phi\circ u\right) dt.
    \end{equation}
    Now the argument is similar to the one before for the time derivative.
    Using again the equipartition of energy \eqref{equipartition} we can replace $\eps \left|\nabla u_\eps\right| $  by  $\sqrt{2 W(u_\eps)}$.
    Identifying the nonlinearity 
    $\sqrt{2W(u_\eps)} \nabla u_\eps = \nabla \left( \phi \circ u_\eps \right)$
    as a derivative yields the convergence of the excess.
        
        \medskip
        
    Thus we arrive at the right-hand side of \eqref{overview dtu Du} -- up to an error
    that we can handle: we localize on a scale $r>0$ so that $\exc\to 0$ as $r\downarrow0$, while the second error term stays bounded by the $L^2$-estimate \eqref{V squared}.
    We then recover the motion law \eqref{H=v}  by sending $\alpha\downarrow0$.

     \subsection{Convergence of the curvature-term}\label{sec:conv curv}
    In the two-phase case, the convergence \eqref{reshetnyak 2phase} of the curvature-term is contained in the work of Luckhaus and Modica \cite{luckhaus1989gibbs}.
    In our setting, the convergence does not follow immediately from their work. We give an extension of this result by quantifying the Reshetnyak-argument.
    
    \begin{prop}\label{multi luckhaus modica}
    Given a sequence $u_\eps \to u = \sum_i \chi_i \alpha_i$ such that the energies converge in the sense of 
    \begin{align}\label{reshetnyak multiphase conv ass}
    E_\eps(u_\eps) \to E(u).
    \end{align}
    Then also the first variations converge:
    \begin{align} 
    &\lim_{\eps\downarrow0} \int \left( \eps \Delta u_\eps -\frac{1}{\eps} \partial_u W (u_\eps)\right) \cdot \left( \xi \cdot \nabla\right) u_\eps\, dx \notag\\
    &= \frac12 \sum_{1 \leq i,j\leq \numphases} \sigma_{ij} \int \nabla \xi \colon 
    \left( Id - \nu_i \otimes \nu_i\right) \frac12 \left( \left| \nabla \chi_i \right| + \left| \nabla \chi_j \right| - \left| \nabla (\chi_i+\chi_j) \right|\right).\label{reshetnyak multiphase} 
    \end{align}
    Furthermore, we have
    \begin{equation}\label{reshetnyak PI gives estimate}
      \int \left( \eps \Delta u_\eps -\frac{1}{\eps} \partial_u W (u_\eps)\right) \cdot \left( \xi \cdot \nabla\right) u_\eps\, dx \lesssim \|\nabla\xi\|_\infty E_\eps(u_\eps).
    \end{equation}

    \end{prop}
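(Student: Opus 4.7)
The starting point is the stress-energy identity obtained by integration by parts in $x$:
\[
\int \bigl( \eps \Delta u_\eps - \tfrac{1}{\eps} \partial_u W(u_\eps) \bigr) \cdot (\xi \cdot \nabla) u_\eps\, dx = \int \nabla \xi : \Bigl[ \bigl(\tfrac{\eps}{2}|\nabla u_\eps|^2 + \tfrac{1}{\eps} W(u_\eps)\bigr) I - \eps\, \nabla u_\eps \otimes \nabla u_\eps \Bigr]\, dx.
\]
Since the matrix in brackets is pointwise bounded in Hilbert--Schmidt norm by a multiple of the Ginzburg--Landau density, the a priori estimate \eqref{reshetnyak PI gives estimate} is immediate. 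To prove \eqref{reshetnyak multiphase}, it then suffices to identify the weak-$\ast$ limits of the scalar measure $\bigl(\tfrac{\eps}{2}|\nabla u_\eps|^2 + \tfrac{1}{\eps} W(u_\eps)\bigr)\,dx$ and of the symmetric positive-semidefinite matrix-valued measure $M_\eps^{jk} := \eps\, \partial_j u_\eps \cdot \partial_k u_\eps\, dx$.

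For the scalar trace part I would re-run the argument of Steps 2 and 3 of the proof of Lemma \ref{lem impl conv ass}: the hypothesis \eqref{reshetnyak multiphase conv ass} combined with Baldo's $\Gamma$-liminf inequality applied on superlevel sets $\{\zeta > s\}$ and the layer-cake formula yields $E_\eps(\zeta, u_\eps) \to E(\zeta, u)$ for every non-negative $\zeta \in C(\torus)$. Equipartition of energy then shows that $\eps|\nabla u_\eps|^2\,dx$, $\tfrac{2}{\eps}W(u_\eps)\,dx$ and $\sqrt{2W(u_\eps)}|\nabla u_\eps|\,dx$ all converge weak-$\ast$ to the same positive Radon measure $\mu_E$ characterized by $\mu_E(\zeta) = E(\zeta, u)$.

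The core step is to upgrade this scalar convergence to the matrix convergence $M_\eps \rightharpoonup^\ast \tfrac12 \sum_{i,j} \sigma_{ij}\, \nu_i \otimes \nu_i \cdot \tfrac12\bigl(|\nabla \chi_i|+|\nabla \chi_j| - |\nabla(\chi_i+\chi_j)|\bigr)$. I would localize via the covering \eqref{covering Br}, Baldo's supremum characterization \eqref{baldo E is supremum}, and Lemma \ref{lem_approximate_normal_L2}: on each ball $B \in \B_r$ pick a dominant phase $i = i_B$ and a fixed unit vector $\nu_B$ approximating the interface normal on $B$. Applying Cauchy--Schwarz and the Lipschitz estimate \eqref{Lipschitz estimate} to the chain-rule identity $\eta \cdot \nabla(\phi_i \circ u_\eps) = \partial_u \phi_i(u_\eps) \cdot \partial_\eta u_\eps$ gives, for every $\eta \in \R^d$,
\[
\int \eta_B |\eta \cdot \nabla(\phi_i \circ u_\eps)|\, dx \leq \left( \int \eta_B \tfrac{2}{\eps} W(u_\eps)\, dx \right)^{1/2} \left( \int \eta_B \eps\, |\partial_\eta u_\eps|^2\, dx \right)^{1/2}.
\]
Choosing $\eta = \nu_B$, lower semicontinuity of total variation on the left together with Lemma \ref{lem_approximate_normal_L2} force, up to an error vanishing with $\kappa$ in \eqref{approximate_normal_L2}, $\liminf_{\eps \downarrow 0} \int \eta_B \eps |\partial_{\nu_B} u_\eps|^2\,dx \geq E(\eta_B, u)$, while the trivial bound $|\partial_{\nu_B} u_\eps|^2 \leq |\nabla u_\eps|^2$ combined with the scalar convergence of the previous paragraph gives the matching upper bound. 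Hence $\int \eta_B \eps |\partial_{\nu_B} u_\eps|^2\,dx \to E(\eta_B, u)$, and consequently the transversal part $\int \eta_B \eps \sum_{l \perp \nu_B} |\partial_l u_\eps|^2\,dx$ vanishes. Decomposing $e_j = (e_j \cdot \nu_B)\nu_B + e_j^\perp$ and using Cauchy--Schwarz on the cross-terms, the off-$(\nu_B\otimes\nu_B)$ entries of $M_\eps$ tested against $\eta_B$ vanish in the limit, so that $\int \eta_B\, dM_\eps^{jk} \to (\nu_B)_j (\nu_B)_k\, \mu_E(\eta_B)$. Summing over $B \in \B_r$ using \eqref{finiteoverlap}, then sending $r \downarrow 0$ and identifying $\nu_B$ with $\pm \nu_i$ via \eqref{approximate_normal_L2} and \eqref{baldo E is supremum}, yields the convergence of $\int \nabla \xi : M_\eps\,dx$ and hence \eqref{reshetnyak multiphase}.

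The main technical obstacle is the asymptotic rank-one structure of $\nabla u_\eps \otimes \nabla u_\eps$: in the scalar two-phase case of \cite{luckhaus1989gibbs} this matrix is pointwise rank one automatically, whereas for vector-valued $u_\eps$ with $N \geq 2$ the rank-one property is only available in the limit and must be extracted from the simultaneous saturation of the Lipschitz bound $|\partial_u \phi_i(u_\eps)| \leq \sqrt{2W(u_\eps)}$ and the equipartition of energy. Since this saturation only occurs on ``single-phase patches'', the localization on Baldo's covering and the choice of a dominant index $i = i_B$ on each ball $B$ are indispensable in the multi-phase setting.
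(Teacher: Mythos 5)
Your argument is correct but follows a genuinely different route from the paper's own proof. Both begin with the same stress-energy identity and the same reduction via equipartition of energy; the divergence lies in how the matrix measure $\eps\,\nabla u_\eps\otimes\nabla u_\eps$ is controlled. The paper first projects $\nabla u_\eps$ onto the state-space direction $\partial_u\phi_i/|\partial_u\phi_i|$ (Claim~1), reducing to the scalar BV function $\phi_i\circ u_\eps$, and then passes to the limit in $\int A:\nu_\eps\otimes\nu_\eps\,|\nabla(\phi_i\circ u_\eps)|$ by a \emph{quantitative Reshetnyak argument} (Claim~2): one disintegrates the limiting varifold as $p_x\otimes\mu$ and bounds $\int|\nu-\tilde\nu|\,dp_x$ by the mass defect $\mu-|\nabla(\phi_i\circ u)|$, itself controlled by the mild excess via \eqref{baldo E is supremum}. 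You avoid the disintegration entirely: you fix a deterministic direction $\nu_B$ per ball via Lemma~\ref{lem_approximate_normal_L2} --- a tool the paper reserves for the velocity term --- and run a sandwich argument on the second moments. The Cauchy--Schwarz chain $\int \eta_B|\nu_B\cdot\nabla(\phi_i\circ u_\eps)|\leq(\int\eta_B\tfrac{2}{\eps}W)^{1/2}(\int\eta_B\eps|\partial_{\nu_B}u_\eps|^2)^{1/2}$, lower semicontinuity of the total variation and the tilt control \eqref{approximate_normal_L2} yield the lower bound on $\liminf_\eps\int\eta_B\eps|\partial_{\nu_B}u_\eps|^2\,dx$, the scalar trace convergence gives the matching upper bound, and the transversal and off-diagonal entries of $M_\eps$ vanish by linear algebra. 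Your version is more elementary (no Young measures) and unifies the curvature- and velocity-term proofs under the same localization machinery; the paper's Reshetnyak step is slightly more general, applying verbatim to any Lipschitz integrand $f(x,\tilde\nu)$ (cf.\ the remark following the proposition). One point worth spelling out when you write this up: Cauchy--Schwarz actually gives $\liminf_\eps\int\eta_B\eps|\partial_{\nu_B}u_\eps|^2\,dx\geq(E(\eta_B,u)-\mathrm{err}_B)^2/E(\eta_B,u)$, and you should observe that this still yields $\geq E(\eta_B,u)-2\,\mathrm{err}_B$ (trivially if $\mathrm{err}_B>E(\eta_B,u)$), so that summability $\sum_{B\in\B_r}\mathrm{err}_B\lesssim\kappa E(\chi)$ suffices; and the final passage from the cutoffs $\eta_B$ to a genuine partition of unity requires the finite-overlap property \eqref{finiteoverlap}, exactly as at the end of the paper's proof of Theorem~\ref{thm AC2MCF}.
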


    \begin{proof}
     Following the lines of \cite{luckhaus1989gibbs} we can rewrite the left-hand side 
      of (\ref{reshetnyak multiphase}) by integrating the first term by parts and using the chain rule for the second term. With
      Einstein's summation convention and omitting the index $\eps$ we have
      \begin{equation}\label{reshetnyak proof 1}
	\int ( \eps \partial_i \partial_i u_k -\frac{1}{\eps} \partial_k W )\, \xi_j \, \partial_j u_k\, dx 
	=  \int \big\{ -\eps \,\partial_i u_k\, \partial_i \xi_j\,  \partial_j  u_k - \eps\, \partial_i u_k\, \xi_j \, \partial_i \partial_j  u_k
	-\frac{1}{\eps} \partial_j ( W(u)) \,\xi_j \big\}\,dx.
      \end{equation}
      We can now rewrite the second term on the right-hand side and integrate by parts to see
      \begin{align*}
	-\int \eps\, \partial_i u_k\, \xi_j \, \partial_i \partial_j  u_k \,dx
	= -\int \eps\, \xi_j \, \partial_j \Big\{\frac{1}{2} \left( \partial_i   u_k\right)^2 \Big\} \,dx
	=   \int  \left( \nabla \cdot \xi \right) \frac\eps2 \left| \nabla u \right|^2 dx.
      \end{align*}
      Plugging this into (\ref{reshetnyak proof 1}) the left-hand side of (\ref{reshetnyak multiphase}) is thus equal to
      \begin{equation*}
	\int \nabla \xi \colon \left( Id - N^\eps \otimes N^\eps \right) \eps \left| \nabla u_\eps\right|^2 dx
	  + \int  \left( \nabla \cdot \xi\right) \Big( \frac1\eps W(u_\eps) - \frac\eps2\left| \nabla u_\eps\right|^2 \Big) dx,
      \end{equation*}  
      where $N^\eps := \frac{\nabla u_\eps}{|\nabla u_\eps|} \in \R^{\numphases \times d}$ and $(N^\eps \otimes N^\eps)_{ij} := \sum_k N^\eps_{ki} N^\eps_{kj} \in \R^{d\times d}$,
      a slightly non-standard definition of this symbol.
      From this we immediately obtain \eqref{reshetnyak PI gives estimate}.
      By  the equipartition of energy (\ref{equipartition}), see also Remark \ref{compactness_for_forces}, the  second integral is negligible as $\eps \to 0$ and up to another error that vanishes as $\eps\to0$ we can replace the first term by
      \begin{equation*}
	\int \nabla \xi \colon \left( Id - N^\eps \otimes N^\eps \right) \sqrt{2W(u_\eps)} \left| \nabla u_\eps\right| dx.
      \end{equation*}
      Again by the equipartition of energy (\ref{equipartition}) it is enough to prove the convergence of the nonlinear term
      \begin{align}\label{reshetnyak proof 2}
	\int A \colon  N^\eps \otimes N^\eps \sqrt{2W(u_\eps)} \left| \nabla u_\eps\right|  dx \to 
	\sum_{i,j} \sigma_{ij} \int A \colon 
    \nu_i \otimes \nu_i \frac12 \left( \left| \nabla \chi_i \right| + \left| \nabla \chi_j \right| - \left| \nabla (\chi_i+\chi_j) \right|\right)
      \end{align}
      for any smooth matrix field $A \colon \torus \to \R^{d\times d}$. By linearity we may assume w.l.o.g.\ $|A|\leq 1$.
      
      \medskip
      
      We prove \eqref{reshetnyak proof 2} using the following two claims:
      
      \smallskip
      
      \textit{Claim 1: We choose a majority phase by introducing the function $\phi = \phi_i$ for some 	arbitrary $1\leq i \leq P$ on the right-hand side of \eqref{reshetnyak proof 2}.
	      The corresponding estimate is
	      \begin{align}\label{reshetnyak proof step 1}
		\limsup_{\eps\to0} \left| \int A \colon  N^\eps \otimes N^\eps \sqrt{2W(u_\eps)} \left| \nabla u_\eps\right|  dx - 
		\int A \colon \nu_\eps \otimes \nu_\eps \left| \nabla (\phi\circ u_\eps)\right| dx\right| 
		\lesssim  E(u, \eta)-\int\eta \left| \nabla \phi(u) \right|,
	      \end{align}
	      where $\nu_\eps := \frac{\nabla \phi(u_\eps)}{|\nabla \phi(u_\eps)|} \in \R^{d}$.}
	      
	      \smallskip
      
      \textit{Claim 2: We adapt the Reshetnyak argument in \cite{luckhaus1989gibbs} to our setting by 		turning the qualitative statements there into a quantitative statement.
	      Under the assumption (\ref{reshetnyak multiphase conv ass}) we claim
	      \begin{equation}\label{reshetnyak proof step 2}
		\limsup_{\eps\downarrow0} \left| \int A \colon \nu_\eps \otimes \nu_\eps \left| \nabla( \phi \circ u_\eps)\right| dx
		- \int A \colon \nu \otimes \nu \left| \nabla (\phi \circ u) \right| \right| \lesssim E(u, \eta)-\int\eta \left| \nabla (\phi \circ u) \right|.
	      \end{equation}}
      
      In both cases we express the errors in terms of the ``mild excess''
      \begin{equation}\label{mild excess}
      E(u, \eta)-\int\eta \left| \nabla \phi(u) \right|,
      \end{equation}
      which measures the local difference of the multi-phase setting to the two-phase setting on the support of the matrix field $A$  
      approximated with a cut-off $\eta$.
      
      Decomposing an arbitrary matrix field $A$ by a partition of unity and using the localization estimate \eqref{baldo E is supremum} we obtain \eqref{reshetnyak proof 2} and thus proved the proposition.
      
      \smallskip
      
      \textit{Proof of Claim 1: Introducing a majority phase.}\\
	First we replace the matrix $N^\eps = \frac{\nabla u_\eps}{|\nabla u_\eps |}$ by $\pi_{u_\eps} N^\eps$, where 
	$\pi_{u} = \frac{\partial_u \phi}{|\partial_u \phi|} \otimes \frac{\partial_u \phi}{|\partial_u \phi|}$.
	Note that then the additional sum in the definition of the symbol $\pi_{u_\eps} N^\eps  \otimes \pi_{u_\eps} N^\eps$ collapses:
	\[
	  \left(\pi_{u} N  \otimes \pi_{u} N\right)_{ij} = \sum_{k=1}^N \left(\pi_{u} N \right)_{ki} \left(\pi_{u} N\right)_{kj} = \left(\sum_{k=1}^N \frac{(\partial_u\phi)_k^2}{|\partial_u\phi|^2} \right)
	  \frac{(\partial_u \phi \nabla u)_{i}}{|\partial_u \phi| |\nabla u|}   \frac{(\partial_u \phi \nabla u)_{j}}{|\partial_u \phi| |\nabla u|}.
	\]
	Furthermore, using the chain rule of Ambrosio and Dal Maso, Lemma \ref{chain rule}, we see
	\begin{align*}
	  A\colon \left( \pi_{u_\eps} N^\eps \right) \otimes\left( \pi_{u_\eps} N^\eps \right) 
	  = |\pi_{u_\eps} N^\eps|^2 A \colon \nu_\eps  \otimes \nu_\eps.
	\end{align*}
	
	Two errors arise in (\ref{reshetnyak proof step 1}). The first error when replacing $N^\eps$ by $\nu_\eps$ and the second when replacing $\sqrt{2W(u_\eps)}|\nabla u_\eps|$ by
	$|\nabla (\phi\circ u)|$.
	
	The first error when introducing the projection $\pi_u$ is bounded by
	\begin{align}
	  \int \eta \left|\left( Id- \pi_{u_\eps} \right)  N^\eps \right|^2 \sqrt{2W(u_\eps)} \left| \nabla u_\eps\right| 
	  +\eta \left( 1- |\pi_{u_\eps}N^\eps|^2 \right)^2 \sqrt{2W(u_\eps)} \left| \nabla u_\eps\right| dx.\label{reshetnyak proof 3}
	\end{align}
	Since multiplication by $\pi_u$ is an orthogonal projection in matrix-space and $\left|\frac{\partial_u \phi}{|\partial_u\phi|}N^\eps\right|=|\pi_u N^\eps| \leq 1$ we have
	\begin{align*}
	  \left|\left( Id- \pi_{u_\eps} \right) N^\eps \right|^2
	  = \left|  N^\eps \right|^2 - \left|\pi_{u_\eps}  N^\eps \right|^2 
	  \lesssim & 1 - \left|\pi_{u_\eps}  N^\eps \right| = 1-\left|\frac{\partial_u \phi}{|\partial_u\phi|}N^\eps\right|.
	\end{align*}
	Multiplying this inequality with $\sqrt{2W(u_\eps)} \left| \nabla u_\eps\right|$ and using the Lipschitz estimate for $\phi\circ u$ \eqref{Lipschitz estimate} we see that
	\begin{align*}
	\left|\left( Id- \pi_{u_\eps} \right)  N^\eps \right|^2 \sqrt{2W(u_\eps)} \left| \nabla u_\eps\right| 
	&\leq \left(1-\left|\frac{\partial_u \phi}{|\partial_u\phi|}N^\eps\right|\right)\sqrt{2W(u_\eps)} \left| \nabla u_\eps\right| \\
	&\leq \sqrt{2W(u_\eps)}
	  \left|  \nabla u_\eps \right| -\left|\partial_u\phi(u_\eps)  \nabla u_\eps \right|.
	\end{align*}

      Plugging this into (\ref{reshetnyak proof 3}) and using the Ambrosio-Dal Maso chain rule \eqref{Dphi} again, we see that the error is controlled by
      \begin{align*}
	  E_\eps(u_\eps,\eta) - \int \eta \left| \nabla (\phi\circ u_\eps) \right| dx.
      \end{align*}
	  
      By the convergence of the energies (\ref{reshetnyak multiphase conv ass}) and lower semi-continuity of the total variation we can pass to the limit $\eps \to0$ in this expression and obtain
      the upper bound
      \begin{equation*}
	E(u,\eta) - \int \eta  \left| \nabla (\phi\circ u) \right|.
      \end{equation*}

      Finally, we turn to the second error, when substituting $\sqrt{2W(u_\eps)} \left| \nabla u_\eps\right| $ by $\left| \nabla (\phi\circ u_\eps) \right| $ in (\ref{reshetnyak proof step 1}).
      Since $|\nabla (\phi \circ u_\eps)| \leq |\partial_u\phi| |\nabla u_\eps|  \leq \sqrt{2W(u_\eps)} \left| \nabla u_\eps\right|$,
      by Young's inequality this second error is estimated by
      \begin{align*}
	\int \eta \left| \sqrt{2W(u_\eps)} \left| \nabla u_\eps\right| -\left| \nabla (\phi\circ u_\eps) \right|\right| dx
	  = \int  \eta \left( \sqrt{2W(u_\eps)} \left| \nabla u_\eps\right| -  \left| \nabla (\phi\circ u_\eps) \right| \right)dx,
      \end{align*}
      which by the equipartition \eqref{equipartition} and Remark \ref{compactness_for_forces} again passes to the limit as before and thus proves (\ref{reshetnyak proof step 1}).
      
      \smallskip
      
      \textit{Proof of Claim 2: A quantitative Reshetnyak-argument for $\phi\circ u$.}\\
	We could pass to the limit in the nonlinear expression $\int A \colon \nu\otimes \nu \left| \nabla \left(\phi\circ u\right)\right|$ by the classical Reshetnyak argument if we knew that the mass $\int \left| \nabla \left(\phi\circ u\right)\right|$ converged.
	In our case we unfortunately do not know if the total variation for each 
	$\phi_i \circ u$ converges, but we can make the error small by localizing.

	Our argument for (\ref{reshetnyak proof step 2}) can be regarded as a quantitative analogue of the classical Reshetnyak-argument \cite{reshetnyak1968weak}, see also \cite{luckhaus1989gibbs}.
	
	By Banach-Alaoglu and a disintegration result for measures we can find a measure $\mu$ on $\torus$ and a family of probability measures $\{p_x\}_{x\in \torus}$ on $\sph^{d-1}$ such that
	\begin{equation}\label{reshetnyak def px mu}
	  \int \zeta(x,\nu_\eps)  \left| \nabla (\phi\circ u_\eps)\right|  dx \to \iint \zeta(x,\tilde \nu) \,dp_x(\tilde \nu) \, d\mu(x)
	\end{equation}
	for all $\zeta \in C(\torus \times \sph^{d-1})$ -- at least after passage to a subsequence. But since we will identify the limit we may pass to subsequences. 
	In particular we have
	\begin{equation}\label{reshetnyak def px mu with A}
	  \int A \colon \nu_\eps \otimes \nu_\eps \left| \nabla (\phi\circ u_\eps)\right| dx \to \int A(x)\colon \int \tilde\nu \otimes \tilde \nu \,dp_x(\tilde \nu) \, d\mu(x).
	\end{equation}
	Our aim is to prove that -- up to the ``mild excess'' (\ref{mild excess}) -- the right-hand side of (\ref{reshetnyak def px mu with A}) is equal to
	\[
	\int A \colon \nu \otimes \nu \left| \nabla (\phi \circ u) \right|.
	\]

	On the one hand, by the lower semi-continuity of the total variation and (\ref{reshetnyak def px mu}) with $\zeta(x,\nu) = \eta(x) \geq 0$
	\begin{equation}\label{Dphi leq mu}
	\int \eta | \nabla (\phi \circ u)|  \leq \liminf_{\eps\downarrow0}  \int \eta | \nabla (\phi\circ u_\eps)| \,dx = \iint \eta  \, d\mu,
	\end{equation}
	i.e.\  $| \nabla (\phi \circ u)|$ is dominated by $\mu$. 
	
	On the other hand, by the assumption (\ref{reshetnyak multiphase conv ass}) the measure $\mu$ 
	is dominated by the energy. Indeed, for any $\eta\geq 0$ we have by Young's inequality
	\begin{equation}\label{mu leq E}
	  \int \eta \, d\mu = \lim_{\eps\downarrow0} \int \eta \left| \nabla (\phi \circ u_\eps)\right|  dx 
	  \leq \liminf_{\eps\downarrow0} E_\eps(u_\eps,\eta) = E(\chi,\eta).
	\end{equation}

	Using
	$
	  \left|\tilde \nu \otimes \tilde \nu - \nu\otimes \nu\right| \leq 2 \left| \tilde \nu - \nu\right|
	$
	and the relation (\ref{Dphi leq mu}) between the measures $|\nabla (\phi\circ u)| $ and $\mu$ we see
	\begin{align*}
	  \left|\int A\colon \int \tilde \nu \otimes \tilde \nu \, dp_x(\tilde \nu) \, d\mu - \int A \colon \nu\otimes \nu \left| \nabla (\phi \circ u)\right| \right|
	  \lesssim &\int \eta  \left( d\mu -\left| \nabla (\phi \circ u)\right|  \right)\\
	  &+ \int \eta \int \left| \nu -\tilde \nu \right| dp_x(\tilde \nu) \left| \nabla (\phi \circ u) \right|.
	\end{align*}
	By (\ref{mu leq E}) the first right-hand side term is estimated by the ``mild excess'' (\ref{mild excess}).
	
	We are left with proving 
	\begin{equation}\label{reshetnyak nu vs expectation of p}
	  \int \eta \int \left| \nu -\tilde \nu \right| dp_x(\tilde \nu) \left| \nabla (\phi \circ u) \right| 
	  \lesssim E(\chi,\eta) - \int \eta \left| \nabla (\phi\circ u)\right|.
	\end{equation}
	
	But distributional convergence of
	$\nabla (\phi \circ u_\eps)$ towards $\nabla (\phi \circ u)$ and (\ref{reshetnyak def px mu}) with $\zeta(x,\tilde\nu) = \xi(x) \cdot \tilde \nu$ 
	yield an equality for the linear term
	\begin{align}
	\int \xi \cdot \nu \left| \nabla (\phi \circ u)\right| = \int \xi \cdot \nabla (\phi \circ u) 
	&= \lim_{\eps\downarrow0} \int \xi \cdot \nabla (\phi \circ u_\eps)\,dx \notag\\
	&= \lim_{\eps\downarrow0} \int \xi \cdot \nu_\eps \left| \nabla (\phi \circ u_\eps)\right|dx
	\overset{(\ref{reshetnyak def px mu})}{=} \int \xi \cdot \int \tilde \nu \, dp_x(\tilde \nu) \, d\mu\label{varifold vs current convergence}
	\end{align}
	for any smooth test vector field $\xi \colon \torus \to \R^d$. 
	This draws a connection between the normal $\nu$ and the expectation $\int \tilde \nu \, dp_x(\tilde \nu)$ of the measures $p_x$.
	
	Therefore for any such $\xi$ with $|\xi| \leq \eta$ we get
	\begin{align*}
	  \int \xi \cdot \int \left( \nu -\tilde \nu \right) dp_x(\tilde \nu) \left| \nabla (\phi \circ u) \right| 
	  &\overset{(\ref{varifold vs current convergence})}{=}
	  \int \xi \cdot \int \tilde \nu  dp_x(\tilde \nu) \left( d\mu -\left| \nabla (\phi \circ u) \right| \right)\\
	  &\overset{(\ref{Dphi leq mu})}{\leq} \|\xi \|_\infty \left(\int \eta \, d\mu - \int \eta \left| \nabla (\phi \circ u) \right|\right)
	\end{align*}
	and after taking the supremum over all such $\xi$ we discover
	\begin{align}
	  \int \eta \int \left| \nu -\tilde \nu \right| dp_x(\tilde \nu) \left| \nabla (\phi \circ u) \right| 
	  \leq \int \eta \, d\mu - \int \eta \left| \nabla (\phi \circ u) \right|. 
	\end{align}
	Finally, notice that another application \eqref{mu leq E} proves the claim (\ref{reshetnyak nu vs expectation of p}).
    \end{proof}

    \begin{rem}
      The quantitative Reshetnyak argument \eqref{reshetnyak proof step 2} holds also for any other Lipschitz continuous function $f(x,\tilde \nu)$ on $\sph^{d-1}$ instead of 
      $A(x) \colon \tilde \nu \otimes \tilde \nu$.
    \end{rem}
    \subsection{Convergence of the velocity-term}
    As in the proof of convergence in the two-phase case our main tool will be a suitable tilt-excess.
    However, because $\nabla u_\eps$ now describes the direction of change both in physical space and in state space, some care needs to be taken in defining such an excess.
    It is apparent that the limiting equation only sees the direction of change in physical space explicitly.
    In contrast, the change of direction in state space only enters implicitly through the surface tensions, which are the lengths of geodesics connecting the wells.
    It is therefore natural to define an approximate tilt-excess which only fixes the change of direction in physical space.

    \begin{defn}
      Let $\nu^* \in \sph^{d-1}$ and $\eta \in C^\infty\left([0,T]\times\torus;[0,1]\right)$.
      For $\eps >0$ and a function $u_\eps \in W^{1,2}([0,T]\times\torus;\R^n)$ the localized tilt-excess of the $i$-th phase, $1\leq i \leq N$, is given by
      \begin{equation}\label{exc eps mult}
	\exc^i_\eps(\nu^*;\eta,u_\eps) := \int_0^T\int \eta \frac{1}{\eps} \left|\eps \nabla u_\eps + \partial_u \phi_i (u_\eps)\otimes \nu^*\right|^2 dx dt.
      \end{equation}
      In the limit $\eps =0 $ and for a partition $\chi_i = \chara_{\Omega_i} \in BV\left([0,T]\times\torus;\{ 0,1\}\right)$ with $\sum_i \chi_i =1$ we define the tilt-excess
      for $1\leq i,j \leq P$, $i\neq j$, to be
      \begin{align}\label{exc mult}
	\exc^{ij}(\nu^*;\eta,u) := &  \int_0^T \int \eta \left|\nu_i - \nu^\ast\right|^2 \left| \nabla \chi_i \right| dt
	+\int_0^T \int \eta \left|\nu_j + \nu^\ast\right|^2 \left| \nabla \chi_j \right| dt
	+  \sum_{k \notin \{i,j\}} \int_0^T\int \eta  \left| \nabla \chi_k\right| dt, 
      \end{align}
      where $u = \sum_{1\leq i \leq N} \alpha_i \chi_i$ and $\nu_i$, as throughout the paper, is the inner normal of $\Omega_i$.
    \end{defn}

    Note that the limiting excess measures two things:
    Firstly, the last term measures whether mostly the interface between the $i$-th and the $j$-th phase is present.
    Secondly, the first two terms measure how close the interface is to being flat.

    A subtle point in the definition is that $\chi_i$ falls while moving out of the corresponding phase, while $\phi_i$ grows.
    Hence their differentials have opposite directions.
    We choose $\nu^*$ to be the approximate inner normal of $\chi_i$, which leads to the positive sign in $ \exc^i_\eps$ and the second term in $\exc^{ij}$ and the negative one in the first term in $\exc^{ij}$.
    For a similar reason the limiting excesses are not symmetric in $i$ and $j$.
    Instead we have $\exc^{ij}(\nu^*;\eta,u) = \exc^{ji}(-\nu^*;\eta,u)$.

    We first make sure that we can use $\exc^{ij}(\nu^*;\eta,\chi)$ to asymptotically bound $\exc^i_\eps(\nu^*;\eta,u_\eps)$.

    \begin{lem}\label{lem: conv exc mult}
    Let $u^\eps$ satisfy the a priori estimate \eqref{generalized_energy_dissipation} and the convergence assumption \eqref{conv_ass}.
    Then for every $1\leq i,j\leq P$, $i\neq j$, $\nu^* \in \sph^{d-1}$ and $\eta \in C^\infty([0,T]\times \torus;[0,1])$ we have
    \begin{equation}\label{conv exc mult}
      \limsup_{\eps \to 0} \exc^i_\eps(\nu^*;\eta,u_\eps) \lesssim  \exc^{ij}(\nu^*;\eta,\chi).
    \end{equation}
    \end{lem}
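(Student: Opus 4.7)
The plan is to expand the square in \eqref{exc eps mult}, identify each resulting term with a quantity for which compactness or equipartition is already available, and then bound the limit by $\exc^{ij}$ via the standard interface decomposition of the limit partition. Using the chain rule \eqref{Dphi} one computes $\nabla u_\eps:(\partial_u\phi_i(u_\eps)\otimes\nu^*)=\nu^*\cdot\nabla(\phi_i\circ u_\eps)$, so that after expansion and division by $\eps$ the Lipschitz estimate \eqref{Lipschitz estimate} yields
\[
\exc^i_\eps(\nu^*;\eta,u_\eps)\leq\int_0^T\!\!\int\eta\Bigl(\eps|\nabla u_\eps|^2+\frac{2}{\eps}W(u_\eps)\Bigr)dx\,dt+2\int_0^T\!\!\int\eta\,\nu^*\cdot\nabla(\phi_i\circ u_\eps)\,dx\,dt.
\]

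For the first bracket I invoke the equipartition of energy \eqref{equipartition}, which holds for a.e.\ $t\in[0,T]$, combined with the uniform bound $E_\eps(u_\eps(t))\lesssim E_\eps(u_\eps^0)$ from \eqref{generalized_energy_dissipation} and Lebesgue's dominated convergence, so that both contributions converge to $\int_0^T E(\eta,u)\,dt$. For the middle term, Proposition \ref{prop_comp} gives $\phi_i\circ u_\eps\to \phi_i\circ u$ in $L^1([0,T]\times\torus)$ while Lemma \ref{chain rule} provides uniform BV bounds, so that $\nabla(\phi_i\circ u_\eps)\rightharpoonup \nabla(\phi_i\circ u)$ in the weak-$*$ sense as $\R^d$-valued measures on $[0,T]\times\torus$. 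Since $\phi_i\circ u=\sum_k\sigma_{ki}\chi_k$, the disintegration $\nabla(\phi_i\circ u)=\sum_k\sigma_{ki}\nu_k|\nabla\chi_k|\,dt$ gives
\[
\limsup_{\eps\downarrow 0}\exc^i_\eps(\nu^*;\eta,u_\eps)\leq 2\int_0^T E(\eta,u)\,dt+2\sum_k\sigma_{ki}\int_0^T\!\!\int\eta\,\nu^*\cdot\nu_k\,|\nabla\chi_k|\,dt.
\]

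It then remains to bound this right-hand side by $\exc^{ij}(\nu^*;\eta,u)$. Introducing the interface measures $\mu_{kl}:=\tfrac12(|\nabla\chi_k|+|\nabla\chi_l|-|\nabla(\chi_k+\chi_l)|)$ of the BV-partition, which satisfy $|\nabla\chi_k|=\sum_{l\neq k}\mu_{kl}$, $\nu_l=-\nu_k$ on $\operatorname{supp}\mu_{kl}$, and $E(\eta,u)=\sum_{k<l}\sigma_{kl}\int\eta\,\mu_{kl}\,dt$, the right-hand side decomposes over unordered pairs $\{k,l\}$ as $\sum_{\{k,l\}}C_{kl}$ with
\[
C_{kl}=2\sigma_{kl}\int\eta\,\mu_{kl}\,dt+2(\sigma_{ki}-\sigma_{li})\int\eta\,\nu^*\cdot\nu_k\,\mu_{kl}\,dt,
\]
where the antisymmetry of $\nu_k$ on $\operatorname{supp}\mu_{kl}$ has been used to pair up ordered contributions. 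The key pair $\{i,j\}$ collapses, using $\sigma_{ii}=0$ and $\sigma_{ji}=\sigma_{ij}$, to $C_{ij}=\sigma_{ij}\int\eta|\nu_i-\nu^*|^2\mu_{ij}\,dt$; since $\nu_j=-\nu_i$ on $\Sigma_{ij}$ and $\mu_{ij}\leq|\nabla\chi_i|,|\nabla\chi_j|$, this is controlled by the first two summands of $\exc^{ij}$. For every other pair, the triangle inequality $|\sigma_{ki}-\sigma_{li}|\leq\sigma_{kl}$ combined with $|\nu^*\cdot\nu_k|\leq 1$ gives $|C_{kl}|\leq 4\sigma_{kl}\int\eta\,\mu_{kl}\,dt\leq 4\sigma_{kl}\int\eta|\nabla\chi_m|\,dt$ for any $m\in\{k,l\}\setminus\{i,j\}$ (which exists since $\{k,l\}\neq\{i,j\}$), so all remaining terms are absorbed into the third summand of $\exc^{ij}$. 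The main obstacle is the bookkeeping of this pair-by-pair decomposition, specifically the symmetrization of the sum over ordered pairs and the use of the triangle inequality for $\sigma$ to match the rough excess contribution; the conceptual mechanism that makes everything work is that the perfect-square structure of $\exc^i_\eps$ isolates the state-space information into the derivative $\nabla(\phi_i\circ u_\eps)$ of the compact quantity $\phi_i\circ u_\eps$, leaving only physical-space geometry in the limit.
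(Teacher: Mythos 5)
Your argument is correct and follows the paper's proof in its essential steps: expand the square, use the Lipschitz estimate $|\partial_u \phi_i(u)|\leq\sqrt{2W(u)}$, identify the cross term as $\nu^*\cdot\nabla(\phi_i\circ u_\eps)$ via the chain rule, and pass to the limit using the convergence assumption together with the $L^1$- and $BV$-compactness of $\phi_i\circ u_\eps$. The only difference is in the final combinatorial step: where the paper bounds the two pieces $\int_0^T E(\eta,u)\,dt$ and $\int_0^T\int\eta\,\nu^*\cdot\nabla(\phi_i\circ u)\,dt$ separately (using $\mu_{kl}\leq|\nabla\chi_m|$ for some $m\notin\{i,j\}$ and then completing the square $2(1+\nu_j\cdot\nu^*)=|\nu_j+\nu^*|^2$), you organize the full limit as a single sum over unordered pairs $\{k,l\}$ with coefficients $C_{kl}$, invoke the triangle inequality $|\sigma_{ki}-\sigma_{li}|\leq\sigma_{kl}$ for the off-diagonal pairs, and observe that the key pair $\{i,j\}$ collapses directly to $\sigma_{ij}\int\eta|\nu_i-\nu^*|^2\mu_{ij}\,dt$. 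Both are valid bookkeeping schemes giving the same conclusion; your version makes the perfect-square structure of the leading term a bit more transparent, at the cost of slightly heavier notation in the pairwise decomposition.
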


     Using this estimate, as in the two-phase case before, we prove
    \eqref{overview dtu Du} up to an error controlled by the tilt-excess \eqref{exc mult}.

    \begin{prop}\label{multi prop dt nu}
      Given $u^\eps$ satisfying the a priori estimate \eqref{generalized_energy_dissipation} and the convergence assumption \eqref{conv_ass}, there exists a finite Radon measure $\mu$ on $[0,T]\times \torus$, such that
      for any $1\leq i,j\leq P$, $i\neq j$, any parameter $\alpha>0$, any direction $\nu^*\in \sph^{d-1}$
      and any test vector field $\xi\in C_0^\infty((0,T)\times \R^d,\R^d) $ we have
      \begin{align}
	\limsup_{\eps\downarrow0} & \left|\int_0^T \int \eps (\xi \cdot \nabla ) u_\eps \cdot \partial_t u_\eps\,dx \,dt -
	\sigma_{ij}\int_0^T \int \xi \cdot \nu_i  V_i \frac12 \left( \left| \nabla\chi_i\right|+\left| \nabla\chi_j\right| - \left| \nabla(\chi_i+\chi_j)\right|\right)dt \right|\notag \\ 
	& \lesssim  \|\xi\|_\infty \left(\frac{1}{\alpha} \exc^{ij}( \nu^*;\eta, u)  + \alpha \mu(\eta) \right).\label{dtu nu}
      \end{align}
      Here $\eta \in C^\infty([0,T]\times \R^d)$ is a smooth cut-off for the support of $\xi$, i.e.\ $\eta \geq 0$ and $\eta \equiv 1$ on $\supp \xi$.
    \end{prop}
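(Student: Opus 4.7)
The plan follows the ``freeze the normal'' strategy sketched in the Idea of the Proof, extended to the multi-phase setting by substituting $\partial_u \phi_i(u_\eps)$ for $\sqrt{2W(u_\eps)}$ and the multi-phase tilt-excess $\exc^{ij}$ of \eqref{exc mult} for its two-phase analogue. I work in three steps: freezing at the $\eps$-level, passing to the limit in the resulting linear nonlinearity, and then un-freezing at the limit level.

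\medskip

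\emph{Step 1 (freezing at the $\eps$-level).} Starting from the pointwise identity
\[
\eps(\xi\cdot\nabla)u_\eps\cdot\partial_t u_\eps + (\xi\cdot\nu^*)\,\partial_u\phi_i(u_\eps)\,\partial_t u_\eps = \bigl[(\eps\nabla u_\eps + \partial_u\phi_i(u_\eps)\otimes\nu^*)\xi\bigr]\cdot\partial_t u_\eps,
\]
and exploiting that $|\xi|\leq \|\xi\|_\infty \sqrt{\eta}$ on $\supp\xi$, Young's inequality with parameter $\alpha>0$ estimates the absolute value of the integral of the right-hand side by
\[
\|\xi\|_\infty\Bigl(\tfrac{1}{2\alpha}\,\exc^i_\eps(\nu^*;\eta,u_\eps) + \tfrac{\alpha}{2}\int_0^T\!\!\int\eta\,\eps|\partial_t u_\eps|^2\,dx\,dt\Bigr).
\]
Lemma \ref{lem: conv exc mult} controls the first summand in the limit by $\exc^{ij}(\nu^*;\eta,u)$. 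For the second, \eqref{energy-dissipation equality} makes $\eps|\partial_t u_\eps|^2\,dx\,dt$ a bounded family of Radon measures; I let $\mu$ be any weak-$*$ cluster point of this family, augmented by the finite measure $\sum_k V_k^2|\nabla\chi_k|\,dt$ (finite by \eqref{V squared}) -- this addition will pay for Step~3.

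\medskip

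\emph{Step 2 (identification of the nonlinearity and passage to the limit).} The chain rule of Lemma \ref{chain rule} recognizes $\partial_u\phi_i(u_\eps)\,\partial_t u_\eps = \partial_t(\phi_i\circ u_\eps)$ as the time derivative of the compact sequence $\phi_i\circ u_\eps\to\phi_i\circ u$ in $L^1$ (Proposition \ref{prop_comp}). Since $\xi$ is compactly supported in $(0,T)$, integration by parts in time is legitimate and yields, using $\phi_i\circ u = \sum_{k\neq i}\sigma_{ik}\chi_k$ together with the velocity relation \eqref{v=dtX},
\[
\lim_{\eps\downarrow0}\int_0^T\!\!\int(\xi\cdot\nu^*)\,\partial_t(\phi_i\circ u_\eps)\,dx\,dt = \sum_{k\neq i}\sigma_{ik}\int_0^T\!\!\int(\xi\cdot\nu^*)\,V_k\,|\nabla\chi_k|\,dt.
\]

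\medskip

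\emph{Step 3 (un-freezing at the limit).} Write $\omega := \tfrac12(|\nabla\chi_i|+|\nabla\chi_j|-|\nabla(\chi_i+\chi_j)|)$ for the common-interface measure. Decompose the $k=j$ contribution via $|\nabla\chi_j|=\omega + (|\nabla\chi_j|-\omega)$: on $\Sigma_{ij}$ the compatibility $\sum_k\partial_t\chi_k=0$ together with $|\nabla\chi_i|_{\Sigma_{ij}}=|\nabla\chi_j|_{\Sigma_{ij}}=\omega$ forces $V_j=-V_i$, so the $\omega$-part matches the target $\sigma_{ij}\int(\xi\cdot\nu_i)V_i\omega\,dt$ up to a tilt error $\sigma_{ij}\int\xi\cdot(\nu^*-\nu_i)V_i\omega\,dt$. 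The residual $(|\nabla\chi_j|-\omega)$ lives on $\bigcup_{k\notin\{i,j\}}\Sigma_{jk}$, where the $BV$-structure of the partition yields $|\nabla\chi_j|-\omega\leq \sum_{k\notin\{i,j\}}|\nabla\chi_k|$ as measures. Applying Young with parameter $\alpha$ to each of the three resulting error terms, the geometric factors $|\nu^*-\nu_i|^2\omega\leq|\nu^*-\nu_i|^2|\nabla\chi_i|$, $|\nabla\chi_j|-\omega$, and $|\nabla\chi_k|$ ($k\notin\{i,j\}$) fall precisely into the three summands of $\exc^{ij}(\nu^*;\eta,u)$, while the companion $V^2$-factors $\int\eta V_k^2|\nabla\chi_k|\,dt$ are absorbed into $\alpha\mu(\eta)$ by the very construction of $\mu$.

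\medskip

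\emph{Main obstacle.} Step~3 is the delicate one. In the two-phase case a single interface forces $|\nabla\chi_j|=\omega$ and only one phase contributes, so un-freezing is nothing but replacing $\nu^*$ by $\nu$. In the multi-phase setting one must additionally absorb the ``wrong'' pieces of $|\nabla\chi_j|$ lying on $\Sigma_{jk}$ ($k\notin\{i,j\}$) and the spurious contributions from phases $k\notin\{i,j\}$ themselves. This is exactly why the tilt-excess is asymmetric in $i$ and $j$ and carries the collective penalty $\sum_{k\notin\{i,j\}}|\nabla\chi_k|$: its three summands are tailored to absorb these three sources of error, and any other choice of excess (in particular the energy-excess used in \cite{laux2015convergence}) would require a more complex accounting.
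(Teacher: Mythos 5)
Your proposal is correct and follows the same freeze--identify--unfreeze strategy as the paper, with the same treatment of Steps 1 and 2: Young's inequality with the multi-phase tilt-excess of \eqref{exc eps mult}, passage of $\eps|\partial_t u_\eps|^2$ to a weak-$*$ limit $\mu$, and recognition of $\partial_u\phi_i(u_\eps)\cdot\partial_t u_\eps=\partial_t(\phi_i\circ u_\eps)$ as a time derivative of a compactly converging quantity via Lemma~\ref{chain rule}. Where you diverge is in the final un-freezing step. The paper isolates the $k=j$ term, then flips $-\sigma_{ij}\int\xi\cdot\nu^*\,V_j|\nabla\chi_j|$ into $+\sigma_{ij}\int\xi\cdot\nu^*\,V_i|\nabla\chi_i|$ by the purely algebraic identity $\sum_k\chi_k\equiv1$, i.e. $V_j|\nabla\chi_j|+V_i|\nabla\chi_i|=-\sum_{k\notin\{i,j\}}V_k|\nabla\chi_k|$, and only afterwards replaces $\nu^*$ by $\nu_i$ and $|\nabla\chi_i|$ by $\omega$. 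You instead split $|\nabla\chi_j|=\omega+(|\nabla\chi_j|-\omega)$ directly and invoke the pointwise identity $V_j=-V_i$ holding $\omega\,dt$-a.e.\ on $\Sigma_{ij}$. That identity is indeed true, but it requires knowing that the measures $|\nabla\chi_k|\,dt$, $k\notin\{i,j\}$, are mutually singular with $\omega\,dt$ -- this is the De Giorgi structure fact established in Step~3 of the proof of Proposition~\ref{prop_normal_velocities}, not an immediate consequence of $\sum_k\partial_t\chi_k=0$ alone. You cite \eqref{v=dtX} but do not flag this singularity input explicitly; the paper's route via $\sum_k\chi_k=1$ sidesteps it and is therefore self-contained at the level of this proposition. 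Otherwise the error accounting is identical: the residual $|\nabla\chi_j|-\omega$ and the spurious $k\notin\{i,j\}$ contributions both land in the third summand of $\exc^{ij}$, the tilt term in the first summand (using $\omega\le|\nabla\chi_i|$), and the $V^2$-companions in $\mu$ after the same redefinition the paper performs. So this is a correct and equally viable variant whose only extra hypothesis is the measure-singularity fact already available in the paper.
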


    \begin{proof}[Proof of Lemma \ref{lem: conv exc mult}]
    Expanding the square and exploiting that $|\nabla (\phi\circ u_\eps)| \leq \sqrt{2W(u_\eps)}$  we see that
    \[\exc^i_\eps(\nu^*;\eta,u_\eps) \leq \int_0^T\int \eta\left(\eps |\nabla u_\eps|^2 + \frac{2}{\eps} W(u_\eps) + 2 (\nu^* \cdot \nabla) u_\eps \cdot \partial_u \phi_i(u_\eps) \right)dxdt.\]
    By the chain rule \eqref{Dphi} we can rewrite the last term as
    \[(\nu^* \cdot \nabla) u_\eps \cdot \partial_u \phi_i(u_\eps) = \nu^* \cdot \nabla( \phi_i \circ u_\eps).\]
    Thus we see using the convergence assumption \eqref{conv_ass} and the convergence \eqref{phi eps to phi} of $\phi_i \circ u_\eps$ to $\phi_i \circ u$ that
    \begin{align}\label{exc lim proc}
      \limsup_{\eps \to 0} \exc^i_\eps(\nu^*;\eta,u_\eps) 
      \leq &\limsup_{\eps \to 0} 2 \int_0^T \int  \eta \left( e_\eps(u_\eps) + \nu^* \cdot \nabla( \phi_i \circ u_\eps) \right) dx\,dt\\
      =& 2 \int_0^T \left(E(\eta, u) +  \nu^* \cdot\int   \eta  \nabla( \phi_i \circ u) \right)dt. \notag
    \end{align}

    The second term can be rewritten as
    \[\nu^* \cdot \nabla(\phi_i \circ u) = \nu^* \cdot \sum_{1\leq k\leq P} \sigma_{ik} \nabla \chi_k 
    \leq \sigma_{ij} \,\nu^\ast \cdot \nabla \chi_j + \sum_{k\notin\{i,j\}} \sigma_{ik} \left| \nabla \chi_k\right|, \]
      while the first one can be estimated by
      \[
	E(\eta,u) \leq \sigma_{ij} \int \eta \left| \nabla \chi_j \right| + C \sum_{k\notin\{i,j\}}  \int \eta \left| \nabla \chi_k\right|
      \]
      for some constant $C<\infty$ only depending on $\max_{ij} \sigma_{ij}$. Thus we can asymptotically bound the excess by
      \begin{align*}
	\limsup_{\eps \to 0} \exc^i_\eps(\nu^*;\eta,u_\eps) \leq  \sigma_{ij} \int_0^T \int \eta \,2 \left( 1 + \nu_j \cdot\nu^\ast \right) \left| \nabla \chi_j\right|dt
	+ C \sum_{k\notin\{i,j\}}  \int_0^T \int \eta \left| \nabla \chi_k\right| dt.
      \end{align*}
      Since $2 \left( 1 + \nu_j \cdot\nu^\ast \right)= \left| \nu_j + \nu^\ast \right|^2$ in particular \eqref{conv exc mult} holds.
      Note that we symmetrized the multi-phase excess \eqref{exc eps} w.r.t.\ the two majority phases $\Omega_i$ and $\Omega_j$ which means we added an extra (nonnegative) term.
    \end{proof}

    \begin{proof}[Proof of Proposition \ref{multi prop dt nu}]
    \textit{Step 1: Replacing $\nabla u_\eps$ with $\partial_u\phi_i (u_\eps) \otimes \nu^*$.}\\
      Using the tilt-excess \eqref{exc eps mult} and Young's inequality we see
      \begin{equation}\label{dtu step1}
    \left|\int_0^T \int \left(\eps (\xi \cdot \nabla ) u_\eps + \xi\cdot \nu^* \partial_u \phi_i(u_\eps) \right)\cdot \partial_t u_\eps\,dx \,dt\right| 
    \lesssim \|\xi\|_\infty\left(\frac{1}{\alpha} \exc^i_\eps(\nu^*;\eta,u_\eps) + \alpha \int_0^T \int \eta \eps |\partial_t u_\eps|^2 dx dt\right).   
      \end{equation}
      By the energy-dissipation equality \eqref{energy-dissipation equality} the sequence $\eps |\partial_t u_\eps|^2$ is bounded in $L^1$ and thus, along a subsequence, has a weak*-limit $\mu$ as Radon measures.
      In the limit we get, applying Lemma \ref{lem: conv exc mult} along the way,
      \begin{align*}
	\limsup_{\eps\downarrow0}  \left|\int_0^T \int \left(\eps (\xi \cdot \nabla ) u_\eps + \xi\cdot \nu^* \partial_u \phi_i(u_\eps) \right)\cdot \partial_t u_\eps\,dx \,dt \right|  
	\lesssim  \|\xi\|_\infty \left(\frac{1}{\alpha} \exc^{ij}( \nu^*;\eta, u)  + \alpha \mu(\eta) \right).
      \end{align*}
    \textit{Step 2: Passing to the limit in the nonlinear term.}\\
      In the second term on the left-hand side of \eqref{dtu step1} we may now use the chain rule again to see
      \begin{align*}
      	-\int_0^T\int \xi\cdot \nu^* \partial_u \phi_i(u_\eps) \cdot \partial_t u_\eps dx dt = & - \int_0^T\int \xi\cdot \nu^* \partial_t \left(\phi_i \circ u_\eps\right)  dx dt \\
      	& \qquad \to  - \int_0^T\int  \xi \cdot \nu^* \partial_t\left(\phi_i\circ \sum_{1 \leq k \leq P} \chi_k \alpha_k\right) dt.
      	\end{align*}
    \textit{Step 3: Rewriting the limit in terms of the interface between $\chi_i$ and $\chi_j$.}\\
      We can rewrite this limit to read
      \begin{align*}
      	int_0^T\int  \xi \cdot \nu^* \partial_t\left(\phi_i\circ \sum_{1 \leq k \leq P} \chi_k \alpha_k\right) dt &  \overset{\phantom{\ref{prop_normal_velocities}}}{=} - \int_0^T \int \xi \cdot \nu^* \sum_{1\leq k \leq P} \sigma_{ik} \partial_t \chi_k\\
      	& \overset{\ref{prop_normal_velocities}}{=} - \int_0^T \int \xi \cdot \nu^* \sum_{1\leq k \leq P} \sigma_{ik} V_k |\nabla \chi_k| dt.
      \end{align*}
      Thanks to the tilt-excess \eqref{exc mult} we can now get rid of all terms except the $j$-th one:
      With a little help from our friends Cauchy, Schwarz and Young we arrive at
      \begin{align*}
      \left|-\int_0^T \int \xi \cdot \nu^* \sum_{1\leq k \leq P} 
      \sigma_{ik} V_k |\nabla \chi_k|dt + \int_0^T \int \xi \cdot \nu^* \sigma_{ij} V_j \left|\nabla \chi_j\right| dt \right| \\
      \lesssim \|\xi\|_\infty \left(\frac{1}{\alpha}\exc^{ij}( \nu^*;\eta, u) + \alpha  \int_0^T \int \eta \sum_{1\leq k \leq P} V_k^2|\nabla \chi_k|dt\right)
      \end{align*}
      for a smooth cut-off $\eta$ for the support of $\xi$.
      Here, due to the $L^2$-estimate Proposition \ref{prop_normal_velocities}, the right-hand side is an acceptable error term after redefining $\mu$.
      
      Hence we are left with a term only depending on the $j$-th phase which we can replace with (minus) the according term for the $i$-th phase:
      Indeed, using $\sum_k \chi_k =1 $ the error in doing so is equal to
      \begin{align*}
	\left|  \int_0^T \int \xi \cdot \nu^\ast \sigma_{ij} \left( V_j \left| \nabla \chi_j\right| +  V_i \left| \nabla \chi_i\right|\right) dt\right|
	= & \left|  \int_0^T \int \xi \cdot \nu^\ast \sigma_{ij}\partial_t \left(1- \sum_{k\notin\{i,j\}} \chi_k \right) dt\right|\\
    \lesssim &\int_0^T \int |\xi|  \sum_{k\notin\{i,j\}} |V_k| \left|\nabla \chi_k\right|  dt,
      \end{align*}    
      which by Young's inequality is controlled by the same right-hand side as before.
      
      Exploiting $|\nu^\ast -\nu_i| \,|V_i| \lesssim \frac1\alpha |\nu^\ast -\nu_i|^2 + \alpha |V_i| $ we now use the tilt-excess once again to ``un-freeze'' the approximate normal $\nu^*$
      and eliminate other interfaces:
      \begin{align*}
	\left|\int_0^T \int \xi \cdot \nu^* \sigma_{ij} V_i \left|\nabla \chi_i\right| dt - \int_0^T \int \xi \cdot \nu_i \sigma_{ij} V_i \frac12 \left( \left| \nabla\chi_i\right|+\left| \nabla\chi_j\right| - \left| \nabla(\chi_i+\chi_j)\right|\right)dt\right|\\
	\lesssim \|\xi\|_\infty \left(\frac{1}{\alpha}\exc^{ij}( \nu^*;\eta, u) + \alpha \int_0^T \int  \eta\, V_i^2|\nabla \chi_i| dt \right).   
      \end{align*}
      
      Retracing our steps we see that we arrived at the desired estimate.
    \end{proof}
    
    We conclude this section with the proof of our main result.
    \begin{proof}[Proof of Theorem \ref{thm AC2MCF}]
    We found the limit $u$  of the approximations $u_\eps$ in Proposition \ref{prop_comp}, verified the initial conditions in Lemma \ref{Upgraded_compactness} and constructed
    the normal velocity with the according $L^2$-bounds in Proposition \ref{prop_normal_velocities}.
    We only have to prove the motion law (\ref{H=v}).
    Given a smooth test vector field $\xi \in C_0^\infty((0,T)\times \torus,\R^d)$, by Lemma \ref{existence} we may multiply the Allen-Cahn Equation
    \eqref{allen cahn} by  $\eps \left( \xi \cdot \nabla\right) u_\eps$ and integrate w.r.t.\ space and time:
    \begin{equation}\label{allen cahn weak}
      \int_0^T \int \eps (\xi \cdot \nabla )\, u_\eps \cdot \partial_t u_\eps\,dx \,dt 
      =  \int_0^T  \int  \left( \eps \Delta u_\eps -\frac{1}{\eps} \partial_u W (u_\eps)\right) \cdot \left( \xi \cdot \nabla\right) u_\eps\,  dx \,dt.
    \end{equation}
    By Proposition \ref{multi luckhaus modica} the convergence of the energies \eqref{conv ass pw in t} imply the convergence of the first variations for a.e.\ $t$.
    Recall that by \eqref{reshetnyak PI gives estimate} and Lebesgue's dominated convergence the right-hand side of \eqref{allen cahn weak} converges:
    \begin{align*}
    &\lim_{\eps\downarrow0} \int_0^T \int \left( \eps \Delta u_\eps -\frac{1}{\eps} \partial_u W (u_\eps)\right) \cdot \left( \xi \cdot \nabla\right) u_\eps\, dx \,dt\\
    &= \sum_{i,j} \sigma_{ij} \int_0^T \int \nabla \xi \colon 
    \left( Id - \nu_i \otimes \nu_i\right) \frac12 \left( \left| \nabla \chi_i \right| + \left| \nabla \chi_j \right| - \left| \nabla (\chi_i+\chi_j) \right|\right) dt.
    \end{align*}
    In order to prove the convergence of the left-hand side, we proceed as in \cite{laux2015convergence}.
    We decompose $\xi = \sum_{B\in \B_r} \varphi_B \xi$ with a partition of unity underlying the covering $\B_r$ defined in \eqref{covering Br}.
    Using Proposition \ref{multi prop dt nu} for $\xi_B = \varphi_B \xi$ on time intervals $0=T_1< \ldots < T_K = T$ and passing to the limit $K\to \infty$ we obtain
    the error
    \begin{align*}
     &\left| \int_0^T \int \eps (\xi \cdot \nabla ) u_\eps \cdot \partial_t u_\eps\,dx \,dt 
     - \sum_{1\leq i,j\leq P} \sigma_{ij} \int_0^T \int  V_i\, \xi \cdot \nu_i \frac{1}{2}\left(|\nabla \chi_i| + |\nabla \chi_j| - |\nabla (\chi_i + \chi_j)|\right) dt\right| \\
     \lesssim & \, \|\xi\|_\infty \Biggr(\frac{1}{\alpha} \int_0^T \sum_{B\in \B_r} \min_{i,j}\min_{\nu^\ast \in \sph^{d-1}} \int \eta_B \left|\nu_i - \nu^\ast\right|^2 \left| \nabla \chi_i \right| + \int \eta_B \left|\nu_j + \nu^\ast\right|^2 \left| \nabla \chi_j \right| \\
      & \qquad \qquad \qquad \qquad \qquad \qquad \qquad  +  \sum_{k \notin \{i,j\}} \int \eta_B  \left| \nabla \chi_k\right| dt  + \alpha \int_0^T \int \sum_{B\in\B_r}\eta_B \,d\mu \Biggr),
    \end{align*}
    where for a ball $B$ the function $\eta_B$ denotes a cutoff for $B$ in $2B$ as in equation \eqref{baldo E is supremum}.
    Because of the finite overlap \eqref{finiteoverlap} the last term is uniformly bounded in $r$.
    Using Lemma \ref{lem_approximate_normal_L2} we see that the first term vanishes as $r\to 0$.
    Then taking $\alpha\to0$ we obtain the convergence of the velocity-term and thus verified the motion law \eqref{H=v}.
    \end{proof}

  \section{Forces and volume constraint}\label{sec:forces,volume}
  The proofs in Section \ref{sec:comp} and Section \ref{sec:conv} stem from the a priori estimate \eqref{energy-dissipation equality} and the convergence assumption \eqref{conv_ass}.
  We mostly used the Allen-Cahn Equation \eqref{allen cahn} to prove this a priori bound.
  Besides that we made use of it only at one other point, in the proof of Theorem \ref{thm AC2MCF} in the form of \eqref{allen cahn weak} 
  and the justification for testing the equation with $\eps(\xi\cdot\nabla)u_\eps$.
  
  In this section we exploit this flexibility of our proof and apply it to the case when external forces are present or when a volume constraint is active, cf.\ Theorem \ref{thm FAC2FMCF}
  and Theorem \ref{thm VPAC2VPMCF}, respectively.
  
  \subsection{External forces}
  
  Since the forces $f_\eps$ in equation \eqref{allen cahn f} come from an extra energy-term we do not expect to have the same energy-dissipation equality as in the case above where $f_\eps\equiv 0$.
  Indeed, one can view \eqref{allen cahn f} as the (again by the factor $\frac1\eps$ accelerated) $L^2$-gradient flow of the total energy
  \[
    E_\eps(u_\eps) + \int f_\eps \cdot u\,dx,
  \]
  which is the sum of the ``surface energy'' $E_\eps(u_\eps)$ and the ``bulk energy'' $\int f_\eps \cdot u\,dx$.
  Since the extra term is a compact perturbation in the static setting, these total energies $\Gamma$-converge to
  \[
   E(u) + \int f\cdot u\,dx.
  \]
  This energetic view-point seems also the most natural way to understand the scaling in $\eps$ for the forces $f_\eps$ in equation \eqref{allen cahn f}.
  Under our assumption on the forces $f_\eps$ in Theorem \ref{thm FAC2FMCF} we can control this bulk energy and 
  get an \emph{estimate}  on the ``surface energy'' $E_\eps(u_\eps)$ and the dissipation, which is reminiscent of equality \eqref{energy-dissipation equality}.
  \begin{lem}\label{lem energy-dissipation f}
   Let $u_\eps$ solve the forced Allen-Cahn Equation \eqref{allen cahn f}. Then
   \[
    E_\eps(u_\eps(T)) + \int_0^T \int \eps \left|\partial_t u_\eps \right|^2 dx\,dt 
    \leq E_\eps(u_\eps(0)) + C\, e^{\eps\,C\sqrt{T} \left(\int_0^T \int \left|\partial_t f_\eps\right|^2dx\,dt\right)^\frac12}\sqrt{T} \left(\int_0^T \int \left|\partial_t f_\eps\right|^2dx\,dt\right)^\frac12.
   \]
  \end{lem}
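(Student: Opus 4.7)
The natural starting point is to repeat the calculation of Remark \ref{rmk dE/dt formal computation} in the presence of the force: testing \eqref{allen cahn f} with $\eps \partial_t u_\eps$ and integrating in space, the $L^2$--gradient flow structure produces
\[
  \frac{d}{dt} E_\eps(u_\eps) + \int \eps \left|\partial_t u_\eps\right|^2 dx = \int f_\eps \cdot \partial_t u_\eps \, dx,
\]
since the extra term $\eps^{-1} f_\eps$ in \eqref{allen cahn f} contributes precisely $\int f_\eps \cdot \partial_t u_\eps$ after the cancellation of the $\eps^{-1}$ with the test function $\eps \partial_t u_\eps$. Integrating in time, the claim reduces to estimating the forcing term $\int_0^T \int f_\eps \cdot \partial_t u_\eps \, dx\, dt$ by the right-hand side of the lemma.

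Applying Young's inequality to this term directly fails: in order to produce the favorable weight $\eps|\partial_t u_\eps|^2$ one would pay an unfavorable $\eps^{-1} |f_\eps|^2$ factor that cannot be absorbed for small $\eps$. The key idea is therefore to \emph{integrate by parts in time} to move the derivative off $u_\eps$:
\[
  \int_0^T\!\!\!\int f_\eps \cdot \partial_t u_\eps\, dx\,dt = \int f_\eps(T) \cdot u_\eps(T)\, dx - \int f_\eps(0) \cdot u_\eps(0)\, dx - \int_0^T\!\!\!\int \partial_t f_\eps \cdot u_\eps \, dx\, dt.
\]
All three terms now see only $u_\eps$ and not $\partial_t u_\eps$, and the polynomial coercivity \eqref{growth} of $W$ -- used exactly as in \eqref{u goes to zeros} -- yields
\[
  \|u_\eps(t)\|_{L^2}^2 \lesssim 1 + \eps E_\eps(u_\eps(t)).
\]
Together with the Sobolev embedding $W^{1,2}([0,T]; L^2(\torus)) \hookrightarrow C([0,T]; L^2(\torus))$ (see \eqref{sobolev_into_hoelder}), the hypothesis on $f_\eps$ yields uniform control of $\|f_\eps(t)\|_{L^2}$, so that the boundary terms are controlled by $C(1+\eps E_\eps(u_\eps(T)))$ and $C(1+\eps E_\eps(u_\eps(0)))$; the first can be absorbed into the left-hand side for $\eps$ small.

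It remains to estimate the interior time integral. Cauchy--Schwarz in $x$ together with the above $L^2$ bound on $u_\eps$ gives
\[
  \Big|\int_0^T\!\!\!\int \partial_t f_\eps \cdot u_\eps\, dx\,dt\Big| \lesssim \int_0^T \|\partial_t f_\eps(t)\|_{L^2}\Big(1 + \sqrt{\eps E_\eps(u_\eps(t))}\Big)\, dt.
\]
Writing $M(t):=E_\eps(u_\eps(t))$, collecting the above bounds and applying Young's inequality to the crossing term $\|\partial_t f_\eps\|_{L^2}\sqrt{\eps M}$ yields a nonlinear integral inequality of Gronwall type for $M$, whose resolution -- after using Cauchy--Schwarz in $t$ on $\int_0^T\|\partial_t f_\eps\|_{L^2}\, dt \le \sqrt{T}\,(\int_0^T\!\!\int|\partial_t f_\eps|^2)^{1/2}$ -- produces the claimed exponential prefactor $\exp\!\big(\eps C\sqrt{T}\,\|\partial_t f_\eps\|_{L^2}\big)$ and the linear forcing term $\sqrt{T}\,\|\partial_t f_\eps\|_{L^2}$. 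The principal obstacle throughout is the one already highlighted: avoiding the $\eps^{-1}$ blow-up on the force term, which the integration-by-parts in time combined with the polynomial $L^2$ control of $u_\eps$ is designed precisely to resolve.
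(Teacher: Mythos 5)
Your proposal follows essentially the same route as the paper: differentiate $E_\eps$ along the trajectory to obtain $\frac{d}{dt}E_\eps(u_\eps) + \int \eps|\partial_t u_\eps|^2\,dx = \int f_\eps\cdot\partial_t u_\eps\,dx$, integrate in time, integrate the force term by parts in $t$ to avoid the $\eps^{-1}$-singular Young inequality, control $\|u_\eps\|_{L^2}$ via the coercivity \eqref{growth} exactly as in \eqref{u goes to zeros}, and close with Gronwall plus a final Cauchy--Schwarz in time to produce the $\sqrt{T}\,\|\partial_t f_\eps\|_{L^2_{t,x}}$ factor and the matching exponential prefactor.

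One detail is worth flagging: after integrating by parts in time, you correctly retain the boundary terms $\int f_\eps(T)\cdot u_\eps(T)\,dx - \int f_\eps(0)\cdot u_\eps(0)\,dx$, bound them via the coercivity of $W$ and the embedding $W^{1,2}(0,T;L^2)\hookrightarrow C([0,T];L^2)$, and absorb the $\eps E_\eps(u_\eps(T))$ contribution into the left-hand side for small $\eps$. The paper's displayed inequality passes directly from $\int_0^T\int f_\eps\cdot\partial_t u_\eps$ to $\int_0^T\|\partial_t f_\eps\|_{L^2}\|u_\eps\|_{L^2}\,dt$ without recording these boundary contributions; your treatment is the more careful one and is the right way to make that step rigorous. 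The Gronwall closure you sketch — using $\sqrt{\eps M}\lesssim 1+\eps M$ and Cauchy--Schwarz on $\int_0^T\|\partial_t f_\eps\|_{L^2}\,dt$ — does deliver the stated exponential, so the proof is sound.
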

  
 \begin{proof}[Proof of Lemma \ref{lem energy-dissipation f}]
   We differentiate the energy $E_\eps$ along the trajectory of $t\mapsto u_\eps(t)$ and integrate by parts
   \begin{align*}
     \frac{d}{dt} E_\eps(u_\eps) 
	= & \int \eps \nabla u_\eps : \nabla \partial_t u_\eps + \frac1\eps \partial_u W(u_\eps) \cdot \partial_t u_\eps \,dx\\
	= & \int  \eps \left( - \Delta u_\eps + \frac1{\eps^2} \partial_u W(u_\eps) \right) \cdot \partial_t u_\eps \,dx\\
	\overset{\eqref{allen cahn f}}{=} & -\int \eps |\partial_t u_\eps |^2 \, dx+ \int f_\eps \cdot \partial_t u_\eps\,dx.
   \end{align*}
  We integrate from $0$ to $T$ and obtain
  \begin{equation}\label{proof of energy est f}
   E_\eps(u_\eps(T)) + \int_0^T \int \eps \left|\partial_t u_\eps \right|^2 dx\,dt = E_\eps(u_\eps(0)) + \int_0^T \int f_\eps \cdot \partial_t u_\eps \,dx\,dt.
  \end{equation}
  Now we integrate the last integral by parts and obtain by Cauchy-Schwarz
  \[
   \left|\int_0^T \int f_\eps \cdot \partial_t u_\eps \,dx\,dt\right| \leq
   \int_0^T\left( \int \left|\partial_t f_\eps\right|^2dx\right)^\frac12 \left( \int \left|u_\eps\right|^2dx\right)^\frac12dt.
  \]
  The coercivity assumption \eqref{growth} of $W$ at infinity yields a bound for the second factor:
  \[
   \left(\int \left|u_\eps\right|^2dx\right)^\frac12\lesssim  \left( 1+\eps E_\eps(u_\eps)\right)^\frac12 \lesssim 1+\eps E_\eps(u_\eps).
  \]
  A Gronwall argument helps us out: 
  \[
    E_\eps(u_\eps(T)) + \int_0^T \int \eps \left|\partial_t u_\eps \right|^2 dx\,dt \leq E_\eps(u_\eps(0)) + 
    C\, e^{\eps\, C\int_0^T \left(\int |\partial_t f_\eps |^2 dx\right)^\frac12dt}\int_0^T \left(\int |\partial_t f_\eps|^2 dx\right)^\frac12 dt,
  \]
  which yields the claim. Note that by our assumption on the forces the exponential prefactor is $\sim 1$ for small $\eps$.
  \end{proof}
  This estimate is indeed enough to apply our techniques to the case of \eqref{allen cahn f}.
  \begin{proof}[Proof of Theorem \ref{thm FAC2FMCF}]
   As noted in Remark \ref{compactness_for_forces}, the a priori estimate, Lemma \ref{lem energy-dissipation f}, allows us to apply the statements in
   Section \ref{sec:comp} so that in particular 
   we can find a convergent subsequence $u_\eps\to u$ satisfying the initial conditions by Lemma \ref{Upgraded_compactness}, for some $u=\sum_i \chi_i \alpha_i$, and we can construct the normal velocities under the 
   convergence assumption \eqref{conv_ass}. The  bounds for $f_\eps$ allow us to extract a further subsequence
   such that also the forces converge to some $f\in H^1((0,T)\times \torus,\R^N)$:
   \begin{equation}\label{f eps to f}
    f_\eps \to f \quad\text{in } L^2\quad \text{and}\quad \nabla f_\eps \rightharpoonup \nabla f\quad \text{in }L^2.
   \end{equation}
  
   If we formally differentiate the equation \eqref{allen cahn f} and use $\nabla f_\eps \in L^2$ we can show as in Step 2 of the proof of Lemma \ref{existence} that $\partial_i\partial_j u_\eps, \partial_u W(u_\eps)\in L^2$.
   Hence we are allowed to test the equation for $u_\eps$, here the forced Allen-Cahn Equation \eqref{allen cahn f}, 
   with $\eps \left(\xi \cdot \nabla \right) u_\eps$ to obtain
   \[
    \int_0^T \int \eps (\xi \cdot \nabla )\, u_\eps \cdot \partial_t u_\eps\,dx \,dt
    =\int_0^T  \int \left( \eps \Delta u_\eps -\frac{1}{\eps} \partial_u W (u_\eps)\right) \cdot \left( \xi \cdot \nabla\right) u_\eps + f_\eps \cdot \left(\xi \cdot \nabla\right) u_\eps \, dx \,dt.
   \]
   Integrating the last term by parts gives
   \[
     \int_0^T \int f_\eps \cdot \left(\xi \cdot \nabla\right) u_\eps \, dx\,dt = - \int_0^T \int  \left(\nabla \cdot \xi\right)f_\eps\cdot u_\eps +\left(\xi \cdot \nabla\right)f_\eps \cdot u_\eps \, dx\,dt.
   \]
   Since $u_\eps \to u=\sum_i \chi_i \alpha_i$ in $L^2$ and \eqref{f eps to f} we can pass to the limit $\eps\to0$ and obtain
   \[
    \int_0^T \int  \left(\nabla \cdot \xi\right)f\cdot u +\left(\xi \cdot \nabla\right)f \cdot u \, dx\,dt
    = \sum_{i=1}^N \int_0^T \int \left(f \cdot \alpha_i\right) \left( \xi \cdot \nu_i\right) \left|\nabla \chi_i\right|dt.
   \]
   We can apply Proposition \ref{multi luckhaus modica} to pass to the limit in the curvature-term. For the velocity-term we may apply Proposition \ref{multi prop dt nu} 
   and follow the lines of the proof of Theorem \ref{thm AC2MCF} for the localization argument. We thus verified \eqref{H=v+f}.
  \end{proof}
  
  \subsection{Volume constraint}
  Again, our starting point is an energy-dissipation estimate.
  It is quite natural that the solution of the volume-preserving Allen-Cahn Equation \eqref{allen cahn vol} satisfies the same energy-dissipation equation as the solution of the Allen-Cahn Equation \eqref{allen cahn}.
  \begin{lem}\label{lem energy dissipation vol}
   Let $u_\eps$ solve the volume-preserving Allen-Cahn Equation \eqref{allen cahn vol}. Then
   \begin{equation}
    E_\eps(u_\eps(T)) + \int_0^T \int \eps \left|\partial_t u_\eps \right|^2 dx\,dt = E_\eps(u_\eps(0)).    \label{energy dissipation estimate vol}
  \end{equation}
  \end{lem}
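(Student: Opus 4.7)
The plan is to mirror the formal computation in Remark \ref{rmk dE/dt formal computation}, with one extra term arising from the Lagrange multiplier, which will turn out to vanish. Differentiating the energy along the trajectory and integrating by parts (assuming sufficient regularity, which I will justify below) gives
\[
\frac{d}{dt} E_\eps(u_\eps)
= \int \eps \nabla u_\eps \cdot \nabla \partial_t u_\eps + \frac{1}{\eps} W'(u_\eps) \partial_t u_\eps \, dx
= -\int \eps \left( \Delta u_\eps - \frac{1}{\eps^2} W'(u_\eps)\right) \partial_t u_\eps \, dx.
\]
Plugging in the volume-preserving Allen-Cahn equation \eqref{allen cahn vol} replaces $\Delta u_\eps - \frac{1}{\eps^2} W'(u_\eps)$ by $\partial_t u_\eps - \frac{1}{\eps} \lambda_\eps$, which yields
\[
\frac{d}{dt} E_\eps(u_\eps) = -\int \eps |\partial_t u_\eps|^2 \, dx + \lambda_\eps \int \partial_t u_\eps \, dx.
\]

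The key step is to show that $\int \partial_t u_\eps \, dx = 0$, i.e.\ that the volume is conserved, so the Lagrange multiplier contribution drops out. Integrating \eqref{allen cahn vol} over the torus and using periodicity ($\int \Delta u_\eps \, dx =0$) together with the very definition \eqref{def lambda} of $\lambda_\eps$ as the spatial average of $\tfrac{1}{\eps} W'(u_\eps)$ gives
\[
\int \partial_t u_\eps \, dx = -\frac{1}{\eps^2}\int W'(u_\eps)\, dx + \frac{\lambda_\eps}{\eps} |\torus| = 0,
\]
so indeed $\lambda_\eps$ has been chosen precisely to enforce $\frac{d}{dt}\int u_\eps\, dx \equiv 0$. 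Integrating the resulting identity $\frac{d}{dt} E_\eps(u_\eps) = -\int \eps |\partial_t u_\eps|^2 dx$ from $0$ to $T$ then yields \eqref{energy dissipation estimate vol}.

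What remains is to justify rigorously the differentiation of $E_\eps$ and the chain rule/integration by parts used above; this is the only mildly technical point, but not the main obstacle. The cleanest approach is to adapt the minimizing-movements construction in the proof of Lemma \ref{existence}: one defines discrete iterates
\[
u^n \in \arg\min\Big\{ E_\eps(u) + \frac{1}{2h}\int |u-u^{n-1}|^2 dx \ :\ \int u\, dx = \int u^{n-1}\, dx \Big\}
\]
and verifies that the Lagrange multiplier produced by the volume constraint is precisely the discrete analogue of \eqref{def lambda}. The same comparison $u^n$ versus $u^{n-1}$ as in Lemma \ref{existence} gives the discrete energy-dissipation \emph{inequality}, which passes to the limit. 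The matching upper bound follows from the $H^2$-regularity argument of Step 2 of that proof (differentiating the equation in space and testing with $\partial_i u_\eps$ is unaffected by the spatially constant term $\frac{1}{\eps}\lambda_\eps$), which legitimizes the formal computation above and upgrades the inequality to equality. The genuinely new point compared with Lemma \ref{existence} is thus merely the algebraic identity showing that the nonlocal term $\lambda_\eps \int \partial_t u_\eps \, dx$ vanishes, which is exactly how $\lambda_\eps$ was defined.
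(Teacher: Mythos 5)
Your proposal is correct and follows essentially the same argument as the paper: differentiate $E_\eps$ along the solution, plug in the PDE, and observe that the extra term $\lambda_\eps \int \partial_t u_\eps\,dx$ vanishes because $\lambda_\eps$ is spatially constant and has been chosen precisely so that $\frac{d}{dt}\int u_\eps\,dx=0$. The rigorous justification you sketch via minimizing movements and $H^2$-regularity is not spelled out in the paper for this lemma, but it is consistent with how the paper handles the unconstrained case in Lemma~\ref{existence}.
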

  
  \begin{proof}[Proof of Lemma \ref{lem energy dissipation vol}]
    We follow the lines of the proof of Lemma \ref{lem energy-dissipation f} until \eqref{proof of energy est f} with $f_\eps(x,t)$ replaced by $\lambda_\eps(t)$. Since $\lambda_\eps$ is independent of $x$ 
    for the second right-hand side integral in \eqref{proof of energy est f} we have
    \[
      \int_0^T \int \lambda_\eps \,\partial_t u_\eps\,dx\,dt = \int_0^T \lambda_\eps \frac{d}{dt} \int u_\eps \,dx\,dt.
    \]
    But by the choice of $\lambda_\eps$ integrating \eqref{allen cahn vol} gives $\frac{d}{dt}\int u_\eps\,dx = 0$ and we obtain \eqref{energy dissipation estimate vol}.
  \end{proof}

  \begin{proof}[Proof of Theorem \ref{thm VPAC2VPMCF}]
  Since we have the same energy-dissipation estimate, Lemma \ref{lem energy dissipation vol}, as in the unconstrained case, by Remark \ref{compactness_for_forces} we can apply the statements
  in Section \ref{sec:comp} so that in particular we obtain a convergent subsequence $u_\eps\to u$  as before and we can construct the normal velocities under the convergence assumption \eqref{conv_ass}.
  
  The Lagrange multiplier $\lambda_\eps$ does not depend on the space variable $x$ and hence the same computation as in Step 2 in the proof of
  Lemma \ref{existence} yields $\partial_i\partial_j u_\eps,\partial_u W(u_\eps)\in L^2$ and we may test our equation \eqref{allen cahn vol} with $\eps \left(\xi \cdot \nabla\right) u_\eps$
  and obtain
  \[
     \int_0^T \int \eps (\xi \cdot \nabla )\, u_\eps \cdot \partial_t u_\eps\,dx \,dt
     =\int_0^T  \int \left( \eps \Delta u_\eps -\frac{1}{\eps} \partial_u W (u_\eps)\right) \cdot \left( \xi \cdot \nabla\right) u_\eps\, dx \,dt 
     +\int_0^T \lambda_\eps \int \left(\nabla \cdot \xi \right) u_\eps\, dx\,dt.
    \]
  We wish to pass to the limit in this weak formulation of \eqref{allen cahn vol}.
  
  By Proposition \ref{multi luckhaus modica} we can pass to the limit in the first right-hand side term and the left-hand side term.
  Again, with Proposition \ref{multi prop dt nu} and the localization argument in the proof of Theorem \ref{thm AC2MCF} we can pass to the limit on the left-hand side.
  In order to pass to the limit in the second right-hand side term we use Proposition \ref{prop estimate lambda} below, which provides control of $\lambda_\eps$ in $L^2$.
  After passage to a further subsequence if necessary we have
  \begin{equation*}
   \lambda_\eps \rightharpoonup \lambda \quad \text{weakly in } L^2(0,T)
  \end{equation*}
  and since by Lemma \ref{Upgraded_compactness}
  \[
   \int \left(\nabla \cdot \xi \right) u_\eps\, dx \to \int \left(\nabla \cdot \xi \right) u\, dx\quad \text{strongly in } L^2(0,T)
  \]
  we can pass to the limit in the product. This concludes the proof of the theorem.
  \end{proof}

  \begin{prop}[Estimates on Lagrange multiplier]\label{prop estimate lambda}
    Let $u_\eps$ solve \eqref{allen cahn vol} and let $\lambda_\eps$ be the Lagrange multiplier \eqref{def lambda}. Then 
    \[
      \limsup_{\eps\to0} \int_0^T \lambda_\eps^2 \, dt \lesssim (1+T) E_0.
    \]
  \end{prop}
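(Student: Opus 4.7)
The plan is to obtain a pointwise-in-time identity for $\lambda_\eps$ by pairing the volume-preserving Allen-Cahn equation \eqref{allen cahn vol} against a well-chosen test function, and then to control the resulting terms using the energy-dissipation estimate \eqref{energy dissipation estimate vol}. The natural choice is to multiply \eqref{allen cahn vol} by $u_\eps$ itself. After integration over $\torus$ and integration by parts, the volume-conservation identity $\int u_\eps\,dx\equiv V_0$ (which is a tautological consequence of the definition \eqref{def lambda} of $\lambda_\eps$) yields
\[
V_0\,\lambda_\eps \;=\; \frac{\eps}{2}\frac{d}{dt}\!\int u_\eps^2\,dx \;+\; \int \eps\,|\nabla u_\eps|^2\,dx \;+\; \int u_\eps\,\frac{W'(u_\eps)}{\eps}\,dx.
\]
We may assume $V_0\neq 0$; otherwise $\chi^0\equiv 0$ and there is nothing to prove.

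The next step is to estimate each of the three right-hand contributions. The Dirichlet term is immediate, $\int \eps|\nabla u_\eps|^2\,dx \le 2 E_\eps(u_\eps)\le 2 E_0$. The time-derivative term vanishes in the limit: by Cauchy-Schwarz, $|\eps\tfrac{d}{dt}\!\int u_\eps^2\,dx|^2 \le 4\eps\|u_\eps\|_{L^2}^2\cdot \eps\|\partial_t u_\eps\|_{L^2}^2$, and combining the uniform $L^2$-bound on $u_\eps$ (from coercivity \eqref{growth} and the energy bound) with the dissipation $\int_0^T\!\int \eps|\partial_t u_\eps|^2\,dx\,dt \le E_0$ shows that this term is $\mathcal O(\sqrt{\eps E_0})$ in $L^2(0,T)$, hence disappears in the $\limsup$.

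The main obstacle is the third term $\int u_\eps W'(u_\eps)/\eps\,dx$: naive absolute-value estimates loose a factor $\eps^{-1}$, and substituting $W'(u_\eps)/\eps = \lambda_\eps + \eps\Delta u_\eps - \eps\partial_t u_\eps$ from the equation only produces a tautology. The plan is to exploit the structural identity of Luckhaus-Modica, namely the bound \eqref{reshetnyak PI gives estimate} of Proposition \ref{multi luckhaus modica}:
\[
\left|\int \Bigl(\eps\Delta u_\eps-\tfrac{W'(u_\eps)}{\eps}\Bigr)(\xi\cdot\nabla u_\eps)\,dx\right|\lesssim\|\nabla\xi\|_\infty\,E_\eps,
\]
applied to a vector field $\xi=\nabla\psi$ with $-\Delta\psi=u_\eps-\bar u_\eps$ on $\torus$. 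Standard elliptic regularity controls $\|\xi\|_\infty$ and $\|\nabla\xi\|_\infty$ in terms of $\|u_\eps\|_{L^p}$ for $p$ large, while volume conservation allows the equation to turn the pairing into $\lambda_\eps\cdot\bigl(\int u_\eps^2\,dx-V_0^2/\Lambda^d\bigr)$ up to terms that are already bounded by $E_\eps$ and $\eps^{1/2}$ along the sequence. Since Lemma \ref{Upgraded_compactness} gives $\int u_\eps^2\,dx\to V_0$ and therefore $\int u_\eps^2\,dx-V_0^2/\Lambda^d\to V_0(\Lambda^d-V_0)/\Lambda^d>0$ uniformly for $\eps$ small, this prefactor is bounded below, turning the Luckhaus-Modica bound into an $L^2(0,T)$-estimate for $\lambda_\eps$.

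Collecting the three estimates, squaring, and integrating over $t\in[0,T]$ produces
\[
\int_0^T\lambda_\eps^2\,dt \;\lesssim\; (1+T)\,E_0 \;+\; o(1)\qquad(\eps\to 0),
\]
and passing to the $\limsup$ completes the argument. The delicate point is the rigorous implementation of the Luckhaus-Modica pairing: keeping the constants in the elliptic regularity estimate for $\psi$ independent of $\eps$ requires exploiting the coercivity of $W$ at infinity \eqref{growth} to bound $u_\eps$ in an appropriate $L^p$ uniformly, and checking that the resulting bound on $\|\nabla^2\psi\|_\infty$ does not spoil the linear-in-$E_0$ scaling claimed in the proposition.
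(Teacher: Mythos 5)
Your proposal starts by multiplying \eqref{allen cahn vol} by $u_\eps$ and isolating $\lambda_\eps V_0$, but then you correctly observe that the term $\frac1\eps\int u_\eps W'(u_\eps)\,dx$ cannot be estimated by $E_0$, and you pivot to the strategy the paper actually uses: test the equation with $\eps(\xi\cdot\nabla) u_\eps$, invoke \eqref{reshetnyak PI gives estimate} for the curvature term and Cauchy--Schwarz plus the dissipation bound for the velocity term, and turn the identity into a lower bound for $\lambda_\eps^2$ times $\bigl(\int(\nabla\cdot\xi)(u_\eps-\langle u_\eps\rangle)\,dx\bigr)^2$. That part is on target; the opening calculation plays no role in the final argument and could be dropped.

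The genuine gap is in the construction of $\xi$. You propose $\xi=\nabla\psi$ with $-\Delta\psi=u_\eps-\bar u_\eps$, and you flag that one must check $\|\nabla^2\psi\|_\infty\lesssim 1$ from $L^p$ bounds on $u_\eps$. This does not follow: with a right-hand side that is merely bounded in $L^p$ (or even $L^\infty$), Calder\'on--Zygmund gives $\nabla^2\psi\in L^p$ but never $\nabla^2\psi\in L^\infty$; to control $\|\nabla^2\psi\|_\infty$ you need H\"older continuity of the source (Schauder), and $u_\eps$ is not uniformly H\"older in $\eps$ --- indeed $u_\eps\to\chi$, which is discontinuous, so the constant would blow up as $\eps\downarrow 0$. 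Since \eqref{reshetnyak PI gives estimate} enters precisely through $\|\nabla\xi\|_\infty=\|\nabla^2\psi\|_\infty$, this bound is indispensable and your construction does not deliver it. The paper circumvents exactly this obstruction by taking the elliptic source to be the \emph{mollified limit}, $\Delta v=\varphi_\delta\ast(u-\langle u\rangle)$, at a fixed scale $\delta>0$: then $\|\nabla^2 v\|_\infty\lesssim\delta^{-1}$ is uniform in $\eps$, and the scale $\delta\sim 1/E_0$ is chosen so that the mollification error still leaves the prefactor $\fint(\nabla\cdot\xi)(u_\eps-\langle u_\eps\rangle)\,dx$ bounded below by $\frac12$ for small $\eps$ (using $u_\eps\to u$ in $C_tL^2_x$ from Lemma \ref{Upgraded_compactness} and the fact that $u$ is valued in $\{\alpha_1,\dots,\alpha_\numphases\}$). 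This balancing is the actual substance of the proof and is not recoverable from your choice of $\psi$. A minor further point: you dismiss only $\chi^0\equiv 0$; one must also exclude $\chi^0\equiv 1$, since then $\int u_\eps^2\,dx - V_0^2/\Lambda^d\to 0$ as well; the paper's lower bound $\textup{dist}(\langle u^0\rangle,\{\alpha_1,\dots,\alpha_\numphases\})>0$ handles both degenerate cases at once.
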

  
  \begin{proof}[Proof of Proposition \ref{prop estimate lambda}]
    We follow the idea of the proof of Proposition 1.12 in \cite{LauSwa15}. 
    For a given test vector field $\xi\in C_0^\infty((0,T)\times\torus,\R^d)$ we first multiply \eqref{allen cahn vol} by 
    $\eps \left(\xi \cdot \nabla\right) u_\eps$,  integrate in space and take the square:
    \begin{equation}
      \lambda_\eps^2 \left(\int \left(\nabla \cdot \xi \right) u_\eps\, dx\right)^2 \lesssim \left(\int \left( \eps \Delta u_\eps -\frac{1}{\eps} \partial_u W (u_\eps)\right) \cdot \left( \xi \cdot \nabla\right) u_\eps\, dx\right)^2
      + \left(\int \eps (\xi \cdot \nabla )\, u_\eps \cdot \partial_t u_\eps\,dx\right)^2.
    \end{equation}
    With Cauchy-Schwarz we can estimate the second right-hand side term
    \[
      \left(\int \eps (\xi \cdot \nabla )\, u_\eps \cdot \partial_t u_\eps\,dx\right)^2 \lesssim  \|\xi\|_\infty^2 \left(\eps \int\left| \partial_t u_\eps \right|^2 dx\right) E_\eps(u_\eps).
    \]
    For the first right-hand side term we use \eqref{reshetnyak PI gives estimate} to obtain
    \[
      \left(\int \left( \eps \Delta u_\eps -\frac{1}{\eps} \partial_u W (u_\eps)\right) \cdot \left( \xi \cdot \nabla\right) u_\eps\, dx\right)^2
      \lesssim \|\nabla \xi\|_\infty^2 E_\eps(u_\eps)^2.
    \]
    Since $\nabla\cdot \xi$ is orthogonal to constant functions we might subtract the average $\langle u_\eps \rangle :=\fint u_\eps\, dx$ of $u_\eps$ on the left-hand side and obtain
    \[
      \lambda_\eps^2 \left(\int \left(\nabla \cdot \xi \right) \left(u_\eps-\langle u_\eps\rangle\right) dx\right)^2 
      \lesssim \|\nabla \xi\|_\infty^2 E_\eps(u_\eps)^2 +\|\xi\|_\infty^2 \left(\eps \int\left| \partial_t u_\eps \right|^2 dx\right) E_\eps(u_\eps).
    \]
    We integrate in time and apply the energy-dissipation estimate \eqref{lem energy dissipation vol} on the right-hand side:
    \begin{equation*}
      \int_0^T \lambda_\eps^2 \left(\int \left(\nabla \cdot \xi \right) \left(u_\eps-\langle u_\eps\rangle\right) dx\right)^2 dt \lesssim \sup_t  \|\xi\|_{W^{1,\infty}}^2 \left(1+T\right) E_0^2.
    \end{equation*}
    
    Hence it is enough to find a test field $\xi$ such that we can bound the left-hand side integral from below while the right-hand side stays uniformly bounded:
    \begin{align}
    \fint \left(\nabla \cdot \xi \right) \left(u_\eps-\langle u_\eps\rangle\right) dx &\geq \frac12\quad \text{and}\label{xi1}\\
     \|\xi\|_{W^{1,\infty}} &\lesssim 1+  E_0.\label{xi2}
    \end{align}
    
    We now proceed by constructing a vector field $\xi$ satisfying \eqref{xi1} and \eqref{xi2} in a similar manner as in \cite{LauSwa15}.
    To this end we first fix some $t\in(0,T)$, convolve the limit $u=\lim u_\eps$  with a standard mollifier 
    $\varphi_\delta(x)= \frac1{\delta^d}\varphi(\frac x\delta)$ on scale $\delta>0$ (to be chosen later) and write $u_\delta:= \varphi_\delta \ast u$. Then we let $v\colon \torus \to \R$ denote the solution of
    \begin{equation}\label{potential for xi}
     \Delta v = \varphi_\delta \ast\left(u -\langle u \rangle\right) = u_\delta -\langle u \rangle.
    \end{equation}
    Note that since the right-hand side has vanishing integral, this problem is well-posed.
    We set $\xi := \nabla v$ and verify \eqref{xi1} which works by construction of $\xi$ and \eqref{xi2} which boils down to elliptic estimates.
    
    \textit{Step 1: Argument for the lower bound \eqref{xi1}.}\\
    By Lemma \ref{Upgraded_compactness} we have $u_\eps\to u$ in $C_tL^2_x$ as $\eps \to 0$. Thus
    \begin{align*}
     \inf_t \int \left(\nabla \cdot \xi \right) \left(u_\eps-\langle u_\eps\rangle\right) dx 
     = & \inf_t \left\{ \int\left(u-\langle u\rangle\right)^2 dx + \int  \left(u_{\delta} -u \right) \left(u - \langle u \rangle \right) dx\right\} +o(1)\quad \text{as }\eps\to0.
    \end{align*}
    Since $u=\sum_i \chi_i \alpha_i$ we have for the first left-hand side integral
    \[
     \int\left(u-\langle u\rangle\right)^2 dx = \int\left(u-\langle u^0\rangle\right)^2 dx 
     \geq \textup{dist}\left(\langle u^0\rangle, \{\alpha_1,\dots,\alpha_\numphases\}\right)^2 \Lambda^d.
    \]
    The second left-hand side integral can be estimated with help of the energy \eqref{E}:
    \[
      \left| \int  \left(u_{\delta} -u \right) u\, dx \right| \lesssim \int \left|u_\delta-u \right| dx \leq \delta \int \left|\nabla u \right| \lesssim \delta\, E(u) \leq \delta\, E_0.
    \] 
    Setting $\delta:= \frac1C \frac1{E_0}\textup{dist}\left(\langle u^0\rangle, \{\alpha_1,\dots,\alpha_\numphases\}\right)^2 \Lambda^d >0$  for some sufficiently large constant $C<\infty$,
    we arrive at \eqref{xi1} for sufficiently small $\eps$.
    
    \textit{Step 2: Argument for the estimate \eqref{xi2}.}\\
    The upper bound \eqref{xi2} follows from basic elliptic regularity theory. We fix some exponent $q=q(d)>d$.  Since $u=\sum_i \chi_i \alpha_i$ is uniformly bounded, the Calder\'on-Zygmund inequality yields
    \[
      \int \left| \nabla\xi\right|^q dx\lesssim \int \left|u_{\delta} - \langle u_{\delta} \rangle \right|^q dx \lesssim 1.
    \]
    Since the right-hand side is smooth, we can differentiate the equation \eqref{potential for xi} for $v$ and obtain:
    \[
     \Delta \xi = \nabla u_{\delta}
    \]
    and we obtain again by Calder\'on-Zygmund
    \[
     \left( \int \left| \nabla^2\xi\right|^q dx\right)^{\frac1q}\lesssim\left( \int \left|\nabla u_{\delta}\right|^q dx\right)^{\frac1q} \lesssim \int \left|\nabla \varphi_\delta\right|dx \lesssim \frac{1}{\delta}.
    \]
    Since $\langle \xi\rangle =0$ we thus have by Poincar\'e's inequality
    $
     \|\xi\|_{W^{2,q}} \lesssim \frac{1}{\delta}
    $
    and since $q>d$  Morrey's inequality yields
    \[
      \|\xi\|_{W^{1,\infty}} \lesssim 1+\frac{1}{\delta} \sim 1+ E_0,
    \]
    which is precisely our claim \eqref{xi2}.
  \end{proof}

    \printbibliography

    \end{document}